\newtheorem{thm}{Theorem}[section]
\newtheorem{defn}[thm]{Definition}
\newtheorem{corollary}[thm]{Corollary}
\newtheorem{lemma}[thm]{Lemma}
\newtheorem{example}[thm]{Example}
\newtheorem{assumption}[thm]{Assumption}
\theoremstyle{remark}
\newtheorem{remark}[thm]{Remark}
\def\qed{{\hfill $\Box$ \bigskip}}
\def\XXint#1#2#3{{\setbox0=\hbox{$#1{#2#3}{\int}$}
\vcenter{\hbox{$#2#3$}}\kern-.5\wd0}}
\newcommand\aint{-\hspace{-0.38cm}\int}
\newcommand\cbrk{\text{$]$\kern-.15em$]$}}
\newcommand\opar{\text{\,\raise.2ex\hbox{${\scriptstyle
|}$}\kern-.34em$($}}
\newcommand\cpar{\text{$)$\kern-.34em\raise.2ex\hbox{${\scriptstyle |}$}}\,}
\def\<{\langle}
\def\>{\rangle}
\def\E{{\mathbb E}}
\newcommand\bL{\mathbb{L}}
\newcommand\bR{\mathbb{R}}
\newcommand\bP{\mathbb{P}}
\newcommand\bQ{\mathbb{Q}}
\newcommand\bS{\mathbb{S}}
\newcommand\bM{\mathbb{M}}
\newcommand\bE{\mathbb{E}}
\newcommand\bN{\mathbb{N}}
\newcommand\bZ{\mathbb{Z}}
\newcommand\fB{\mathbf{B}}
\newcommand\fR{\mathbf{R}}
\newcommand\cA{\mathcal{A}}
\newcommand\cF{\mathcal{F}}
\newcommand\cH{\mathcal{H}}
\newcommand\cI{\mathcal{I}}
\newcommand\cL{\mathcal{L}}
\newcommand\cP{\mathcal{P}}
\newcommand\cM{\mathcal{M}}
\newcommand\cT{\mathcal{T}}
\newcommand\cO{\mathcal{O}}
\newcommand\rF{\mathscr{F}}
\newcommand\rP{\mathscr{P}}
\def\R {{\mathbb R}}
\newcommand{\mysection}[1]{\section{#1}
\setcounter{equation}{0}}
\begin{document}

\title[Pure Jump Diffusion Equations]
{An $L_p$-theory for diffusion equations related to stochastic processes with non-stationary independent increment}

\author{Ildoo Kim}
\address{Center for Mathematical Challenges, Korea Institute for Advanced Study, 85 Hoegiro Dongdaemun-gu,
Seoul 130-722, Republic of Korea} \email{waldoo@kias.re.kr}

\author{Kyeong-Hun Kim}
\address{Department of Mathematics, Korea University, 1 Anam-dong,
Sungbuk-gu, Seoul, 136-701, Republic of Korea}
\email{kyeonghun@korea.ac.kr}
\thanks{The second and third authors were supported by Samsung Science  and Technology Foundation under Project Number SSTF-BA1401-02}

\author{Panki Kim}
\address{Department of Mathematical Sciences and Research Institute of Mathematics,
Seoul National University,
Building 27, 1 Gwanak-ro, Gwanak-gu
Seoul 151-747, Republic of Korea.}
\email{pkim@snu.ac.kr}

\subjclass[2010]{60J60, 60G51, 35S10}

\keywords{Diffusion equation for jump process, Non-stationary increment,  $L_p$-theory,  Pseudo-differential operator}

\maketitle

\begin{abstract}
Let $X=(X_t)_{t \ge 0}$ be a stochastic process which has 
an (not necessarily stationary) independent 
 increment on a probability space $(\Omega, \mathbb{P})$. 
In this paper, we study the following Cauchy problem related to the stochastic process $X$: 
\begin{align}						
					\label{ab main}
\frac{\partial u}{\partial t}(t,x) = \cA(t)u(t,x) +f(t,x), \quad u(0,\cdot)=0, \quad (t,x) \in (0,T) \times \mathbf{R}^d,
\end{align}
where 
$f \in L_p( (0,T) ; L_p(\mathbf{R}^d))=L_p( (0,T) ; L_p)$ and
\begin{align*}
\cA(t)u(t,x) = \lim_{h \downarrow 0}\frac{\mathbb{E}\left[u(t,x+X_{t+h}-X_t)-u(t,x)\right]}{h}.
\end{align*}
We provide a sufficient condition on $X$ (see Assumptions \ref{as 2} and \ref{as})  to guarantee the unique solvability of  equation (\ref{ab main}) in $L_p\left( [0,T] ; H^\phi_{p}\right)$, where $H^\phi_{p}$ is a $\phi$-potential 
space on $\mathbf{R}^d$
 (see Definition \ref{psi sp}). Furthermore we show that for this solution,
\begin{align*}
\| u\|_{L_p\left( [0,T] ; H^\phi_{p}\right)} \leq N \|f\|_{L_p\left( [0,T] ; L_p\right)},
\end{align*}
where $N$ is independent of $u$ and $f$. 
\end{abstract}

\mysection{Introduction}

Roughly speaking, the second-order diffusion  equations  describe  the motion of  diffusion particles   moving according to a 
law of stochastic process driven by a Brownian motion. Such  equations are not suitable for  natural 
phenomena
 with jumps, and accordingly 
 there has been growing interest in equations with non-local operators   related to
pure jump processes owing to their applications in various models in physics, economics,
engineering and many others involving long-range interactions.

If the non-local operators are close to fractional Laplacian operator, then  there are considerable regularity results.   See e.g. \cite{bass1}, \cite{bass2}, \cite{dkim2}, \cite{KSV8} and \cite{S} for  the Harnack inequality and H\"older estimates.
Regarding $L_p$-regularity theory,    H. Dong and D. Kim \cite{DongKim2012}   obtained a sharp $L_p$-estimate for  the  nonlocal elliptic equation
\begin{equation}
 \label{eq 11.16.1}
Lu -\lambda u = f \quad \text{in}~\fR^d,
\end{equation}
where
$$
Lu(x) = \int_{\fR^d} \left( u(x+y)-u(x) - y \cdot \nabla u(x) \chi^{(\alpha)}(y) \right)\frac{a(y)}{|y|^{d+\alpha}} dy,
$$
$\alpha \in (0,2)$, $\chi^{(\alpha)}$ is a certain indicator function,
and $a(y)$ is a measurable function
with positive lower and upper bounds, that is, there exists $\delta>0$
\begin{align}
					\label{e 1107 1}
\frac{\delta}{|y|^{d+\alpha}} \leq \frac{a(y)}{|y|^{d+\alpha}} \leq \frac{\delta^{-1}}{|y|^{d+\alpha}} \quad \forall y \in \fR^d.
\end{align}
Observe that, since $ \alpha \in (0,2)$, 
${a(y)}{|y|^{-d-\alpha}} dy$
 is a L\'evy measure, i.e.
$$
\int_{\fR^d} \left( 1 \wedge |y|^2 \right) \frac{a(y)}{|y|^{d+
\alpha}} dy < \infty,
$$      
and 
$\frac{C(\alpha, d)}{|y|^{d+\alpha}} dy$
 is the L\'evy measure of the rotationally invariant $\alpha$-stable process.  
In \cite{zhang2013lp}, X. Zhang
introduced  a generalization of  (\ref{e 1107 1}). 
More precisely, he handled the Cauchy problem in $L_p$-space with the L\'evy measure $\nu(dy)$ (instead of $a(y)|y|^{-d-\alpha}dy$) with the condition 
$\nu_1^\alpha(dy)  \leq  \nu(dy) \leq \nu_2^\alpha(dy)$,
where  $\nu_i^{(\alpha)}$,  $i=1,2$, are the L\'evy measures of two $\alpha$-stable processes taking the form
\begin{align*}
\nu_i^{(\alpha)}(B) := \int_{\bS^{d-1}} \Big( \int_0^\infty \frac{1_{B}(r\theta)dr}{r^{1+\alpha}} \Big) \Sigma_{i}(d\theta),
\end{align*}
$\bS^{d-1}$ is the unit sphere in $\fR^d$ and $\Sigma_i$ is a measure on $\bS^{d-1}$.  We also refer to a recent result  \cite{mikulevicius2016lp}, where $L_p$-theory is presented for the elliptic and parabolic equations
$$
Lu-\lambda u=g, \quad  \quad \partial_t u=Lu-\lambda u+f, 
$$
on $\fR^d$ and $[0,T]\times \fR^d$ respectively. Here
\begin{align*}
L \phi(x) = L^\pi \phi (x) = \int_{\fR^d} \left(\phi(x+y) - \phi(x) -  \chi ^{(\pi)}(y) \cdot \nabla \phi(x) \right) 
\pi (dy), 
\end{align*}
and $\pi$ is supposed to satisfy a certain scaling property,  
which is called assumption $\mathbf{D}(\kappa,\ell)$ in \cite{mikulevicius2016lp}.

In this article we prove the unique solvability of  diffusion equation (\ref{main eqn}) with the generator of stochastic processes beyond  L\'evy processes.
In particular, we focus on  diffusion equations with generators of stochastic processes with non-stationary independent increments.  For instance, our stochastic processes $X_t$ can be of type $X_t=\int^t_0 a(s)dY_s$, where $Y_t$ is a subordinate Brownian motion, and $X_t$ can also be an additive process.  See Section 2.2 for more concrete examples.   We adopt  $\phi$-potential space (see  \cite{farkas2001function}) for the space of  solutions.  This is because  our operators are far away from $\alpha$-stable process and the classical Bessel potential space does not fit 
as a solution space.

We emphasize that  even if the stochastic process $X_t$ is a L\'evy process, our result cannot be covered by above results.  
For instance, an example related to Subordinate Brownian motions is given in \cite[Example 2.1 and Remark 2]{mikulevicius2016lp}.
In this example, there are conditions on 
weak scaling constants $\delta_1$ and $\delta_2$  such as $2 \delta_1 > 1$ and $2 \delta_1 >\delta_2$. 
However, we do not need this relation in our results (see Example \ref{ex1}).

Next we give a few remarks on our methods.
Due to the non-local property of our operators, classical 
perturbation arguments  are not available.  
Nonetheless, fortunately, our operators are still 
pseudo-differential  operators.
 If the symbols of 
 pseudo-differential  operators
  are smooth enough  then one can use classical tools 
from Harmonic and  Fourier analysis. 
However, if the moments of the given  process are not finite, the symbol of the generator of the  
 process
 loses the smoothness property.  We 
 overcome this difficulty using a probabilistic technique  together with analytic tools.
Technically our approach does not rely on the well-developed one-parameter semi-group theory 
since increments of 
our stochastic processes are not stationary.

The article is organized as follows. In section 2, we present our main result (Theorem \ref {main thm}), $L_p$-theory of  PDEs with generators of  non-stationary  independent increment processes. 
In Section 3, we introduce a version of singular integral theory which  fits  our equations. 
In Section 4, we prove a maximal $L_p$-regularity theory for a class of pseduo differential operators. The result of this section is used to prove our main result when the symbol of the operator is smooth. Section 5 contains the proof of our main theorem, and 
finally in Appendix we prove a version of the Fefferman-Stein theorem.

We finish the introduction by introducing   notations we will  use in the article.
$\bN$ and $\bZ$ denote the natural number 
system
and the integer number system, respectively.  Denote $\bZ_+ :=\{ k \in \bZ ; k  \geq 0\}$. 
As usual $\fR^{d}$
stands for the Euclidean space of points $x=(x^{1},...,x^{d})$.
 For $j=1,...,n$, multi-indices $\alpha=(\alpha_{1},...,\alpha_{n})$,
$\alpha_{j}\in \bZ_+$, and functions $u(x)$ we set
$$
u_{x^{j}}=\frac{\partial u}{\partial x^{j}}=D_{j}u,\quad
D^{\alpha}u=D_{1}^{\alpha_{1}}\cdot...\cdot D^{\alpha_{d}}_{d}u,
\quad  \nabla u=(u_{x^1}, u_{x^2}, \cdots, u_{x^d}).
$$
We also use the notations $D^m_x$ (and $D^\alpha_x$, respectively) for a partial derivative of order $m$ (of multi-index  $\alpha$, respectively)  with respect to $x$.
$C(\fR^d)$ denotes the space of bounded continuous functions on $\fR^d$.
For $n \in \bN$, we write $u \in C^n(\fR^d)$  if $u$ is $n$-times continuously differentiable in $\fR^d$, and $\sup_{x \in \bR^d, |\alpha|\leq n} |D^\alpha u|< \infty$. Simply we put $C^n := C^n(\fR^d)$. 
For $p \in [1,\infty)$, 
a normed space $F$ with norm $ \| \cdot \|_F$
and a  measure space $(X,\mathcal{M},\mu)$, $L_{p}(X,\cM,\mu;F)$
denotes the space of all $F$-valued $\mathcal{M}^{\mu}$-measurable functions
$u$ so that
\[
\left\Vert u\right\Vert _{L_{p}(X,\cM,\mu;F)}:=\left(\int_{X}\left\Vert u(x)\right\Vert _{F}^{p}\mu(dx)\right)^{1/p}<\infty,
\]
where $\mathcal{M}^{\mu}$ denotes the completion of $\cM$ with respect to the measure $\mu$.

For $p=\infty$, we write $u \in L_{\infty}(X,\cM,\mu;F)$ iff
$$
\|u\|_{L_{\infty}(X,\cM,\mu;F)}
:= \inf\left\{ \nu \geq 0 : \mu( \{ x: \|u(x)\|_F > \nu\})=0\right\} <\infty.
$$
If there is no confusion for the given measure and $\sigma$-algebra, we usually omit the measure and the $\sigma$-algebra.
In particular, for a domain $U \subset \fR^d$ we denote $L_p(U) = L_p(U,\cL, \ell ;\fR)$,
where $\cL$ is the Lebesgue measurable sets,  and $\ell$ is 
the  Lebesgue measure in $\fR^d$.
We use the notation $N$ to denote a generic constant which may change from line to line. 
While, throughout this paper the constants
 $N_j$,  $j=0, 1,\dots,$ will be fixed.
We use $N=N(a,b,\cdots)$ to indicate
a positive constant that depends on the parameters $a,b,\cdots$.

We use  ``$:=$" or ``$=:$" to denote a definition. $\lfloor a \rfloor$ is the biggest integer which is less than or equal to $a$.
By $\cF$ and $\cF^{-1}$ we denote the d-dimensional Fourier transform and the inverse Fourier transform, respectively. That is,
$\cF[f](\xi) := \int_{\fR^{d}} e^{-i x \cdot \xi} f(x) dx$ and $\cF^{-1}[f](x) := \frac{1}{(2\pi)^d}\int_{\fR^{d}} e^{ i\xi \cdot x} f(\xi) d\xi$
where $i$ is the imaginary number, i.e. $i^2=-1$.
We use $1_B$ to denote  the indicator of a set $B$.
For a Lebesgue measurable set $A$, 
we use $|A|$ to denote its Lebesgue measure.
 For a complex number $z$, $\Re[z]$ is the real part of $z$
and $\bar z$ is the complex conjugate of $z$.
For  a function space $\cH(U)$
on an open set $U$ in $\R^d$, we let   
$\cH_c(U):=\{f\in\cH(U): f \mbox{ has  compact support}\},$
$\cH_0(U):=\{f\in\cH(U): f \mbox{ vanishes at infinity}\}$.

\mysection{PDEs with generators of 
 independent  increment process}

\subsection{Assumptions}
 Let $T<\infty$.  Every stochastic process considered in this article is 
$\fR^d$-valued.
Recall that a measure $\mu$ on $\fR^d$ is a  L\'evy measure 
if
\begin{align*}
\mu(\{0\})=0 \quad \text{and} \quad \int_{\fR^d} \left(1 \wedge |x|^2 \right)\mu(dx) <\infty.
\end{align*}
\begin{assumption}
							\label{as 2}
(i) The stochastic process $X$ has a 
 pure jump component, that is  there exist two independent stochastic process $X^1$ and $X^2$ such that for all $t \geq s \geq 0$,
$X_t-X_s$ and  $X^1_t-X^1_s+X^2_t-X^2_s$  have same distributions  and 
\begin{align*}
&\bE e^{i \xi \cdot (X^1_t-X^1_s)} = \exp \left( \int_s^t \int_{\fR^d}\left( e^{i\xi \cdot x} -1 -i\xi \cdot x 1_{|x| <1} \right)\mu_r(dx) dr \right),
\end{align*}
where $\mu_r$ is a L\'evy measure for each $r \in [0,\infty)$.

(ii)  The paths of $X^2$ are locally bounded $(a.s.)$. i.e., 
$$
\bP \Big(\sup_{t \in [a,b]}|X^2_t| < \infty\Big)=1. \quad \text{ for all } \, 0<a<b< \infty.
$$				

\end{assumption}

Denote 
$$
\Psi_{X^1}(t,\xi)
:=\int_{\fR^d}\left( e^{i\xi \cdot x} -1 -i\xi \cdot x 1_{|x| <1} \right)\mu_t(dx)
$$
and
$$
\Phi_{X^1}(s,t,\xi):=\int_s^t  \Psi_{X^1}(r,\xi)\,dr.
$$

\begin{assumption}
					\label{as}
Let $d_0:= \left\lfloor \frac{d}{2} \right\rfloor+1$.
(i) There exists 
a complex-valued function 
$\Psi_X(t,\xi)$ on $[0,\infty)  \times \fR^d$ so that 
for all $t \geq s>0$ and $\xi \in \fR^d$
\begin{align*}
\bE e^{i \xi \cdot (X_t-X_s)}
=\exp \left( \int_s^t \Psi_X(r,\xi)dr\right)
=:\exp \left( \Phi_X(s,t,\xi)\right),
\end{align*}
and furthermore $\Psi_{X}$ and $\Phi_{X}$ satisfy the followings: 
\begin{itemize}
\item For each $\xi$, $\Psi_X(t,\xi)$ is locally integrable with respect to $t$ on $[0, T)$,
i.e. $\Psi_X(\cdot , \xi) \in L_1([0,t))$ for all $t \in [0,T)$.

\item 
$\xi \to \exp\left(\Phi_X(s,t,\xi)\right)$ and $\xi \to \Psi_X(t,\xi)\cdot \exp\left(\Phi_X(s,t,\xi)\right)$
 are locally bounded and have at most polynomial growth at infinity with respect to $\xi$ uniformly for $0<s<t<T$, i.e. there exists a $N>0$ so that
 \begin{align}
 \label{e:PhiPsi_upper}
 \sup_{0<s<t<T} \left(|\exp\left(\Phi_X(s,t,\xi)\right)| + |\Psi_X(t,\xi)\cdot \exp\left(\Phi_X(s,t,\xi)\right)| \right)
\leq N \left(1+ |\xi| \right)^N.
 \end{align}
\end{itemize}

(ii) There exists a nondecreasing function $\phi(\lambda) : (0,\infty) \mapsto (0,\infty) $ and positive constants $\delta_k$ and $N_j$ ($k=1,2,3$ and $j=1,2,3,4$) such that  
\begin{itemize}
\item   for all $\xi \neq 0$
\begin{align}
						\label{main as 3}
      \Re [-\Psi_{X^1}(t,\xi)]  \geq  \delta_1 \phi(|\xi|^2),
\end{align}
\item   for all $\xi \neq 0$ and multi-index $|\alpha| \leq  
d_0$,
\begin{align}
						\label{main as 4}
 |D^\alpha_{\xi} \Psi_{X^1}(t,\xi)| \leq  N_1 |\phi(|\xi|^2)| |\xi|^{-|\alpha|},
\end{align}

\item $ \delta_3 \geq \delta_2 > 0$ and  for any 
$\lambda_2 \geq \lambda_1>0$,
\begin{align}
						\label{main as}
N_2 \left(\frac{\lambda_2}{\lambda_1}\right)^{\delta_2} \leq  \frac{\phi(\lambda_2)}{\phi(\lambda_1)}  
\leq N_3 \left(\frac{\lambda_2}{\lambda_1} \right)^{\delta_3},
\end{align}

\item 
 for 
all $\lambda \in (0,\infty)$ and  natural number $n \leq d_0$,
\begin{align}
						\label{main as 2}
|D^n\phi(\lambda)| \leq N_4 \lambda^{-n} \phi(\lambda).
\end{align}
\end{itemize}
\end{assumption}

\begin{remark}
If  $X^2=0$, then Assumption \ref{as}
(ii) implies (i). 
\end{remark}

\subsection{Examples}

To introduce  examples  satisfying above assumptions, we recall some definitions and facts on subordinate Brownian motion.   
A function  $\phi : (0,\infty) \to (0,\infty)$  is called  a Bernstein function  with $\phi(0+)=0$ if $\phi$ 
 has a representation that
\begin{align}
						\label{bern repre}
\phi(\lambda) = b \lambda + \int_0^\infty (1- e^{-\lambda t}) \mu(dt),
\end{align}
where 
$b \geq 0$
 and $\mu$ is a measure on $(0,\infty)$ satisfying $\int_0^\infty (1 \wedge t) \mu(dt) < \infty$. 
Then it is well-known that $($e.g. \cite[Chapter 3]{schilling2012bernstein} and \cite[Lemma 3.2]{kim2013parabolic}$)$
\begin{align}
						\label{914 1}
\frac{\phi(\lambda_2)}{\phi(\lambda_1)} \leq \frac{\lambda_2}{\lambda_1}, \qquad 0<\lambda_1 < \lambda _2.
\end{align}
and for any nonnegative integer $n$,
\begin{align}
\label{e:dBFb}
\lambda^n |D^n\phi(\lambda)| \leq N(n) \phi(\lambda), \qquad \forall \lambda >0.
\end{align}
Thus $\phi$ satisfies (\ref{main as 2}).

 Let
 $S=(S_t)_{t\ge 0}$ be a subordinator  (i.e. an
increasing L\'evy process satisfying $S_0=0$), then there is a Bernstein function 
$\phi$ with $\phi(0+)=0$
such that $\bE e^{-\lambda S_t}=e^{-t\phi(\lambda)}$.  
Let $W=(W_t)_{t\ge 0}$ be a Brownian motion in $\R^d$,  
i.e. $\E\left[e^{i\xi \cdot W_t}\right]=e^{-t{|\xi|^2}}, \xi\in \R^d, t>0$,
which is independent of $S_t$. 
Then
     $Y_t:=W_{S_t}$,
     called the subordinate Brownian motion (SBM),   is a rotationally invariant L\'evy process in $\R^d$
with characteristic exponent $\phi(|\xi|^2)$, and by L\'evy-Khintchine theorem, 
\begin{equation}
 \label{levy1}
\bE [ e^{i \xi \cdot 
Y_t}] 
= e^{-t\phi(|\xi|^2)}
= \exp\left(t\int_{\fR^d}\left( e^{i\xi \cdot x} -1 -i\xi \cdot x 1_{|x| <1} \right)J(x)dx \right),
\end{equation}
where $J(x) = j(|x|)$ and 
$$
j(r)= \int_0^\infty (4\pi t)^{-d/2} e^{-r^2/(4t)} 
\mu(dt). 
$$

\begin{example}[Integral with respect to SBM]
					\label{ex2}

Assume 
that  the Bernstein function $\phi$ satisfies the following  
weak-scaling conditions:
\begin{itemize}
\item  There exist constants $0< \delta_1 \leq \delta_2  <1$ and $a_1 >0$ such that
\begin{align}\label{e:wsc}
a_1\left(\frac{\lambda_2}{\lambda_1}\right)^{\delta_1} 
\leq \frac{\phi(\lambda_2)}{\phi(\lambda_1)}
\leq a_2\left(\frac{\lambda_2}{\lambda_1}\right)^{\delta_2},
\qquad  0< \lambda_1 \leq \lambda_2 <\infty
\end{align}
\end{itemize}
Note that $\phi$ satisfies (\ref{main as}) and (\ref{main as 2}).

Let $\sigma : (0,\infty) \mapsto \fR$ be a bounded measurable function such that for all $t \in (0,\infty)$, $|\sigma(t)|\in (\delta, \delta^{-1})$ for some $\delta \in (0,1)$.  
Recall  $ Y_t=B_{S_t}$ and define 
$$
X^1_t:=\int_0^t \sigma(s) dY_s.
$$
Then
\begin{align*}
\bE e^{i \xi \cdot (X_t-X_s)} 
= \exp \left[ \int_s^t \left( \int_{\fR^d}\left( e^{i\xi \cdot x} -1 -i\xi \cdot x 1_{|x| <1} \right)\mu_{r}(dx) \right)dr \right],
\end{align*}
where $\mu_r(B) =\int_{\fR^d} 1_B(\sigma(r) x) J(x)dx$.  
Thus 
denoting 
$$
\Psi_{X^1}(t,\xi)=\int_{\fR^d}\left( e^{i\xi \cdot x} -1 -i\xi \cdot x 1_{|x| <1} \right)\mu_{t}(dx),
$$
we see that $X^1_t$ has a pure jump independent increment. 
Moreover, 
Due to the properties of the density function of Brownian motion and (\ref{bern repre}), 
\begin{align*}
-\Psi_{X^1}(r,\xi)
&=-\int_{\fR^d}\left( e^{i\xi \cdot x} -1 -i\xi \cdot x 1_{|x| <1} \right)\mu_{r}(dx)   \\
&= \lim_{\varepsilon \downarrow 0} \int_0^\infty \int_{|x| > \varepsilon } \left( 1-e^{i\xi \cdot x}  \right) (4\pi t \sigma(r)^2)^{-d/2} e^{ - \frac{|x|^2}{4t \sigma(r) ^2}} dx \mu(dt) \\
&= \int_0^\infty  \left( 1- e^{-t | \sigma(r) \xi|^2}  \right) \mu(dt) \\
&= \phi\left(    (\sigma(r) |\xi|)^2 \right).
\end{align*}
Therefore, since the Bernstein function $\phi$ satisfies (\ref{e:dBFb}) and $|\sigma(r)|$ has  positive upper and lower bounds, one can easily check that  (\ref{main as 3}) and (\ref{main as 4}) hold.


\end{example}

The following well-known examples of subordinators satisfy \eqref{e:wsc}:
\begin{itemize}
\item[1.] Stable subordinator: $\phi(\lambda)=\lambda^{\alpha}$, $0<\alpha<1$, with  $\delta=1-\alpha$.
\item[2.] Sum of two stable subordinators: $\phi(\lambda)=\lambda^\beta + \lambda^\alpha$, $0<\beta<\alpha<1$, with $\delta=1-\alpha$.
\item[3.] Stable with logarithmic correction: $\phi(\lambda)=\lambda^\alpha(\log(1+\lambda))^\beta$, $0<\alpha<1$,
$0<\beta < 1-\alpha$, with $\delta=1-\alpha-\epsilon$ for every $\epsilon >0$.
\item[4.]  Stable with logarithmic correction: $\phi(\lambda)=\lambda^\alpha(\log(1+\lambda))^{-\beta}$, $0<\alpha<1$,
$0<\beta < \alpha$, with $\delta=1-\alpha$.
\end{itemize}

In the next example the condition on $\phi$ is weakened.

\begin{example}[Additive process]
					\label{ex1}
Let $X^1_t=B_{S_t}$ be   a Subordinate Brownian motion.  Assume the the Laplace exponent of $S_t$ satisfies  the following 
condition:
\begin{itemize}
\item (H) : There exist constants $0< \delta_1   \leq 1$ and $a_1 >0$ such that
\begin{align*}
a_1\left(\frac{\lambda_2}{\lambda_1}\right)^{\delta_1} \leq \frac{\phi(\lambda_2)}{\phi(\lambda_1)}, 
\qquad  0<  \lambda_1 \leq \lambda_2 <\infty
\end{align*}
\end{itemize}
Then combining (\ref{914 1}) and (H), we have
\begin{align}
						\label{915 1}
a_1\left(\frac{\lambda_2}{\lambda_1}\right)^{\delta_1} \leq \frac{\phi(\lambda_2)}{\phi(\lambda_1)} 
\leq \left(\frac{\lambda_2}{\lambda_1}\right) \quad  0<\lambda_1 \leq \lambda_2<\infty.
\end{align}
Thus $\phi$ satisfies (\ref{main as}).

Let $a(t): (0,\infty) \mapsto (0,\infty)$ be a function  which is bounded from both above and below.
Define $\Psi_{X^1}(t,\xi)= -a(t)\phi(|\xi|^2)$. 
Then obviously 
due to (\ref{e:dBFb}),
$\Psi_{X^1}$ satisfies (\ref{main as 3}) and (\ref{main as 4}).
Moreover, there exists a additive process $X^1_t$ $($see \cite[Theorem 9.8(ii) and Theorem 11.5]{sato1999levy}$)$ such that
for all $t >0$ and $\xi \in \fR^d$
\begin{align*}
\bE e^{i \xi \cdot X^1_t} 
= \exp \left[ \int_0^t \Psi_{X^1}(s,\xi) ds\right],
\end{align*}
which is because $\phi(|\xi|^2)$ has  the representation (\ref{levy1}). 
\end{example}

The following well-known examples of subordinators satisfy \eqref{915 1} but do not  satisfy \eqref{e:wsc}:
\begin{itemize}
\item[1.] Relativistic stable subordinator: $\phi(\lambda)=(\lambda+m^{1/\alpha})^{\alpha}-m$, $0<\alpha<1$ and $m>0$, with $\delta=1-\alpha$. 
\item[2.]  $$\phi(\lambda)=\frac{\lambda}{\log(1+\lambda^{\beta/2})},  \quad \text{ where } \beta\in (0,2).$$ 
\end{itemize}

In the following example, we show that locally homogeneous additive process satisfies our assumption on $X^2$.

\begin{example}
						\label{x2 example}
Let $X_t^2$ be an additive process. Then by \cite[Theorem 9.8]{sato1999levy}, there exists a triple $(a(t),A(t),\mu_t)$ so that
\begin{align}
					\label{915 2}
\bE e^{i \xi \cdot X^2_t} 
= \exp \left( ia(t) \cdot \xi  -\frac{1}{2} (A(t)\xi,\xi) 
+\int_{\fR^d}\left( e^{i\xi \cdot x} -1 -i\xi \cdot x 1_{|x| <1} \right)\mu_t(dx) \right),
\end{align}
where $t \in [0,\infty)$, $\xi \in \fR^d$, $a(t) \in \fR^d$, $A(t)$ is a nonnegative symmetric matrix, and $\mu_t$ is a 
L\'evy measure for each $t \in [0,\infty)$, and $A(t), \mu_t$ are nondecreasing.
If  $a_t$, $A_t$, $\mu_t$ are absolutely continuous with respect to $dt$, say $a_t = \int_0^t a^*(s)ds$, 
$A(t)=\int_0^tA^*(s)ds$, $\mu_t=\int^t_0 \mu^*(s)ds$, then 
from (\ref{915 2}) we have
\begin{align*}
&\bE e^{i \xi \cdot X^2_t}  \\
&= \exp \left( \int_0^t \left(ia^*(s) \cdot \xi  -\frac{1}{2} (A^*(s)\xi,\xi) 
+ \int_{\fR^d}\left( e^{i\xi \cdot x} -1 -i\xi \cdot x 1_{|x| <1} \right)\mu^*_s(dx) \right)ds\right) \\
&=: \exp \left( \int_0^t \Psi_{X^2}(s,\xi)ds \right).
\end{align*}
Due to the absolute continuity assumption, it is obvious that
$\Psi_{X^2}$ is locally integrable with respect to $t$ for each $\xi \in \fR^d$.
Moreover, for any $0<s<t<T$,
\begin{align*}
\left|\exp\left(\int_s^t\Psi_{X^2}(r,\xi)dr  \right)  \right|
=\left|\bE e^{i \xi \cdot X^2_t}  \right|
\leq 1
\end{align*}
and
\begin{align*}
&\leq  |\Psi_{X^2}(t,\xi)|   \\
&\leq \left(\sup_{t \leq T} |A(t)| |\xi|^2 + N(1+|\xi|^2) \int_0^T \int_{\fR^d} (1+|x|^2) \mu^\ast(t)(dx) dt  \right) \\
&\leq N(T)\left(1+|\xi|^2\right),
\end{align*}
which implies  both $\exp\left(\int_s^t\Psi_{X^2}(r,\xi)dr  \right)$ and $\Psi_{X^2}(t,\xi)\exp\left(\int_s^t\Psi_{X^2}(r,\xi)dr  \right)$ are locally bounded and have a polynomial growth at infinity with respect to $\xi$ uniformly for $0<s<t<T$.
Moreover obviously paths of $X^2$ are locally bounded $(a.s.)$ since $X^2$ is a  c\'adl\'ag process.

Let $X_t^1$ be the process handled in Example \ref{ex1} or Example \ref{ex2}. 
By considering product of  probability spaces, we may assume $X_t^1$ and $X_t^2$ are independent without loss of generality. 
Set $X_t = X^1_t +X^2_t$. Then $X_t$ satisfies Assumption \ref{as}. 
\end{example}

At first glance, conditions (\ref{main as 3}) and  (\ref{main as 4})  seem to be  quite strong 
since  for each $t$ $ \Psi_{X^1}(t, \xi)$ has to be smooth in $\mathbf{R}^d\setminus \{0\}$  with respect to $\xi$.
However, these conditions are  imposed only on the symbol $\Psi_{X^1}$. 
We would like to emphasize that our symbol $\Psi_{X}(t,\xi)$, which is the sum of $\Psi_{X^1(t,\xi)}$ and $\Psi_{X^2}(t,\xi):=\Psi_{X}(t,\xi) - \Psi_{X^1}(t,\xi)$, does not have to be  smooth.  We give a  concrete simple example below.
\begin{example}
					\label{new exam}
Let $\alpha \in (0,2)$ and $a(t,x)$ be a  positive   measurable function on $(0,\infty) \times \fR^d$ and assume that $a(t,x)$ is bounded from both above and below, i.e. there exists a  constant $c \in (0,1)$ such that
$$
c \leq a(t,x) \leq c^{-1} \qquad  \forall (t,x) \in (0,\infty) \times \fR^d. 
$$
Using $a(t,x)$ and $c$, we define the following  L\'evy measures  for each $t>0$;
$$
\mu_t(dx):=a(t,x)  |x|^{-d-\alpha}dx, \quad 
\mu^1_t(dx):=\frac{c}{2} |x|^{-d-\alpha}dx
$$
and
$$
\mu^2_t(dx):= \left(a(t,x) -\frac{c}{2} \right)  |x|^{-d-\alpha}dx.
$$
Define 
\begin{align*}
\Psi_X(t,\xi)
:=\int_{\fR^d} \left( e^{i\xi \cdot x} -1 -i\xi \cdot x 1_{|x| <1} \right)   \mu_t(dx).
\end{align*}
and $(k=1,2)$
\begin{align*}
\Psi_{X^k}(t,\xi)
:=\int_{\fR^d} \left( e^{i\xi \cdot x} -1 -i\xi \cdot x 1_{|x| <1} \right)   \mu^k_t(dx).
\end{align*}
Observe that by the change of variables
\begin{align}
					\label{n smooth s}
\Psi_X(t,\xi)
&=\int_{\fR^d} \left( e^{i\xi \cdot x} -1 -i\xi \cdot x 1_{|x| <1} \right)   \mu_t(dx)  \\
&=-|\xi|^{\alpha} \int_{\fR^d} \left(  1-\cos (x^1 )  \right)   a\left(t,\frac{ O_{\xi} x}{|\xi|} \right) |x|^{-d-\alpha} dx,
\end{align}
where $O_\xi$ is an orthonormal matrix such that $O_{\xi}^T \xi = |\xi| e_1$ and $e_1=(1,0,\ldots,0)$. 
Since there is no regularity condition  on the coefficient $a(t,x)$, $\Psi_X(t,\xi)$ is not smooth with respect to $\xi$ in general. 
On the other hand, 
\begin{align*}
\Psi_{X^1}(t,\xi)
&:=\int_{\fR^d} \left( e^{i\xi \cdot x} -1 -i\xi \cdot x 1_{|x| <1} \right)   \mu^1_t(dx)  \\
& =- N(d,\alpha,c) |\xi|^{\alpha}
\end{align*}
and obviously $\Psi_{X^1}(t,\xi)$ satisfies (\ref{main as 3}) and (\ref{main as 4}).
Since for each $t>0$, $\mu$, $\mu^1$, $\mu^2$ are L\'evy measures, there exist additive processes $X_t$, $X^1_t$ and $X^2_t$ such that
\begin{align*}
\bE e^{i \xi \cdot X_t} 
= \exp \left( \int_0^t \Psi_{X}(s,\xi) ds \right),
\end{align*}
and
\begin{align*}
\bE e^{i \xi \cdot X^1_t} 
= \exp \left( \int_0^t \Psi_{X^1}(s,\xi) ds \right),
\quad \quad \bE e^{i \xi \cdot X^2_t} 
= \exp \left( \int_0^t \Psi_{X^2}(s,\xi) ds \right)
\end{align*}
due to \cite[Theorem 9.8(ii) and Theorem 11.5]{sato1999levy} again.
We may assume that $X^1$ and $X^2$ are independent.  
One can easily check that $\Psi_{X^2}$ satisfies the assumptions in Example \ref{x2 example}.
Therefore our assumptions hold for the symbol $\Psi_{X}(t,\xi)$  which is defined in (\ref{n smooth s}) and not smooth.
\end{example}

\begin{remark}
We acknowledge that there are still some singular symbols which our result cannot cover and were already considered  previously by other authors.  Here, we give an interesting simple example  not covered in this paper.

Let $\alpha \in (0,1)$ and $a(t)$ be a    function  on $[0,\infty)$  such that
$$
0< c \leq a(t) \leq c^{-1} \qquad  \forall t \in [0,\infty). 
$$
For each $t>0$, define
\begin{align*}
&\mu_t(dx)\\
&:=
a(t)  \left(
  |x_1|^{-1-\alpha}dx_1  \cdot\epsilon_0(dx_2, \ldots, dx_d) 
+\cdots    +   |x_d|^{-1-\alpha}d x_d \cdot  \epsilon_0(dx_1, \ldots, dx_{d-1})
\right).
\end{align*}
Here $\epsilon_0$ is the dirac measure centered at zero in $\fR^{d-1}$. 
Then
\begin{align*}
\Psi_{X^1}(t,\xi)
&:=\int_{\fR^d} \left( e^{i\xi \cdot x} -1 -i\xi \cdot x 1_{|x| <1} \right)   \mu_t(dx)  \\
& =- N(d,\alpha)a(t) \left( |\xi_1|^\alpha + \cdots + |\xi_d|^\alpha \right).
\end{align*}
This symbol does not satisfy (\ref{main as 4}).  This example was handled by X. Zhang (see \cite[Remark 2.7]{zhang2013lp})   if $a(t)$ is independent of $t$.  But  the case when   $a(t)$ depends on $t$ is not covered in  literature as far as we know.  
\end{remark}

\subsection{$L_p$-theory for diffusion equations in $\phi$-potential spaces}

In this subsection we present our main result,  the unique solvability of equation (\ref{main eqn}) in $\phi$-potential space.

\begin{defn}[$\phi$-potential space]
						\label{psi sp}
For $\zeta \in C_0^\infty(\fR^d)$, define
$$
\phi(\Delta)\zeta(x)
:=-\phi(-\Delta)\zeta(x)
:=\cF^{-1}\left[\phi(|\xi|^2) \cF[\zeta](\xi) \right](x).
$$
By $H^\phi_{p}$, we denote the space of functions $u \in L_p$ so that
there exists a sequence of $u_n \in C_c^\infty(\fR^d)$ such that
$$
\|u_n - u\|_{L_p} \to 0
$$
and
$$
\|\phi(\Delta)u_n - \phi(\Delta)u_m\|_{L_p} \to 0
$$
as $n,m \to \infty$.  
We call this sequence $u_n$ {\bf a defining sequence} of $u$.
For $u \in H^\phi_p$, we define
$$
\phi(\Delta)u := \lim_{n \to \infty} \phi(\Delta)u_n,
$$
where $u_n$ is a defining sequence of $u$ and the limit is understood in $L_p$-sense. 
\end{defn}

\begin{lemma}

(i) $\phi(\Delta)u$ is well defined for any $u \in H_p^\phi$,  that is,
it is independent of  the choice of defining sequences.

(ii) $H^\phi_{p}$ is a Banach space equipped with the norm
$$
\|u\|_{H^\phi_p} := \|u\|_{L_p} + \|\phi(\Delta) u\|_{L_p}.
$$

(iii) Suppose that Assumption \ref{as} holds. Then 
$$
\cA(t)\zeta(x) := \lim_{h \downarrow 0}\frac{\bE\left[\zeta(x+X_{t+h}-X_t)-\zeta(x)\right]}{h} $$
 is well defined for any $\zeta \in C^{\infty}_0(\fR^d)$. 
Moreover there exists an adjoint operator $\cA^*(t)$ so that
\begin{align}
						\label{925 1}
\int_{\fR^d}  \eta(x) \cA(t) \zeta(x)  dx 
=\int_{\fR^d}  \zeta(x) \cA^\ast(t) \eta(x) dx 
\end{align}
for all $\zeta, \eta \in C^{\infty}_0(\fR^d)$.
\end{lemma}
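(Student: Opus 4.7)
For parts (i) and (ii), the plan is to establish closability of $\phi(\Delta): C_c^\infty(\fR^d) \to L_p$ and then a standard completion argument. To prove (i) it suffices to show that if $w_n \in C_c^\infty$ with $w_n \to 0$ in $L_p$ and $\phi(\Delta)w_n \to g$ in $L_p$, then $g=0$. For any Schwartz test function $\eta$, Plancherel's theorem (valid because $\phi(|\xi|^2)$ is real) yields
\begin{equation*}
\int g \cdot \eta \, dx \;=\; \lim_{n\to\infty}\int \phi(\Delta)w_n \cdot \eta\, dx \;=\; \lim_{n\to\infty}\int w_n \cdot \phi(\Delta)\eta\, dx.
\end{equation*}
I would then check that $\phi(\Delta)\eta \in L_{p'}$ using the polynomial growth bound \eqref{main as} and the regularity bound \eqref{main as 2} on $\phi$, together with the Schwartz decay of $\cF[\eta]$: integration by parts away from the origin in the formula for $\cF^{-1}[\phi(|\xi|^2)\cF[\eta]]$ produces enough decay in $x$, while the estimate $\phi(|\xi|^2)\le N|\xi|^{2\delta_2}$ near the origin controls the low-frequency piece. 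Hence $\int g\eta = 0$ for all Schwartz $\eta$, so $g=0$. For (ii), given a Cauchy sequence $\{u_k\} \subset H^\phi_p$ with $L_p$-limits $u$ of $u_k$ and $g$ of $\phi(\Delta) u_k$, a diagonal extraction from the defining sequences of the $u_k$'s produces a defining sequence for $u$, showing $u \in H^\phi_p$ and $\phi(\Delta)u = g$. The norm axioms are routine once (i) is established.

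For (iii), I would work on the Fourier side. Since $X_{t+h}-X_t$ has characteristic function $\xi\mapsto\exp(\Phi_X(t,t+h,\xi))$, Fubini gives
\begin{equation*}
\cF\!\left[\bE\,\zeta(\cdot + X_{t+h}-X_t)\right](\xi) \;=\; \exp(\Phi_X(t,t+h,\xi))\,\cF[\zeta](\xi),
\end{equation*}
so that
\begin{equation*}
\frac{\bE[\zeta(x+X_{t+h}-X_t)]-\zeta(x)}{h} \;=\; \cF^{-1}\!\left[\frac{\exp(\Phi_X(t,t+h,\xi))-1}{h}\,\cF[\zeta](\xi)\right]\!(x).
\end{equation*}
The Lebesgue differentiation theorem applied to $\Phi_X(t,t+h,\xi) = \int_t^{t+h}\Psi_X(r,\xi)\,dr$ shows the quotient converges pointwise in $\xi$ to $\Psi_X(t,\xi)$ at a.e.\ $t$. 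The identity $\frac{d}{ds}\exp(\Phi_X(t,t+s,\xi))=\Psi_X(t+s,\xi)\exp(\Phi_X(t,t+s,\xi))$ combined with \eqref{e:PhiPsi_upper} bounds the difference quotient by $N(1+|\xi|)^N$, which against the Schwartz decay of $\cF[\zeta]$ enables dominated convergence. This yields the Fourier-multiplier representation
\begin{equation*}
\cA(t)\zeta(x) \;=\; \cF^{-1}\!\left[\Psi_X(t,\xi)\,\cF[\zeta](\xi)\right]\!(x).
\end{equation*}
Defining $\cA^*(t)\eta(x) := \cF^{-1}[\Psi_X(t,-\xi)\,\cF[\eta](\xi)](x)$, the adjoint identity \eqref{925 1} follows from Parseval and Fubini: both sides equal
\begin{equation*}
\frac{1}{(2\pi)^d}\int_{\fR^d} \Psi_X(t,\xi)\,\cF[\zeta](\xi)\,\cF[\eta](-\xi)\,d\xi,
\end{equation*}
as seen by substituting $\xi\mapsto -\xi$ on the right-hand side. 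The extension from Schwartz $\zeta$ to $\zeta\in C^\infty_0(\fR^d)$ is handled either by approximation or directly via the L\'evy--Khintchine representation of $\Psi_X$, because $\zeta\in C^\infty_0 \subset C^2_b$ makes the jump-integrals absolutely convergent.

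The step I expect to be delicate is the closability in (i): $\phi(|\xi|^2)$ is only known to be $C^{d_0}$ away from $\xi=0$ by \eqref{main as 2}, not globally smooth, so $\phi(|\xi|^2)$ is not a priori a tempered-distribution multiplier. Handling this requires carefully splitting the Fourier integral into a low-frequency piece (where $\phi(0+)=0$ and the weak scaling \eqref{main as} suffice) and a high-frequency piece (where the derivative bounds \eqref{main as 2} together with the Schwartz decay of $\cF[\eta]$ give decay of $\phi(\Delta)\eta(x)$ after integrations by parts).
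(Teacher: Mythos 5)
Your proposal follows essentially the same route as the paper for all three parts: (i) is proved by testing $\phi(\Delta)w_n$ against smooth test functions via Plancherel and moving $\phi(\Delta)$ onto the test function (the paper phrases this as comparing two defining sequences, which is equivalent to your closability formulation); (iii) is obtained by computing on the Fourier side, identifying the symbol as $\Psi_X(t,\xi)$, and defining $\cA^*(t)$ through the conjugated symbol — your $\Psi_X(t,-\xi)$ coincides with the paper's $\overline{\Psi_X(t,\xi)}$ since $\exp(\Phi_X)$ is a characteristic function. Your treatment is somewhat more careful than the printed proof, in particular flagging that $\phi(\Delta)\eta\in L_{p'}$ needs a genuine argument (the paper passes over this) and that the pointwise limit $\frac{1}{h}\int_t^{t+h}\Psi_X(r,\xi)dr\to\Psi_X(t,\xi)$ is a Lebesgue-point statement valid only for a.e.\ $t$ given the mere local integrability hypothesis; these are fair observations about gaps the paper leaves implicit, but they do not change the method.
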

\begin{proof}
First we prove (i). Let $u_n$ and $v_n$ be defining sequences of $u \in H_p^\phi$, respectively.
Then by the Plancherel theorem and Definition \ref{psi sp},
\begin{align*}
\int_{\fR^d} \phi(\Delta) u(x)  \zeta(x) dx
=\lim_{n \to \infty} \int_{\fR^d} \phi(\Delta) u_n(x)  \zeta(x) dx
=\lim_{n \to \infty} \int_{\fR^d}  u_n(x)  \phi(\Delta) \zeta(x) dx \\
=\int_{\fR^d}  u(x)  \phi(\Delta) \zeta(x) dx 
=\lim_{n \to \infty} \int_{\fR^d}  v_n(x)  \phi(\Delta) \zeta(x) dx
=\lim_{n \to \infty} \int_{\fR^d} \phi(\Delta) v_n(x)  \zeta(x) dx
\end{align*}
for all $\zeta \in C_0^\infty(\fR^d)$. Thus $\phi(\Delta)$ is well-defined.

(ii) is obvious due to the property of $L_p$-spaces.

Finally, we prove (iii).
Recall $\Phi_X(t,t,\xi)=0$.
Then due to Assumption \ref{as}(i),
\begin{align*}
\cA(t)\zeta(x) 
&= \lim_{h \downarrow 0}\frac{\bE\left[\zeta(x+X_{t+h}-X_t)-\zeta(x)\right]}{h} \\
&= \lim_{h \downarrow 0}\frac{ \bE \left[\cF^{-1}\left[ \left(e^{i\xi \cdot (X_{t+h} -X_t)}-1 \right)\cF[\zeta](\xi)\right](x) \right]}{h} \\
&= \lim_{h \downarrow 0}\cF^{-1}\left[  \frac{ \exp(\Phi_X(t,t+h,\xi))-\exp(\Phi_X(t,t,\xi))}{h}\cF[\zeta](\xi)\right](x)  \\
&= \cF^{-1}\left[ \Psi_X(t,\xi)\cF[\zeta](\xi)\right](x).
\end{align*}
Thus $\cA(t)$ is well-defined on $C_0^\infty(\fR^d)$ as a pseudo-differential operator since
$$
\Psi_X(t,\cdot)\cF[\zeta](\cdot) \in L_p(\fR^d) \qquad \forall p \in [1,\infty].
$$
Next, define 
$$
\cA^*(t)\zeta(x)=\cF^{-1}\left[ \overline{\Psi_X(t,\xi)}\cF[\zeta](\xi)\right](x).
$$
Then by the Plancherel theorem, (\ref{925 1}) holds. The lemma is proved. 
\end{proof}

\begin{defn}[Definition of  solutions]
						\label{def sol}
For a given $f \in L_p\left( [0,T] ; L_p\right)$,  we say that $u \in L_p\left( [0,T] ; L_{p}\right)$ is a solution to 
\begin{align}
								\label{main eqn}
\frac{\partial u}{\partial t}(t,x) = \cA(t)u(t,x) +f(t,x), \quad u(0,\cdot)=0, \quad (t,x) \in (0,T) \times \fR^d,
\end{align}
 if 
there exists a sequence $u_n \in C^\infty_c\left( (0,T) \times \fR^d \right)$ such that
$$
\frac{\partial u_n}{\partial t}- \cA(t)u_n \to f \quad \text{in} \quad L_p\left( [0,T] ; L_p\right)
$$
and
$$
u_n \to u \quad \text{in} \quad L_p\left( [0,T] ; H^\phi_{p}\right)
$$
as $n \to \infty$.
\end{defn}

\begin{remark}
            \label{remark defi}
If $u$ is a solution in the sense of Definition \ref{def sol} then it becomes a solution in  the usual weak-sense. 
Indeed, by the Plancherel theorem 
\begin{align*}
&-\int_{(0,T) \times \fR^d} u(t,x) \zeta_t(t,x) dt dx-\int_{(0,T) \times \fR^d} u(t,x) \cA^\ast(t)\zeta(t,x) dt dx \\
&=\lim_{n \to \infty}\int_{(0,T) \times \fR^d} \frac{\partial u_n}{\partial t}(t,x) \zeta(t,x) dt dx
-\lim_{n \to \infty} \int_{(0,T) \times \fR^d} \cA(t)u_n(t,x) \zeta(t,x) dt dx \\
&=\int_{(0,T) \times \fR^d} f(t,x) \zeta(t,x) dt dx \qquad \forall \zeta \in C_c^\infty((0,T) \times \fR^d).
\end{align*}
\end{remark}

Here is the main result of this section.

\begin{thm}
					\label{main thm}
				Suppose that	Assumptions \ref{as 2} and  \ref{as} hold. Then
for any $f \in L_p( [0,T] ; L_p)$,  there exists a unique solution $u \in  L_p\left( [0,T] ; H^\phi_{p}\right)$ to equation (\ref{main eqn}). 
Furthermore, for this solution $u$, we have
\begin{align}
						\label{main est 1}
\| u\|_{L_p\left( [0,T] ; H^\phi_{p}\right)} \leq C_1(d,p,\delta_k,N_j,T) \|f\|_{L_p\left( [0,T] ; L_p\right)}
\end{align}
and
\begin{align}
						\label{main est 2}
\| \phi(\Delta)u\|_{L_p\left( [0,T] ; L_p\right)} \leq C_2(d,p,\delta_k,N_j) \|f\|_{L_p\left( [0,T] ; L_p\right)},
\end{align}
where $\delta_k$ and $N_j$ ($k=1,2,3$ and $j=1,2,3,4$) are constants in Assumption \ref{as}.
\end{thm}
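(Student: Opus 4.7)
The plan is to reduce Theorem \ref{main thm} to the maximal $L_p$-regularity result for the smoother ``$X^1$-only'' equation that is to be proved in Section~4, and then to promote that result to the full operator $\cA(t)$ via a probabilistic shift argument that exploits Assumption~\ref{as 2}. The heart of the matter is the a priori bound (\ref{main est 2}); once it is established for $f \in C_c^\infty((0,T)\times\fR^d)$, the inequality (\ref{main est 1}) follows from the elementary bound $\|u\|_{L_p}\le T\|f\|_{L_p}$ (immediate from the Duhamel formula), while uniqueness and existence in $L_p([0,T];H^\phi_p)$ follow by density and the approximating-sequence structure of Definition~\ref{def sol}. First I would write the candidate classical solution as
\begin{align*}
u(t,x) = \int_0^t \cF^{-1}\big[e^{\Phi_X(s,t,\xi)} \cF[f(s,\cdot)](\xi)\big](x)\, ds,
\end{align*}
which is justified pointwise by the polynomial-growth estimate (\ref{e:PhiPsi_upper}) in Assumption~\ref{as}(i). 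Using Assumption~\ref{as 2}(i) and independence, $e^{\Phi_X(s,t,\xi)} = e^{\Phi_{X^1}(s,t,\xi)}\,\bE[e^{i\xi\cdot(X^2_t - X^2_s)}]$, and since multiplication by a characteristic function on the Fourier side corresponds to averaging over a random translation, one obtains the pointwise identity
\begin{align*}
\phi(\Delta)u(t,x) = \bE\Big[\int_0^t \phi(\Delta)v(s,t,\,x - (X^2_t - X^2_s))\, ds\Big],
\end{align*}
where $v(s,t,y) := \cF^{-1}\big[e^{\Phi_{X^1}(s,t,\xi)}\cF[f(s,\cdot)](\xi)\big](y)$. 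The key algebraic observation is that, for each sample path $\omega$ of $X^2$, a spatial change of variable absorbs the random shift: the inner integral equals $\phi(\Delta)\tilde u^\omega(t,\,x - X^2_t(\omega))$, where $\tilde u^\omega$ is the $X^1$-only solution driven by the translated source $\tilde f^\omega(s,y) := f(s,\,y - X^2_s(\omega))$.

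Next I would apply the Section~4 maximal $L_p$-regularity theorem (which is stated under Assumption~\ref{as}(ii) alone, involving only the symbol of $X^1$) pathwise: for each $\omega$,
\begin{align*}
\|\phi(\Delta)\tilde u^\omega\|_{L_p([0,T];L_p)} \leq N\|\tilde f^\omega\|_{L_p([0,T];L_p)} = N\|f\|_{L_p([0,T];L_p)},
\end{align*}
the final equality using translation invariance of the Lebesgue norm. Taking $L_p$-norms in $(t,x)$ of the expectation representation and combining Minkowski's inequality with Jensen's (to move the $L_p$-norm inside the expectation under the $p$-th power) gives
\begin{align*}
\|\phi(\Delta)u\|_{L_p([0,T];L_p)}^p \leq \bE\big[\|\phi(\Delta)\tilde u^\omega\|_{L_p([0,T];L_p)}^p\big] \leq N^p\|f\|_{L_p([0,T];L_p)}^p,
\end{align*}
which is (\ref{main est 2}). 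Uniqueness then follows from linearity together with the same a priori estimate applied to the difference of two solutions, and existence for general $f\in L_p([0,T];L_p)$ is obtained by approximating with $C_c^\infty$ functions: the corresponding candidate solutions form a Cauchy sequence in $L_p([0,T];H^\phi_p)$, and the limit is a solution in the sense of Definition~\ref{def sol}.

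The main obstacle I expect is the pathwise application of the Section~4 estimate. The random source $\tilde f^\omega$ is only measurable in $\omega$ and need not be smooth, so either the Section~4 estimate must be established directly for $L_p$ data with a universal constant independent of the translation, or the probabilistic reduction must first be carried out at the level of smooth approximants and the limits interchanged at the end. A secondary, more technical difficulty is to justify the repeated use of Fubini and the measurability in $\omega$ of all the integrands appearing in the representation of $\phi(\Delta)u$; here the local boundedness of the paths of $X^2$ guaranteed by Assumption~\ref{as 2}(ii) is essential, since it ensures that $s \mapsto X^2_s(\omega)$ is a well-defined measurable trajectory along which the random translations above are meaningful.
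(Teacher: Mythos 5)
Your proposal is correct and takes essentially the same route as the paper: the Duhamel representation, the factorization $e^{\Phi_X}=e^{\Phi_{X^1}}\,\bE[e^{i\xi\cdot(X^2_t-X^2_s)}]$ via independence, the interpretation of the $X^2$-factor as an average over random translations absorbed into a shifted source $f(s,\,\cdot-X^2_s(\omega))$, the pathwise application of the Section~4 estimate, and the Minkowski (equivalently Jensen/Fubini) step. The one concern you raise about pathwise applicability is indeed handled in the paper the second way you suggest, namely by working first with sources satisfying the rapid-decay condition (\ref{e 1030 1}) and invoking the local boundedness of $X^2$ paths (Assumption~\ref{as 2}(ii)) to ensure the shifted source stays in that class, passing to general $f$ by density at the end; the sign in your shift $x-(X^2_t-X^2_s)$ should be $x+(X^2_t-X^2_s)$, but this is immaterial for the $L_p$ bound.
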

The proof of this theorem will be given in Section \ref{pf main thm}.

\mysection{$L_p$-boundedness of singular integral operators }

In this section we introduce a version of Fefferman-Stein theorem  and Hardy-Littlewood maximal theorem. We also 
prove an $L_p$-boundedness of singular integral operators related to certain pseudo-differential operators.

Let $U=\fR^d$ or $U=\fR^d_+$. For a function $\varphi : (0,\infty)\to (0,\infty)$, 
by $\bQ_\varphi$ we denote the collection of all cubes
$$
Q^\varphi_c(t,x)
=(t,t+\varphi(c)] \times B_c(x),
$$
where $(t,x) \in [0,\infty) \times U$, $c >0$, and $B_c(x) = \{ y \in U : |x-y| < c\}$.

\begin{defn}
For locally integrable functions $f$,  denote
$$
f_{Q_c^\varphi(t_0,x_0)}(x)
:=   \aint_{Q_c^\varphi(t_0,x_0)} f(s,y) dsdy:=\frac{1}{|Q_c^\varphi(t_0,x_0)|}  \int_{Q_c^\varphi(t_0,x_0)} f(s,y) dsdy.
$$
The $\varphi$-type sharp function $f_{\varphi}^{\sharp}(t,x)$  and $\bM_{\varphi}f(t,x)$  are defined as  
\begin{align*}
f_{\varphi}^{\sharp}(t,x)
&:= \sup  \aint_{Q_c^\varphi(t_0,x_0)} |f(s,y)-f_{Q_c^\varphi(t_0,x_0)}|dsdy 
\end{align*}
and 
$$
\bM_{\varphi}g(t,x) := \sup  \aint_{Q_c^{\varphi}(t_0,x_0)} |g(s,y)|dsdy,
$$
where $(t,x) \in {(0,\infty)\times U}$,
and sup is taken over all $Q_c^\varphi(t_0,x_0) \in \bQ_\varphi$ containing $(t,x)$.

\end{defn}

\begin{assumption}
					\label{as v}
(i) $\varphi: (0,\infty)\to (0,\infty)$ is a  nondecreasing function  so that
\begin{align}
					\label{varphi first}
\lim_{r\downarrow 0}\varphi(r) =0,\quad  \lim_{r\uparrow \infty}\varphi(r)=\infty,
\end{align}
and
\begin{align}
					\label{tilde c}
\tilde c:=\sup_{r>0} \frac{\varphi( 2 r)}{\varphi(r)} < \infty.
\end{align}

(ii) There exists a constant  $\lambda_0>1 $ satisfying
\begin{align}
						\label{eq 10121}
\varphi(\lambda_0 r) \geq 2 \varphi(r) \quad \quad \forall r>0.
\end{align}
\end{assumption}

Assumption \ref{as v}(i) is sufficient to prove a $\varphi$-type Fefferman-Stein theorem, and condition (\ref{eq 10121}) is additionally needed for
 $\varphi$-type Hardy-Littlewood maximal theorem.

\begin{thm}[$\varphi$-type Fefferman-Stein Theorem]
                \label{fs thm}
Let $p \in (1, \infty)$ and suppose 
(\ref{varphi first}) and (\ref{tilde c}) hold. Then there exists a constant $N$ such that
$$
\|f\|_{L_p({{(0,\infty)\times U}})} \leq N(d,p,\tilde c) \|f_{\varphi}^{\sharp} \|_{L_p({{(0,\infty)\times U}})} \quad \quad \forall f \in L_p({{(0,\infty)\times U}}).
$$
\end{thm}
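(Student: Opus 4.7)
The plan is to run the classical Calderón--Zygmund/Fefferman--Stein argument with the standard Euclidean dyadic cubes replaced by the anisotropic family $\bQ_\varphi$. The crucial structural fact is that $|Q_c^\varphi(t,x)|=\varphi(c)\,|B_c(x)|$, so (\ref{tilde c}) gives a uniform doubling estimate $|Q_{\lambda c}^\varphi(t,x)|\le N(d,\tilde c,\lambda)\,|Q_c^\varphi(t,x)|$ for every $\lambda\ge 1$. This is precisely the hypothesis of a space of homogeneous type, and every step of the classical proof then transfers.

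First I would establish the basic covering and maximal machinery. A Vitali-type covering lemma for $\bQ_\varphi$ (depending only on doubling) allows one to extract from any family of cubes a disjoint subfamily whose uniform dilates still cover the union. From this one deduces the weak $(1,1)$ bound $|\{\bM_\varphi f>\lambda\}|\le N\lambda^{-1}\|f\|_{L_1}$ and, using (\ref{varphi first}) so that arbitrarily small cubes shrink to any point, the Lebesgue-type differentiation statement $|f(t,x)|\le \bM_\varphi f(t,x)$ almost everywhere.

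Next comes a Calderón--Zygmund decomposition at a level $\alpha>0$, via a stopping-time argument on a generalized dyadic grid (for instance, Christ's dyadic cubes in a space of homogeneous type). For $f\in L_1\cap L_p$ one obtains a disjoint family $\{Q_k\}\subset \bQ_\varphi$ covering $\{\bM_\varphi f>\alpha\}$ up to a null set and satisfying $\alpha<|Q_k|^{-1}\int_{Q_k}|f|\le N\alpha$. From this one derives the good-$\lambda$ inequality
$$
|\{\bM_\varphi f>2\alpha,\; f_\varphi^\sharp\le \gamma\alpha\}|\le N\gamma\,|\{\bM_\varphi f>\alpha\}|,\qquad \gamma\in(0,1),
$$
with $N=N(d,\tilde c)$, by the standard slice-by-slice argument inside each $Q_k$: on $Q_k$ the average of $|f|$ is comparable to $\alpha$, so the measure of $\{\bM_\varphi f>2\alpha\}\cap Q_k$ is controlled via Chebyshev by $|Q_k|^{-1}\int_{Q_k^\ast}|f-f_{Q_k}|$ (where $Q_k^\ast$ is a fixed dilate of $Q_k$), which in turn is bounded by $N f_\varphi^\sharp(t_\ast,x_\ast)$ at any sample point $(t_\ast,x_\ast)\in Q_k$ where $f_\varphi^\sharp\le\gamma\alpha$. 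Integrating the good-$\lambda$ inequality against $p\alpha^{p-1}\,d\alpha$ for $\gamma$ small enough yields $\|\bM_\varphi f\|_{L_p}\le N\|f_\varphi^\sharp\|_{L_p}$, and combined with $|f|\le \bM_\varphi f$ a.e.\ this proves the theorem; the general $L_p$ case is reduced to $f\in L_1\cap L_p$ by a standard truncation.

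The main obstacle is the Calderón--Zygmund step: the cubes in $\bQ_\varphi$ are balls in space and one-sided intervals in time, so they do not tile and naive bisection is unavailable. One must therefore either pass to Christ's dyadic cubes adapted to a quasi-distance like $\rho((t,x),(s,y)):=|x-y|+\varphi^{-1}(|t-s|)$, or perform a Whitney-type selection directly inside $\bQ_\varphi$ using only doubling. Since Assumption \ref{as v}(ii) is explicitly not in force here, one must take care not to invoke any scaling beyond the plain doubling provided by (\ref{tilde c}).
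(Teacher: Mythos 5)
Your proposal takes a genuinely different route from the paper, and the difference is precisely where your proof breaks down. The paper does \emph{not} run a Calder\'on--Zygmund / good-$\lambda$ argument. Instead, it invokes Krylov's martingale-type Fefferman--Stein theorem for a filtration of nested partitions (\cite[Lemma 3.2.4, Theorem 3.2.10]{Krylov2008}, quoted as Theorem \ref{kry fe-st}), and then explicitly constructs such a filtration $(\rP_n)_{n\in\bZ}$ on $(0,\infty)\times U$ out of anisotropic dyadic rectangles $\bigl(i\varphi(2^{-n})\sigma_n,(i+1)\varphi(2^{-n})\sigma_n\bigr]\times\fB_{2^{-n}}(i_1,\dots,i_d)$, where $\sigma_n\in[1,2)$ is chosen inductively to make the levels nest (Theorem \ref{ex filt 3}). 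These rectangles tile by construction; the bounded refinement ratio $\mu(\cP')/\mu(\cP)\leq N_0$ follows from the \emph{upper} doubling (\ref{tilde c}) alone, and the Lebesgue-differentiation hypothesis (\ref{ld pro}) comes from martingale convergence, not from a covering lemma. There is no maximal function, no Vitali covering, and no Calder\'on--Zygmund stopping time anywhere in the paper's proof.

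The gap in your approach is that every tool you list --- Vitali covering for $\bQ_\varphi$, weak $(1,1)$ bound for $\bM_\varphi$, a Calder\'on--Zygmund decomposition, Christ's dyadic cubes for a quasi-metric --- needs an \emph{engulfment} property which is equivalent, in this setting, to the lower scaling condition (\ref{eq 10121}) that Theorem \ref{fs thm} explicitly does not assume (the paper even remarks after Assumption \ref{as v} that (\ref{eq 10121}) is ``additionally needed'' only for the maximal theorem). Concretely, for two intersecting cubes $Q^\varphi_{c}(t,x)$ and $Q^\varphi_{c'}(s,y)$ with $c\geq c'$, containing the smaller one in a fixed dilate $Q^\varphi_{\lambda c}$ requires the time interval $(t-\varphi(c),t+2\varphi(c)]$, of length $3\varphi(c)$, to fit inside a window of length $\varphi(\lambda c)$; that is, one needs $\varphi(\lambda c)\geq 3\varphi(c)$ uniformly in $c$, which is exactly (\ref{eq 10121}). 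Likewise, the quasi-triangle inequality for your proposed $\rho((t,x),(s,y))=|x-y|+\varphi^{-1}(|t-s|)$ requires $\varphi^{-1}(2r)\leq K\varphi^{-1}(r)$, i.e.\ a doubling bound on $\varphi^{-1}$, which again is (\ref{eq 10121}); so ``space of homogeneous type'' is not free from (\ref{tilde c}) alone, and Christ's construction is not available. Finally, the step ``integrating the good-$\lambda$ inequality $\dots$ yields $\|\bM_\varphi f\|_{L_p}\leq N\|f^\sharp_\varphi\|_{L_p}$'' needs $\|\bM_\varphi f\|_{L_p}<\infty$ a priori, i.e.\ the Hardy--Littlewood Theorem \ref{HL}, which the paper proves \emph{only} under the full Assumption \ref{as v}, including (\ref{eq 10121}). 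You flag this tension yourself at the end, but you do not resolve it; under the hypotheses of Theorem \ref{fs thm} it cannot be resolved within the good-$\lambda$ framework, and the paper's filtration argument is precisely the device that makes the weaker hypothesis sufficient.
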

The proof of Theorem \ref{fs thm} will be given in Section \ref{hl fs pf}.

\begin{thm}[$\varphi$-type Hardy-Littlewood Theorem]
 \label{HL}
 Let $p \in (1, \infty)$ and suppose Assumption \ref{as v} holds. 
 Then for some constant $N=N(p,\varphi)>0$,
\begin{align}
						\label{hl thm}
\| \bM_{\varphi}g \|_{L_p((0,\infty)\times U)}
\leq N\|g \|_{L_p((0,\infty)\times U)} \qquad \forall  g \in L_p((0,\infty)\times U).
\end{align}
\end{thm}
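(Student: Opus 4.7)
The plan is to follow the classical route for Hardy--Littlewood-type maximal inequalities: first establish a weak-type $(1,1)$ bound for $\bM_\varphi$, and then interpolate with the trivial $L_\infty$ estimate $\|\bM_\varphi g\|_{L_\infty}\le \|g\|_{L_\infty}$ via the Marcinkiewicz theorem. The only non-automatic ingredient is the weak-$(1,1)$ estimate, which will rest on a Vitali-type covering lemma tailored to the cubes in $\bQ_\varphi$.

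The key technical step is to produce, for every $Q = Q_c^\varphi(t_0,x_0) \in \bQ_\varphi$, an enlarged set $\tilde Q$ with two properties: (a) $|\tilde Q| \le C_0\,|Q|$ for a constant $C_0 = C_0(d,\tilde c,\lambda_0)$; and (b) $Q' \subset \tilde Q$ for every $Q' = Q_{c'}^\varphi(t',x')\in \bQ_\varphi$ with $Q \cap Q' \neq \emptyset$ and $c' \le 2c$. A direct geometric inspection shows that I can take $\tilde Q := Q_{Kc}^\varphi(t_0 - \tilde c\,\varphi(c),\,x_0)$ for suitable $K$: the spatial containment $B_{c'}(x') \subset B_{Kc}(x_0)$ reduces to $K \ge 5$, and using $\varphi(c') \le \varphi(2c) \le \tilde c\,\varphi(c)$ the forward-time interval $(t',t'+\varphi(c')]$ sits inside $(t_0 - \tilde c\,\varphi(c),\, t_0 + (1+\tilde c)\varphi(c)]$, so it suffices to ensure $\varphi(Kc) \ge (1+2\tilde c)\,\varphi(c)$. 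Iterating (\ref{eq 10121}) gives $\varphi(\lambda_0^n c) \ge 2^n\varphi(c)$, so picking $n$ with $2^n\ge 1+2\tilde c$ and setting $K := \max(5,\lambda_0^n)$ does the job. The measure ratio $|\tilde Q|/|Q| = K^d\,\varphi(Kc)/\varphi(c) \le K^d\,\tilde c^{\lceil \log_2 K\rceil}$ is then controlled by iterating (\ref{tilde c}).

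Granted such an enlargement, the rest is standard. For $g \in L_1((0,\infty)\times U)$ and $\alpha > 0$, every $(t,x)\in E_\alpha := \{\bM_\varphi g > \alpha\}$ belongs to some $Q_{(t,x)} \in \bQ_\varphi$ with $|Q_{(t,x)}| < \alpha^{-1}\int_{Q_{(t,x)}}|g|\,ds\,dy$; in particular, by (\ref{varphi first}) the radii of such cubes are uniformly bounded in terms of $\alpha^{-1}\|g\|_{L_1}$. A greedy Vitali selection --- at each stage choose a cube whose radius exceeds half the supremum of the radii of the cubes not yet discarded, then discard every cube that meets it --- produces a countable disjoint subfamily $\{Q_k\}$ such that each cube in the original covering is contained in $\tilde Q_k$ for some $k$. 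Consequently,
$$
 |E_\alpha|\le \sum_k |\tilde Q_k| \le C_0\sum_k |Q_k| \le \frac{C_0}{\alpha}\sum_k \int_{Q_k} |g|\,ds\,dy \le \frac{C_0}{\alpha}\|g\|_{L_1},
$$
and the Marcinkiewicz interpolation theorem combines this with the trivial $L_\infty$ bound to yield (\ref{hl thm}) for every $p\in (1,\infty)$.

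\textbf{Main obstacle.} The heart of the argument is the construction of the enlargement $\tilde Q$: it must be large enough to absorb every smaller cube it meets, yet have measure comparable to $|Q|$, all while the cubes in $\bQ_\varphi$ are one-sided in time and the scaling is governed by the merely nondecreasing function $\varphi$. It is precisely in balancing the temporal side of this enlargement that condition (\ref{eq 10121}) becomes indispensable: it supplies the lower growth rate of $\varphi$ needed to absorb the extra factor $(1+2\tilde c)$ on the time axis, while (\ref{tilde c}) provides the matching upper bound that keeps $|\tilde Q|/|Q|$ finite. Once this covering lemma is in place, the Vitali selection, weak-$(1,1)$ inequality, and Marcinkiewicz interpolation proceed in the usual fashion.
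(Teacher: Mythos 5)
Your proof is correct and follows essentially the same approach as the paper: the paper verifies exactly the same engulfing and doubling properties of the $\varphi$-cubes (its first two bullet points correspond to your construction of $\tilde Q$ and the bound $|\tilde Q|\le C_0|Q|$) and then cites the classical Hardy--Littlewood maximal theorem of Stein, explicitly remarking that the key ingredient is the Vitali covering lemma. You simply unpack that citation by carrying out the Vitali selection, the weak-$(1,1)$ estimate, and the Marcinkiewicz interpolation in full.
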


\begin{proof}
One can easily  check that by (\ref{eq 10121})

\noindent
$\bullet$ \,$\exists$ $ \tilde{N}_1>0$ s.t. ${Q_c^{\varphi}}(t,x)\cap Q_c^{\varphi}(s,y)\neq \emptyset$ implies
${Q_c^{\varphi}}(s,y)\subset Q^{\varphi}_{\tilde N_1c}(t,x)$ ;\\
$\bullet$ \,$\exists \, \tilde{N}_2>0$ s.t.  $|Q^{\varphi}_{\tilde N_1c}(t,x)|\leq \tilde N_2 |{Q_c^{\varphi}}(t,x)|$ for all  $(t,x) \in (0,\infty) \times U$ and $c>0$;\\
$\bullet$\,  $\cap_{c>0}\overline{{Q_c^{\varphi}}(t,x)}=(t,x)$ \\
$\bullet$\, for each open set $\cO$ and $c>0$, the function $(t,x)\to
|{Q_c^{\varphi}}(t,x)\cap \cO|$ is continuous.

Hence the theorem follows from the classical Hardy-Littlewood maximal theorem (see \cite[Theorem 1.1]{Stein1993}). 
Actually, \cite[Theorem 1.1]{Stein1993}  is proved on $\fR^{d+1}$. 
The key idea of the proof of \cite[Theorem 1.1]{Stein1993} is Vitali's covering lemma, which holds for arbitrary measurable subset of $\fR^{d+1}$, and  by following the proof,
one can easily check that the Hardy-Littlewood Theorem holds also on $(0,\infty) \times U$.
\end{proof}

Let  $K(s,t,y,x)$ be a  measurable function defined on $(0,\infty)\times (0,\infty) \times U \times U$
so that $K(s,t,y,x)=0$ if $s \geq t$.
For a locally integrable function $f$ on $(0,\infty) \times U$,
denote 
\begin{align*}
\cT f(t,x)
&= \int_0^\infty \int_{U} K(s,t,y,x)f(s,y)dsdy  \\
&= \int_0^t \int_{U} K(s,t,y,x)f(s,y)dsdy  \\
&= \lim_{\varepsilon \downarrow 0}\int_0^{t-\varepsilon} \int_{U} K(s,t,y,x)f(s,y)dsdy \\
&= \lim_{\varepsilon \downarrow 0}\cT_\varepsilon f(t,x),
\end{align*}
where the sense of limit is specified in the following assumption.

\begin{assumption}      \label{bmo as}
For any $f \in L_2((0,\infty) \times U)$,
$\cT_\varepsilon f \to \cT f$ in $L_2((0,\infty) \times U)$ as $\varepsilon \to 0$.
Moreover, the operator $f \mapsto \cT f$ is bounded on $L_2((0,\infty) \times U)$, i.e.
there exists a constant $N_5$ so that
\begin{align*}
\|\cT f\|_{L_2({{(0,\infty)\times U}})} 
\leq N_5\|f\|_{L_2({{(0,\infty)\times U}})} \quad \quad \forall f \in L_2({{(0,\infty)\times U}}).
\end{align*}
\end{assumption}

The function $\varphi$ in the next assumption is the one in Assumption \ref{as v}.
\begin{assumption}[$\varphi$-type H\"ormander's condition]      
										\label{bmo as 2}
(i) There exists a function $\tilde{\varphi}:(0,\infty)\to (0,\infty)$ and constants $c_1, c_2>0$ such that  
\begin{equation}
      \label{hor 1}
    r\leq c_1 \varphi(\tilde{\varphi}(r)), \quad \tilde{\varphi}(\varphi(r))\leq c_2 r, \quad \quad \forall r>0.
\end{equation}

(ii) There exist   constants $c_0, N_6>0$  so that
for all $(t,x),(s,y) \in {(0,\infty)\times U}$,
\begin{align}            
					\label{bmo}
 \int_{ \tilde \varphi (|t-r|) +|x-z| \geq c_0( \tilde \varphi (|t-s|) + |x-y|)} | K(r,t,z,x) - K(r,s, z,y)| ~drdz 
\leq  N_6.
\end{align}

\end{assumption}

\begin{remark}
The simplest example of $\tilde \varphi$ above is the generalized inverse of $\varphi$.
Define 
$$
\varphi^{-1}(r) := \inf\{ s: \varphi(s) \geq r\}.
$$
Then obviously $\varphi^{-1}$ is nondecreasing and 
$\varphi^{-1}(  \varphi(r) ) \leq  r$. Also due to 
 (\ref{varphi first}), $0<\varphi^{-1}(r) <\infty$.  
If $\varphi$ is strictly increasing, then
$\varphi^{-1}$ is continuous and 
$\varphi^{-1}(  \varphi(r) ) =  r$.   If, in addition,  $\varphi$ is continuous  then we also have 
$\varphi( \varphi^{-1}(r)) =r$.
Therefore if $\varphi$ is strictly increasing and continuous 
then  we can take $\tilde \varphi(r) = \varphi^{-1}(r)$.
In general, even if   $\varphi$ is neither strictly increasing nor continuous,  due to (\ref{tilde c}) and (\ref{eq 10121}) one can find a constant $\delta>0$ so that
\begin{align*}
\delta^{-1} t \leq \varphi^{-1}(\varphi(t) ) \leq  t, \quad  \delta^{-1} t \leq \varphi(  \varphi^{-1}(t) ) \leq  \delta t
\quad \forall t.
\end{align*}
Therefore  one can still take $\tilde \varphi(r) = \varphi^{-1}(r)$.
\end{remark}

\begin{thm}
				\label{BMO theorem}
Let $p>2$ and suppose that Assumptions \ref{as v}, \ref{bmo as}, and \ref{bmo as 2} hold. 
Then for any $f\in L_2((0,\infty) \times U) \cap L_\infty((0,\infty) \times U)$, 
\begin{enumerate}[(i)]
\item 
\begin{equation}
                     \label{bmo part1}					
\left\|(\cT f)_\varphi^\sharp \right\|_{L_{\infty}( {(0,\infty)\times U} )} \leq N \|f\|_{L_\infty({(0,\infty)\times U})},
\end{equation}
\item 
\begin{equation}
                     \label{bmo part2}
\left\|\cT f\right\|_{L_p( {(0,\infty)\times U} )} \leq N \|f\|_{L_p( {(0,\infty)\times U})},
\end{equation}
\end{enumerate}
where the constant $N$ is independent of $f$. 
\end{thm}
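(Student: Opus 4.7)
The plan is to implement the standard BMO / Calder\'on--Zygmund argument adapted to the anisotropic cubes $Q_c^\varphi$, with Theorems \ref{fs thm} and \ref{HL} playing the role of the classical Fefferman--Stein and Hardy--Littlewood theorems.

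\smallskip\noindent\textbf{Part (i).} Fix $Q := Q_c^\varphi(t_0, x_0) \in \bQ_\varphi$, a reference point $(t^\ast, x^\ast) \in Q$, and an integer $j_0$ to be chosen large. Enlarge $Q$ to a set $Q^\ast$ roughly comparable to $Q_{\lambda_0^{j_0} c}^\varphi(t_0, x_0)$, but arranged so that $Q^\ast$ also reaches into the past of $t_0$ (to control source times $r<t_0$ appearing in $\cT f$). Decompose $f = f_1 + f_2$ with $f_1 := f \cdot 1_{Q^\ast}$, and write
\begin{equation*}
\aint_Q \bigl|\cT f - (\cT f)_Q\bigr|\,dsdy
\leq 2 \aint_Q |\cT f_1|\,dsdy + 2\aint_Q |\cT f_2(s,y) - \cT f_2(t^\ast, x^\ast)|\,dsdy.
\end{equation*}
For the local piece, Cauchy--Schwarz and Assumption \ref{bmo as} give
\begin{equation*}
\aint_Q |\cT f_1|\,dsdy
\leq |Q|^{-1/2}\|\cT f_1\|_{L_2}
\leq N_5 \bigl(|Q^\ast|/|Q|\bigr)^{1/2}\|f\|_{L_\infty},
\end{equation*}
and $|Q^\ast|/|Q|$ is bounded by a constant depending only on $d, j_0, \lambda_0, \tilde c$ via \eqref{tilde c} and \eqref{eq 10121}. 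For the non-local piece, the crucial geometric claim is that, for $j_0$ large enough (depending only on $c_0, c_1, c_2, \tilde c, \lambda_0$), every $(r,z) \in (Q^\ast)^c$ satisfies
\begin{equation*}
\tilde\varphi(|t^\ast - r|) + |x^\ast - z| \geq c_0\bigl(\tilde\varphi(|t^\ast - s|) + |x^\ast - y|\bigr)
\quad \forall (s,y) \in Q,
\end{equation*}
so that Assumption \ref{bmo as 2}(ii) yields
\begin{equation*}
|\cT f_2(s,y) - \cT f_2(t^\ast, x^\ast)|
\leq \|f\|_{L_\infty}\!\int_{(Q^\ast)^c}\! |K(r,s,z,y) - K(r,t^\ast,z,x^\ast)|\,drdz
\leq N_6 \|f\|_{L_\infty}.
\end{equation*}
Combining and taking the supremum over $Q \ni (t,x)$ proves \eqref{bmo part1}.

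\smallskip\noindent\textbf{Part (ii).} Set $Sf := (\cT f)^\sharp_\varphi$. Part (i) gives $\|S\|_{L_\infty\to L_\infty} \leq N$. For the $L_2\to L_2$ bound, use the elementary pointwise estimate $g^\sharp_\varphi \leq 2\, \bM_\varphi g$ together with Theorem \ref{HL} (with $p=2$) and Assumption \ref{bmo as}:
\begin{equation*}
\|Sf\|_{L_2} \leq 2\|\bM_\varphi(\cT f)\|_{L_2} \leq N\|\cT f\|_{L_2} \leq N N_5 \|f\|_{L_2}.
\end{equation*}
Since $S$ is sublinear, the Marcinkiewicz interpolation theorem yields $\|Sf\|_{L_p} \leq N\|f\|_{L_p}$ for every $2 < p < \infty$. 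Finally Theorem \ref{fs thm} delivers
\begin{equation*}
\|\cT f\|_{L_p}
\leq N \|(\cT f)^\sharp_\varphi\|_{L_p}
= N\|Sf\|_{L_p}
\leq N'\|f\|_{L_p},
\end{equation*}
which is \eqref{bmo part2}. A routine truncation of $f$ (to guarantee $\cT f \in L_p$ before invoking Fefferman--Stein) followed by a limiting argument removes the a-priori assumption.

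\smallskip\noindent\textbf{Main obstacle.} The principal technical difficulty is the geometric claim in part (i): choosing $Q^\ast$ so that its complement lies inside the $\varphi$-H\"ormander integration region of Assumption \ref{bmo as 2}(ii), uniformly over $(s,y)\in Q$. This is a bookkeeping exercise combining the doubling conditions \eqref{tilde c} and \eqref{eq 10121} with the quasi-inverse relation \eqref{hor 1} between $\varphi$ and $\tilde\varphi$; some care is required because the temporal base $t_0$ of $Q$ is not symmetric, so $Q^\ast$ must be defined to cover source times below $t_0$ as well.
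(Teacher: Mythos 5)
Your proposal is correct and follows essentially the same route as the paper's: the local/nonlocal decomposition with the $L_2$ bound and the $\varphi$-type H\"ormander condition to prove the sharp-function estimate (the paper's Lemma \ref{osc lemma}, which uses the double integral $\aint_Q\aint_Q|\cT f(t,x)-\cT f(s,y)|$ rather than a fixed reference point, an equivalent formulation up to a factor of $2$), followed by $g^\sharp_\varphi\le 2\bM_\varphi g$ with Theorem \ref{HL} for the $L_2$ bound, Marcinkiewicz interpolation, and Theorem \ref{fs thm}. You are also right to arrange $Q^\ast$ so that it extends backwards in time past $t_0$: since $K(r,t,\cdot,\cdot)$ vanishes only for $r\ge t$, source times $r\in(0,t_0]$ still contribute to $\cT f$ on $Q$, whereas the paper's enlarged cube $Q^\varphi_{\delta c}(t_0,x_0)=(t_0,t_0+\varphi(\delta c)]\times B_{\delta c}(x_0)$ has its time interval starting \emph{at} $t_0$; one should shift its base earlier, exactly as your ``main obstacle'' remark anticipates.
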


The proof of Theorem \ref{BMO theorem} is based on the following result.

\begin{lemma}       \label{osc lemma}
Suppose that Assumptions \ref{as v}, \ref{bmo as} and \ref{bmo as 2} hold.
Then for any $f\in  L_2({(0,\infty)\times U}) \cap L_\infty({(0,\infty)\times U})$ and $Q^\varphi_c(t_0,x_0) \in \bQ_\varphi$,
\begin{align}
                    \label{mean osc}
\int_{Q^\varphi_c(t_0,x_0)} \int_{Q^\varphi_c(t_0,x_0)} |  \cT f (t,x)- \cT f (s,y)|~dtds dxdy  
\leq   N|Q^\varphi_{c}(t_0,x_0)|^2 \cdot  \sup_{(0,\infty)\times  U} | f|,
\end{align}
where $N$  depends only on $d, \tilde{c}$, $N_5$, and $N_6$. 
\end{lemma}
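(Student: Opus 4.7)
The plan is to apply the standard Calder\'on--Zygmund scheme for mean oscillation bounds, adapted to the $\varphi$-parabolic quasi-distance $\rho((t,x),(s,y)) := \tilde\varphi(|t-s|) + |x-y|$ in which the H\"ormander condition \eqref{bmo} is phrased. First I will fix $Q := Q^\varphi_c(t_0,x_0)$ and introduce an enlargement
$$
Q^\ast := \bigl\{(r,z)\in(0,\infty)\times U : |t_0 - r| \leq \varphi(Kc),\ |x_0 - z| \leq Kc\bigr\},
$$
where $K$ is a constant to be fixed, depending only on $c_0, c_1, c_2, \tilde c,$ and $\lambda_0$. Iterating \eqref{tilde c} will give $|Q^\ast| \leq N|Q|$. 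The geometric claim I will need is that $K$ can be chosen large enough so that, whenever $(r,z) \notin Q^\ast$ and $(t,x), (s,y) \in Q$,
$$
\tilde\varphi(|t-r|) + |x-z| \geq c_0\bigl(\tilde\varphi(|t-s|) + |x-y|\bigr),
$$
i.e.\ $(r,z)$ lies in the integration region of \eqref{bmo}. For the right-hand side, \eqref{hor 1} yields $\tilde\varphi(|t-s|) + |x-y| \leq \tilde\varphi(\varphi(c)) + 2c \leq (c_2+2)c$. For the left-hand side, the case $|x_0-z| > Kc$ is immediate from the triangle inequality; the case $|t_0-r| > \varphi(Kc)$ is handled by iterating \eqref{eq 10121} to force $\varphi(Kc) - \varphi(c) \geq \tfrac{1}{2}\varphi(Kc)$, and then combining both halves of \eqref{hor 1} with monotonicity of $\tilde\varphi$ to convert the resulting lower bound on $|t-r|$ into a lower bound on $\tilde\varphi(|t-r|)$.

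With $Q^\ast$ in hand, I will split $f = f_1 + f_2$ with $f_1 := f \cdot 1_{Q^\ast}$, so that $\cT f = \cT f_1 + \cT f_2$ by Assumption \ref{bmo as}. The triangle inequality bounds the left-hand side of \eqref{mean osc} by
\begin{align*}
2|Q|\int_Q |\cT f_1(t,x)|\, dtdx \ +\ \int_Q\int_Q |\cT f_2(t,x) - \cT f_2(s,y)|\, dtdx\, dsdy.
\end{align*}
For the $f_1$ piece I will use Cauchy--Schwarz together with the $L_2$-bound from Assumption \ref{bmo as} and $|Q^\ast| \leq N|Q|$:
$$
\int_Q |\cT f_1|\, dtdx \leq |Q|^{1/2}\|\cT f_1\|_{L_2} \leq N_5 |Q|^{1/2}\|f_1\|_{L_2} \leq N_5 |Q|^{1/2}|Q^\ast|^{1/2}\sup|f| \leq N|Q|\sup|f|.
$$
For the $f_2$ piece the geometric claim lets me apply \eqref{bmo} pointwise in $(t,x), (s,y) \in Q$:
$$
|\cT f_2(t,x) - \cT f_2(s,y)| \leq \sup|f|\int_{(Q^\ast)^c}|K(r,t,z,x) - K(r,s,z,y)|\,drdz \leq N_6 \sup|f|,
$$
so the double integral is at most $N_6 |Q|^2 \sup|f|$. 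Adding the two estimates yields \eqref{mean osc}.

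The main obstacle is the geometric containment in the first paragraph: the choice of $K$ must force the support of $f_2$ to lie entirely inside the integration region of \eqref{bmo} \emph{uniformly} as $(t,x), (s,y)$ range over $Q$. This is where both the upper doubling \eqref{tilde c} and the lower doubling \eqref{eq 10121} of $\varphi$, together with both inequalities in \eqref{hor 1}, enter simultaneously. Once this geometric step is carried out the rest is a routine combination of $L_2$-boundedness and the triangle inequality.
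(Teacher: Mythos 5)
Your proposal follows essentially the same route as the paper: decompose $f$ so that $f_1$ is supported on an enlarged cube, control the near part with Cauchy--Schwarz and the $L_2$-bound of Assumption \ref{bmo as}, and control the far part pointwise via the H\"ormander condition \eqref{bmo} after the geometric containment. One caution on the geometric step: you cite ``monotonicity of $\tilde\varphi$,'' but Assumption \ref{bmo as 2} does not grant this. The paper argues the contrapositive---when the H\"ormander region condition \emph{fails}, the chain $|t-r|\le c_1\varphi(\tilde\varphi(|t-r|))\le c_1\varphi\bigl(c_0(\tilde\varphi(|t-s|)+|x-y|)\bigr)$ together with monotonicity of $\varphi$, both halves of \eqref{hor 1}, and the upper doubling \eqref{tilde c} places $(r,z)$ in the enlarged cube---so no inversion of $\tilde\varphi$, and in fact no use of \eqref{eq 10121}, is needed. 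Your direct version can be salvaged (iterate \eqref{eq 10121} to absorb the constant $c_1$ and use that $\varphi(a)>\varphi(b)$ forces $a>b$ for nondecreasing $\varphi$), but the contrapositive is both simpler and more economical. Your symmetric choice of $Q^\ast$ is also the safer one, since the kernel picks up contributions from $r<t_0$ which the paper's one-sided $Q^\varphi_{\delta c}(t_0,x_0)$ does not enclose.
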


\begin{proof}
Decompose $f$ into
$f = f \cdot 1_{Q^\varphi_{\delta c}}+  (f-f \cdot 1_{Q^\varphi_{\delta c}}) =: f_1 +f_2$,
where $\delta$ will be specified later.
Then obviously, $f_1$ has a support in $\overline{Q^\varphi_{\delta c}}$ and $f_2$ has a support in the closure of the complement of $Q^\varphi_{\delta c}$. 
First we estimate $\cT f_1$. 
By H\"older's inequality and Assumption \ref{bmo as},
\begin{align}
						\notag
&\int_{Q^\varphi_c(t_0,x_0)} \int_{Q^\varphi_c(t_0,x_0)} |  \cT f_1 (t,x)- \cT f_1 (s,y)|~dtds dxdy \\
						\notag
&\leq 2|Q^\varphi_c(t_0,x_0)|\int_{Q^\varphi_c(t_0,x_0)}  |  \cT f_1 (t,x)|~dtdx \\
						\notag
&\leq 2|Q^\varphi_c(t_0,x_0)|^{3/2} \left(\int_{Q^\varphi_c(t_0,x_0)} |  \cT f_1 (t,x)|^2~dtdx \right)^{1/2}\\
						\notag
&\leq 2N_5|Q^\varphi_c(t_0,x_0)|^{3/2} \left(\int_{Q^\varphi_{\delta c}(t_0,x_0)} |  f_1 (t,x)|^2~dtdx \right)^{1/2}\\
						\label{514}
&\leq N(d,N_5,\delta, \tilde c)|Q^\varphi_c(t_0,x_0)|^{2} \sup_{(0,\infty)\times U}|f|,
\end{align}
where in the last inequality we use the fact that there exists a $n \in \bN$ depending only on $\delta$ so that $
2^{n-1} \leq \delta \leq 2^{n}$
and thus
$\varphi(\delta c) \leq \varphi(2^n c) \leq (\tilde c)^n \varphi (c)$.

Next we estimate $\cT f_2$. Recall
\begin{align*}
\cT f_2(t,x) - \cT f_2(s,y)
= \int_0^\infty \int_{U} (K(r,t,z,x) - K(r,s,z,y)) ( f_2(r,z)) drdz
\end{align*}
and
$f_2(r,z)=0$ if $(r,z) \in Q^\varphi_{\delta c}(t_0,x_0)$.
Note that if 
$$
(t,x),(s,y) \in Q^\varphi_c(t_0,x_0)=(t_0,t_0+\varphi(c)] \times B_c(x_0)
$$
and
\begin{align*}
 \tilde \varphi (|t-r|) +|x-z| < c_0( \tilde \varphi (|t-s|) + |x-y|),
\end{align*}
then by (\ref{hor 1}) and (\ref{tilde c}),
\begin{align*}
|t_0 -r| 
&\leq |t_0 -t| + |t-r| \leq \varphi(c) +  c_1\varphi( \tilde \varphi (|t -r|)) \\
&\leq \varphi(c) +  c_1\varphi\left( c_0 \tilde \varphi (|t -s|) + c_0|x-y|\right) \\
&\leq \varphi(c) +  c_1\varphi( c_0 \tilde \varphi (\varphi(c)) + 2c_0c) \\
&\leq \varphi(c) +  c_1\varphi( c_0c_2c + 2c_0c) \\
&\leq \bar N (c_0,c_1,c_2, \tilde c) \varphi(c)
\end{align*}
and
\begin{align*}
|x_0 -z| 
&\leq |x_0 -x| + |x-z| \\
&\leq c  +  c_0 \tilde \varphi (|t -s|) + c_0|x-y| \\
&\leq c +   c_0 \tilde \varphi (\varphi(c)) +c_0|x-y|  \\
&\leq c +   c_0 c_2 c + 2c_0c \\
&\leq \tilde N(c_0,c_2) c
\end{align*}
Thus taking $\delta > \bar N+ \tilde N$, we have $(r,z) \in Q^\varphi_{\delta c}(t_0,x_0)$ and 
$$
(K(r,t,z,x) - K(r,s,z,y)) ( f_2(r,z))=0.
$$
Therefore by (\ref{bmo}),
\begin{align*}
&\left|\int_0^\infty \int_{U} (K(r,t,z,x) - K(r,s,z,y))   f_2(r,z) drdz \right| \\
& \leq 
\int_{ \tilde \varphi (|t-r|) +|x-z|  \geq  c_0( \tilde \varphi (|t-s|) + |x-y|)} |K(r,t,z,x) - K(r,s,z,y) |  drdz 
\sup_{(0,\infty) \times U} |f_2| \\
& \leq  N_6 \sup_{(0,\infty) \times U} |f_2| \leq N_6 \sup_{(0,\infty) \times U} |f|,
\end{align*}
which certainly implies 
\begin{align}
								\notag
&\int_{Q^\varphi_c(t_0,x_0)} \int_{Q^\varphi_c(t_0,x_0)} |  \cT f_2 (t,x)- \cT f_2 (s,y)|~dtds dxdy  \\
							\label{514 2}
&\quad \leq N_6 |Q^\varphi_{c}(t_0,x_0)|^2 \cdot  \sup_{(0,\infty)\times U} | f|.
\end{align}
Combining (\ref{514}) and (\ref{514 2}), we have (\ref{mean osc}). The lemma is proved. 
\end{proof}

\vspace{2mm}
\noindent
{\bf Proof of Theorem \ref{BMO theorem}}    
\vspace{2mm}

 By Lemma \ref{osc lemma},
\begin{align*}
\|(\cT f)_{\varphi}^\sharp\|_{L_{\infty}({(0,\infty)\times U})}\leq N \|f\|_{L_{\infty}({(0,\infty)\times U})}.
\end{align*}
Thus it is enough to prove    (\ref{bmo part2}).

Obviously, $(\cT f)_{\varphi}^\sharp\leq 2\bM_{\varphi}(\cT f)$.  Thus by Assumption \ref{bmo as} and Theorem \ref{HL},
$$
\|( \cT f)_{\varphi}^\sharp\|_{L_2({(0,\infty)\times U})} \leq N\|f\|_{L_2({(0,\infty)\times U})}.
$$

Note that the  map  $f \to (\cT f)^\sharp$ is subadditive since $\cT$ is a linear operator.
Hence by Marcinkiewicz's interpolation theorem,  for any $p \in (2, \infty)$ there exists a constant $N$ such that
for all  $f\in L_2({(0,\infty)\times U}) \cap  L_\infty({(0,\infty)\times U})$,
$$
\|( \cT f)_{\varphi}^\sharp\|_{L_p({(0,\infty)\times U})} \leq N \|f\|_{L_p({(0,\infty)\times U})}.
$$
Therefore by Theorem \ref{fs thm}, (\ref{bmo part2}) is proved. \qed

\mysection{PDE with pseudo-differential operators}

In this section we study PDEs with pseudo-differential operators.
The result of this section is a  generalization of   Theorem  \ref{main thm}  if  $X^2=0$.

Let $\Psi$ be a complex-valued function defined for $t>0$ and $\xi\in \fR^d$. Consider the equation
\begin{align}
					\label{apl eqn}
u_t= \Psi(t, i D )u + f, \qquad u(0, x)=0,
\end{align}
where 
\begin{align*}
\Psi(t,iD)u(t,x)
:=\cF^{-1} \left[ \Psi(t, \xi) \cF[ u(t, \cdot)](\xi) \right](x).
\end{align*}
Then formally the solution $u$ to equation (\ref{apl eqn}) is given by
\begin{align}
							\label{u rep}
u(t,x)= \cF^{-1}\left[\int_0^t  
\exp \left(\int_s^t \Psi(r,\xi)dr \right) \cF[f(s,\cdot)](\xi)
  ds \right](x)
\end{align}

Recall that $d_0= \left\lfloor \frac{d}{2} \right\rfloor+1$.

\begin{assumption}
							\label{psi as 1}
(i) There exists a nondecreasing function $\psi : (0,\infty) \mapsto (0,\infty)$ and positive constants 
 $\delta_5 \geq \delta_4$, $N_7$, and $N_8$ so that for any $ \lambda_2 \geq \lambda_1>0$
\begin{align}
						\label{psi as}
N_7 \left(\frac{\lambda_2}{\lambda_1}\right)^{\delta_4} \leq  \frac{\psi(\lambda_2)}{\psi(\lambda_1)}  
\leq N_8\left(\frac{\lambda_2}{\lambda_1} \right)^{\delta_5}.
\end{align}

(ii) There exist positive constants $\delta_6$ and $N_9$ so that
\begin{align}
						\label{Psi as 1}
      \Re [\Psi(t,\xi)]  \leq  -\delta_6 \psi(|\xi|^2),\quad
\end{align}
\begin{align}
						\label{Psi as 2}
 |D_\xi^\alpha \Psi(t,\xi)| \leq  N_9|\psi(|\xi|^2)| |\xi|^{-|\alpha|}
\end{align}
for all $(t, \xi) \in (0,\infty) \times \fR^d$ and multi-index $|\alpha| \leq  d_0$.

(iii) $\psi(\lambda)$ is $d_0$-times continuously differentiable and
 there exists a constant $N_{10}$ so that for all $\lambda \in (0,\infty)$ and a natural number $n \leq d_0$,
\begin{align}
						\label{psi dif}
|D^n\psi(\lambda)| \leq N_{10} \lambda^{-n} \psi(\lambda).
\end{align}
\end{assumption}

Denote
$$
\psi^{-1}(t) := \inf\{ s  \geq 0: \psi(s) \geq t\}
$$
Then 
$\psi^{-1}$ is a nondecreasing function from $(0,\infty)$ into $(0,\infty)$ and
there exist positive constants  $\bar N_1$, and $\bar N_2$ so that
for any $\lambda_2 \geq \lambda_1>0$,
\begin{align}
						\label{psi inverse}
\bar N_1 \left(\frac{\lambda_2}{\lambda_1}\right)^{1/\delta_5} 
\leq  \frac{\psi^{-1}(\lambda_2)}{\psi^{-1}(\lambda_1)}  
\leq \bar N_2\left(\frac{\lambda_2}{\lambda_1} \right)^{1/\delta_4}
\end{align}
where $\bar N_1$ and $\bar N_1$ depend only on $\delta_4$, $\delta_5$, $N_7$, and $N_8$.
Furthermore, $\psi( \psi^{-1}(t)) \sim t$ and $\psi^{-1}( \psi(t)) \sim t$, that is 
 for all $t>0$
\begin{align}
						\label{at}
N^{-1} t \leq \psi^{-1}(\psi(t) ) \leq  t, \quad  N^{-1} t \leq \psi(  \psi^{-1}(t) ) \leq  N t,
\end{align}
where $N$ depends only on $\delta_4$, $\delta_5$, $N_7$, and $N_8$.

Here is the main result of this section.

\begin{thm}
					\label{a p e psi}
Let $p \in (1,\infty)$ and suppose Assumption \ref{psi as 1} holds. 
Then for any $f \in L_2((0,\infty)\times \fR^d) \cap L_\infty((0,\infty)\times \fR^d)$
and $u$ defined as in (\ref{u rep}), we have 
\begin{align}
					\label{psi pot}
\| \psi(\Delta)u \|_{L_p((0,\infty)\times \fR^d)}
\leq N \| f \|_{L_p((0,\infty)\times \fR^d)},
\end{align}
 where $N$ depends only on $d$, $p$, $\delta_k$ and $N_j$ ($k=4,5,6$ and $j=7,8,9,10$),  and 
$$ \psi(\Delta)u(t,x) 
:=-\psi(-\Delta)u(t,x)  
:= \cF^{-1} \left[ \psi(|\xi|^2) \cF[u(t,\cdot)](\xi)\right](x).
$$
\end{thm}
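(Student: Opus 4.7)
The plan is to realize $\psi(\Delta)u$ as a singular integral operator on $(0,\infty)\times\fR^d$ and invoke Theorem \ref{BMO theorem}. Writing $\Phi_\Psi(s,t,\xi):=\int_s^t \Psi(r,\xi)\,dr$, one has
$$
\psi(\Delta)u(t,x) = \int_0^\infty\!\!\int_{\fR^d} K(s,t,y,x) f(s,y)\,dy\,ds, \quad K(s,t,y,x):=\cF^{-1}\bigl[-\psi(|\xi|^2)e^{\Phi_\Psi(s,t,\xi)}\bigr](x-y)\,1_{s<t}.
$$
The natural scaling function is $\varphi(c):=1/\psi(c^{-2})$ with companion $\tilde\varphi(t):=(\psi^{-1}(1/t))^{-1/2}$; the two-sided bounds (\ref{psi as}) and (\ref{psi inverse}) readily yield Assumption \ref{as v} and the comparison (\ref{hor 1}). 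Assumption \ref{bmo as} is immediate from Plancherel's theorem: bounding $|e^{\Phi_\Psi(s,t,\xi)}|\leq e^{-\delta_6(t-s)\psi(|\xi|^2)}$ by (\ref{Psi as 1}) and applying Young's inequality in $t$ to the convolution with kernel $t\mapsto \psi(|\xi|^2)e^{-\delta_6 t\psi(|\xi|^2)}$ (whose $L_1$-norm is $\delta_6^{-1}$) gives $\|\psi(\Delta)u\|_{L_2}\leq \delta_6^{-1}\|f\|_{L_2}$.

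The main technical step is verifying the $\varphi$-type H\"ormander condition (Assumption \ref{bmo as 2}). I would first derive pointwise estimates on $K$, $\nabla_x K$, and $\partial_t K$ by Fourier inversion. By Leibniz' rule combined with (\ref{psi dif}) and (\ref{Psi as 2}), every spatial derivative $D_\xi^\alpha$ with $|\alpha|\leq d_0$ of the symbol $\psi(|\xi|^2)e^{\Phi_\Psi(s,t,\xi)}$ produces an extra factor $|\xi|^{-|\alpha|}$; the extra polynomial $(t-s)\psi(|\xi|^2)$ factors arising from differentiating the exponential are absorbed by the decay via $x^k e^{-x}\lesssim_k 1$. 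Multiplying the Fourier integrand by $(ix)^\alpha$, integrating by parts, and performing the change of variables $\xi\mapsto (\psi^{-1}(1/(t-s)))^{1/2}\,\xi$ schematically yields
$$
|K(s,t,y,x)|\leq N\min\!\bigl(\psi(|x-y|^{-2})|x-y|^{-d},\;(t-s)^{-1}(\psi^{-1}(1/(t-s)))^{d/2}\bigr),
$$
with analogous bounds for $|\nabla_x K|$ and $|\partial_t K|$ carrying additional gain factors $|x-y|^{-1}$ and $(t-s)^{-1}$ respectively.

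Equipped with these pointwise bounds, I would handle the H\"ormander integral (\ref{bmo}) through the standard splitting
$$
|K(r,t,z,x)-K(r,s,z,y)|\leq |K(r,t,z,x)-K(r,s,z,x)|+|K(r,s,z,x)-K(r,s,z,y)|,
$$
apply the mean-value theorem to each difference, and integrate over dyadic annuli in the mixed quasi-distance $\tilde\varphi(|t-r|)+|x-z|$. The doubling properties (\ref{psi as}) and (\ref{psi inverse}) turn the resulting dyadic sum into a convergent geometric series bounded independently of $(t,x),(s,y)$. With all three assumptions of Theorem \ref{BMO theorem} satisfied, estimate (\ref{psi pot}) follows for $p\in(2,\infty)$. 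For $p\in(1,2)$, I would argue by duality: the formal adjoint has kernel of the same structural form, with $\Psi(t,\xi)$ replaced by $\overline{\Psi(t,-\xi)}$ and time reversed, and thus satisfies Assumption \ref{psi as 1} with identical constants; the corresponding $L_{p'}$-bound for $p'>2$ transfers back to an $L_p$-bound for the original operator. The endpoint $p=2$ is already covered by the Plancherel argument above.

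The principal obstacle will be the pointwise kernel analysis: carefully tracking how $d_0$ successive $\xi$-derivatives of the symbol convert into algebraic decay of $K$ using only the generalized doubling property of $\psi$ (rather than any explicit form), and then verifying that the dyadic sums in the H\"ormander integral close up uniformly in the pivot points.
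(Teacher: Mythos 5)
Your overall architecture matches the paper's: realize $\psi(\Delta)u$ as a singular integral with kernel $K(s,t,y,x)=1_{s<t}\cF^{-1}\bigl[\psi(|\xi|^2)e^{\Phi_\Psi(s,t,\xi)}\bigr](x-y)$, verify $L_2$-boundedness by Plancherel, verify a $\varphi$-type H\"ormander condition, and invoke Theorem \ref{BMO theorem} plus duality. The $L_2$ step and the duality reduction are fine, and your choices $\varphi(c)=1/\psi(c^{-2})$, $\tilde\varphi(t)=(\psi^{-1}(1/t))^{-1/2}$ are in fact the internally consistent ones for the cube $Q^\varphi_c=(t_0,t_0+\varphi(c)]\times B_c(x_0)$ and for the spatial quantity $\tilde\varphi(|t-r|)$ in (\ref{bmo}).

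However, there is a genuine gap at the step you yourself flag as the ``principal obstacle,'' and it cannot be closed the way you propose. You claim to derive the pointwise bound $|K(s,t,y,x)|\lesssim \psi(|x-y|^{-2})|x-y|^{-d}$ by multiplying the Fourier integrand by $(ix)^\alpha$ and integrating by parts $d_0$ times. But Assumption \ref{psi as 1} only provides control on $D_\xi^\alpha$ for $|\alpha|\le d_0=\lfloor d/2\rfloor+1$. After rescaling $\xi\mapsto a_{t-s}\xi$, $d_0$ integrations by parts give at best $|q_1(s,t,z)|\lesssim |z|^{-d_0}\int_{\fR^d}|\xi|^{\tilde\delta_2-d_0}e^{-c|\xi|^{\tilde\delta_1}}d\xi \lesssim |z|^{-d_0}$. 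For $d\ge 3$ one has $d_0<d$, so this decay is too weak to make $\int_{|z|\geq c}|q_1(s,t,z)|\,dz$ finite, let alone decay in $c$; the claimed bound $\psi(|x-y|^{-2})|x-y|^{-d}$ would require more than $d$ $\xi$-derivatives of the symbol, which the hypotheses do not supply. The point of restricting the regularity assumption to $|\alpha|\le \lfloor d/2\rfloor+1$ is precisely that pointwise ($L_\infty$) kernel bounds are out of reach.

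The paper avoids this by working in $L_2$ rather than pointwise. Lemma \ref{ker int fin} proves a weighted $L_2$ estimate
\begin{equation*}
\int_{\fR^d}\bigl| |z|^{d/2+\delta}\,q_1(s,t,z)\bigr|^2\,dz\le N, \qquad 0<\delta<\delta_4\wedge\tfrac12,
\end{equation*}
by integrating by parts only $d_0-1$ times and then applying the fractional operator $(-\Delta_\xi)^{\varepsilon/2}$ with $\varepsilon=d/2+\delta-(d_0-1)\in(0,1)$ through its singular-integral representation, controlling the pieces via Lemmas \ref{ker bd lem 2} and \ref{ker bd lem 3}. By Plancherel, $d/2+\delta$ ``derivatives'' of the symbol suffice for the $L_2$ estimate, and this is within the budget $d_0$. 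Cauchy--Schwarz then converts the weighted $L_2$ bound into the tail estimate $\int_{|z|\ge c}|q_1(s,t,z)|\,dz\lesssim c^{-\delta}$ of Lemma \ref{main cor}, which (together with the $\nabla_x$- and $\partial_t$-analogues via $q_{2,\ell}$, $q_3$) is exactly what closes the H\"ormander integral in Corollary 4.8. To repair your argument you would need to replace the pointwise kernel estimate with this weighted-$L_2$/fractional-derivative mechanism; the rest of your outline (the telescoping split, dyadic annuli in the quasi-distance, doubling of $\psi$) would then go through.

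Two smaller remarks. First, the paper's proof of Corollary 4.8 does not proceed by summing a geometric series over dyadic annuli; Lemma \ref{main cor} already produces integrated bounds over the relevant regions, and the corollary combines them directly---both routes work, but yours would need the repaired kernel estimates to even start. Second, the paper writes $\varphi(c)=\psi^{-1}(c^{-1})^{-1/2}$ and $\tilde\varphi(c)=\psi(c^{-2})^{-1}$, which is the reverse of your choice; this appears to be a notational slip in the paper (Corollary 4.8 uses $\varphi(|t-r|)=1/a_{t-r}$ where (\ref{bmo}) calls for $\tilde\varphi(|t-r|)$), and your convention is the coherent one.
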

We only show this theorem for $p \in [2,\infty)$ due to the duality argument.
To prove this theorem, we apply Theorem \ref{BMO theorem}.
Define
$$
p(s,t,x)= \cF^{-1} \left[ \exp \left(\int_s^t \Psi(r,\xi)dr \right) \right](x),
$$ 
and set
\begin{align*}
K(s,t,y,x)
&:= 1_{0<s<t}(\psi(\Delta))p(s,t,x-y) \\
&:=-1_{0<s<t}(\psi(-\Delta))p(s,t,x-y) \\
&:= 1_{0<s<t}\cF^{-1} \left[ \psi(|\xi|^2)\exp \left(\int_s^t \Psi(r,\xi)dr \right) \right](x-y).
\end{align*}
Note that due to Assumption \ref{psi as 1}(iii), for each $t>s$,
$\psi(|\xi|^2)\exp \left(\int_s^t \Psi(r,\xi)dr \right)$ is integrable with respect to $\xi$
and thus for any $g \in L_2(\fR^d)$,
\begin{align*}
&\cF^{-1} \left[ \psi(|\xi|^2)\exp \left(\int_s^t \Psi(r,\xi)dr \right)\cF g(\xi) \right](x) \\
&=\cF^{-1} \left[ \psi(|\xi|^2)\exp \left(\int_s^t \Psi(r,\xi)dr \right) \right](\cdot) 
\ast g(\cdot)(x) \\
&:=\int_{\fR^d}\cF^{-1} \left[ \psi(|\xi|^2)\exp \left(\int_s^t \Psi(r,\xi)dr \right) \right](x-y) g(y)dy \\
&= \int_{\fR^d}K(s,t,y,x)g(y)dy.
\end{align*}
Therefore (at least formally)
\begin{align*}
\psi(\Delta)u(t,x) 
= \int_0^t \psi(\Delta)p(s,t,x) \ast f(s,x)ds
&=\int_0^t \int_{\fR^d}K(s,t,y,x) f(s,y)dy ds \\
&=\lim_{\varepsilon \downarrow 0 } \int_0^{t-\varepsilon} \int_{\fR^d}K(s,t,y,x) f(s,y)dy ds \\
&=: \lim_{\varepsilon \downarrow 0 } \cT_{\varepsilon} f(t,x) =:\cT f(t,x),
\end{align*}
 where the limit is in $L_2((0,\infty)\times \fR^d)$ (see Lemma \ref{l2 lem}).
 
Set
\begin{align*}
{\varphi}(c):= \psi^{-1}(c^{-1})^{-1/2}, \quad \quad 
{\tilde \varphi}(c):= \psi(c^{-2})^{-1}.
\end{align*}
Then due to (\ref{psi as}) and (\ref{psi inverse}), $\varphi$ and $\tilde \varphi$ satisfy 
$$
\varphi(r) \downarrow 0 \quad \text{as} \quad r \downarrow 0,\quad \varphi(r) \uparrow \infty \quad \text{as} \quad r \uparrow \infty,
$$
$$
\tilde c:=\sup_{r>0} \frac{\varphi( 2 r)}{\varphi(r)} < \infty.
$$
$$r \leq c_1\varphi( \tilde \varphi(r)), \quad  \tilde \varphi(  \varphi(r) ) \leq  c_2 r \qquad \forall r \in (0,\infty).
$$
Thus under this setting, Assumptions \ref{as v} and  (\ref{hor 1}) hold.
Therefore in order to prove (\ref{psi pot}), 
 it suffices to show that Assumption \ref{bmo as} and  (\ref{bmo}) hold.
For this,  we need some preliminaries. 
Denote
$$
a_t := \left(\psi^{-1}(t^{-1}) \right)^{1/2},
$$
$$
\tilde \delta_1 =  \tilde \delta_1(\xi) =
\begin{cases}
&2\delta_4  \quad \text{if} \quad |\xi| \geq 1 \\
&2\delta_5  \quad \text{if} \quad |\xi| < 1,
\end{cases}
$$
and
$$
\tilde \delta_2 = \tilde \delta_2(\xi)=
\begin{cases}
&2\delta_5  \quad \text{if} \quad |\xi| \geq 1 \\
&2\delta_4  \quad \text{if} \quad |\xi| < 1
\end{cases}
$$
\begin{lemma}
					\label{psi lem}
For any $t \in (0,\infty)$ and  $\xi \in \fR^d$,
\begin{align}
						\label{psi at xi}
N^{-1} |\xi|^{\tilde \delta_1}\leq t \psi(|a_t\xi|^2) \leq  N |\xi|^{\tilde \delta_2} 
\end{align}
where $N$ depends only on $\delta_4$, $\delta_5$, $\delta_6$, $N_7$, and $N_8$.
\end{lemma}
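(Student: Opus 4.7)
The plan is to reduce the estimate to the weak-scaling property \eqref{psi as} of $\psi$, using the key normalization $\psi(a_t^2)\sim t^{-1}$ that comes from \eqref{at}.

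First I would note that by definition $a_t^2 = \psi^{-1}(t^{-1})$, so \eqref{at} gives
\[
N^{-1} t^{-1} \;\leq\; \psi(a_t^2) \;\leq\; N t^{-1},
\]
i.e., $t\,\psi(a_t^2)\sim 1$ uniformly in $t>0$. Since $|a_t\xi|^2 = a_t^2\,|\xi|^2$, it therefore suffices to control the ratio $\psi(a_t^2|\xi|^2)/\psi(a_t^2)$ by powers of $|\xi|$, which is exactly the content of \eqref{psi as}.

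Next I would split into two cases according to whether $|\xi|\geq 1$ or $|\xi|<1$. If $|\xi|\geq 1$, then $a_t^2|\xi|^2\geq a_t^2$, so I apply \eqref{psi as} with $\lambda_2 = a_t^2|\xi|^2$ and $\lambda_1 = a_t^2$ to obtain
\[
N_7\,|\xi|^{2\delta_4}\,\psi(a_t^2)\;\leq\;\psi(a_t^2|\xi|^2)\;\leq\;N_8\,|\xi|^{2\delta_5}\,\psi(a_t^2),
\]
which combined with $t\psi(a_t^2)\sim 1$ gives the desired bounds with $\tilde\delta_1=2\delta_4$ and $\tilde\delta_2=2\delta_5$. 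If $|\xi|<1$, then $a_t^2|\xi|^2< a_t^2$, so I apply \eqref{psi as} in the reversed direction, with $\lambda_2=a_t^2$ and $\lambda_1=a_t^2|\xi|^2$, to get
\[
N_7\,|\xi|^{-2\delta_4}\;\leq\;\frac{\psi(a_t^2)}{\psi(a_t^2|\xi|^2)}\;\leq\;N_8\,|\xi|^{-2\delta_5},
\]
i.e.\ $N_8^{-1}|\xi|^{2\delta_5}\psi(a_t^2) \leq \psi(a_t^2|\xi|^2)\leq N_7^{-1}|\xi|^{2\delta_4}\psi(a_t^2)$. Again invoking $t\psi(a_t^2)\sim 1$ yields the claim with $\tilde\delta_1=2\delta_5$ and $\tilde\delta_2=2\delta_4$, matching the definitions of $\tilde\delta_1(\xi),\tilde\delta_2(\xi)$ in that regime.

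There is no real obstacle here; the only thing to be careful about is which variable plays the role of $\lambda_2$ (the larger one) in \eqref{psi as}, which flips between the two regimes of $|\xi|$ and accounts for the asymmetric form of $\tilde\delta_1,\tilde\delta_2$. All constants $N$ produced along the way depend only on $\delta_4,\delta_5,\delta_6,N_7,N_8$, as stated.
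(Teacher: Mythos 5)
Your proof is correct and follows essentially the same route as the paper's: both normalize by $\psi(a_t^2)\sim t^{-1}$ via \eqref{at} and then apply the weak-scaling condition \eqref{psi as} to the ratio $\psi(a_t^2|\xi|^2)/\psi(a_t^2)$. You merely spell out the case split $|\xi|\geq 1$ versus $|\xi|<1$ (and the resulting reversal of roles of $\delta_4,\delta_5$) that the paper compresses into the single line ``Due to \eqref{psi as}, there exists $N$ so that \eqref{0423 eqn} holds.''
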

\begin{proof}
Due to (\ref{psi as}), there exists a $N$  so that
\begin{align}
						\label{0423 eqn}
N^{-1} |\xi|^{\tilde \delta_1}\leq  \frac{\psi(|a_t\xi|^2)}{ \psi(a^2_t)} \leq  N |\xi|^{\tilde \delta_2} \qquad \forall (t, \xi) \in \fR_+ \times \fR^d.  
\end{align}
Combining (\ref{at}) and (\ref{0423 eqn}), we have
\begin{align*}
\delta^{-1}N^{-1} |\xi|^{\tilde \delta_1}
\leq  t^{-1} \delta^{-1} \frac{t \psi(|a_t\xi|^2)}{ \psi(a^2_t)} 
\leq \psi(a^2_t) \frac{t\psi(|a_t\xi|^2)}{ \psi(a^2_t)} 
\leq  N |\xi|^{\tilde \delta_2}. 
\end{align*}

\end{proof}
\begin{corollary}
					\label{Psi cor}
For any $t \in (0,\infty)$ and  $\xi \in \fR^d$,
\begin{align*}
t \Re [\Psi(r,a_t\xi)] \leq - N |\xi|^{\tilde \delta_1},
\end{align*}
where $N$ depends only on $\delta_4$, $\delta_5$, $\delta_6$, $N_7$, and $N_8$.
\end{corollary}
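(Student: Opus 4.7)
The corollary is an immediate consequence of Assumption \ref{psi as 1}(ii) and Lemma \ref{psi lem}, so the plan is very short.

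First, I would invoke the pointwise bound \eqref{Psi as 1} from Assumption \ref{psi as 1}(ii) with the frequency variable $a_t \xi$ in place of $\xi$, which gives
\begin{equation*}
\Re[\Psi(r, a_t \xi)] \;\leq\; -\delta_6\, \psi(|a_t \xi|^2)
\end{equation*}
for every $r,t>0$ and $\xi \in \fR^d$. This bound is uniform in $r$, which is important since the quantity we want to control is independent of $r$ but involves $\Psi$ evaluated at the auxiliary time variable $r$.

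Next, I would multiply by $t>0$ and apply the lower bound \eqref{psi at xi} of Lemma \ref{psi lem}. Specifically,
\begin{equation*}
t\,\Re[\Psi(r, a_t \xi)] \;\leq\; -\delta_6\, t\, \psi(|a_t\xi|^2) \;\leq\; -\delta_6 N^{-1}\, |\xi|^{\tilde\delta_1},
\end{equation*}
where $N$ is the constant produced by Lemma \ref{psi lem}, which depends only on $\delta_4,\delta_5,\delta_6,N_7,N_8$. Setting the new constant to be $\delta_6 N^{-1}$ gives exactly the claimed inequality, with dependence on the correct parameters.

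There is no real obstacle here: the corollary simply repackages Lemma \ref{psi lem} through the a~priori negativity of $\Re[\Psi]$. The only point to double-check is the dependence of constants — one should verify that neither Assumption \ref{psi as 1}(ii) nor Lemma \ref{psi lem} introduces a dependence on $d$, $N_9$, $N_{10}$ (which would contradict the stated list), and indeed both inputs involve only the scaling parameters $\delta_4,\delta_5,\delta_6$ and the scaling constants $N_7,N_8$ (together with $\delta_6$ from \eqref{Psi as 1}).
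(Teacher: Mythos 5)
Your proof is correct and essentially identical to the paper's: apply \eqref{Psi as 1} at the point $a_t\xi$ to get $\Re[\Psi(r,a_t\xi)]\leq-\delta_6\psi(|a_t\xi|^2)$, then multiply by $t$ and use the lower bound in Lemma \ref{psi lem}. The remark on constant dependence is a fair sanity check but not part of the argument itself.
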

\begin{proof}
By (\ref{Psi as 1}) and Lemma \ref{psi lem},
\begin{align*}
t \Re [\Psi(r,a_t\xi)] 
& \leq -\delta_6 t \psi( |a_t\xi|^2) \leq -N |\xi|^{\tilde \delta_1}.
\end{align*}
\end{proof}
First, we prove that Assumption \ref{bmo as} holds.

\begin{lemma}
						\label{l2 lem}
There exists a constant $N(d,\delta_6)$ such that
\begin{align}
\|\cT f \|_{L_2((0,\infty)\times \fR^d)}
\leq N\|f \|_{L_2((0,\infty)\times \fR^d)} \qquad \forall  f \in L_2((0,\infty)\times \fR^d).
\end{align}
\end{lemma}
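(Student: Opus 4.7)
The plan is to diagonalize $\cT$ by Plancherel's theorem in $x$, reducing the $L_2$ bound to a one-dimensional estimate in $t$ at each frequency $\xi$, which then follows from Young's convolution inequality combined with the pointwise decay (\ref{Psi as 1}).

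For $f \in C_c^\infty((0,\infty)\times \fR^d)$ and $\varepsilon>0$, write $\hat f(s,\xi):=\cF[f(s,\cdot)](\xi)$. Since $\psi(|\xi|^2)\exp(\int_s^t \Psi(r,\xi)\,dr)$ is integrable in $\xi$ whenever $t-s\ge \varepsilon$ (using (\ref{Psi as 1}) together with the polynomial growth of $\psi$ implied by (\ref{psi as})), the kernel $K$ is a genuine convolution kernel and
\begin{align*}
\cF[\cT_\varepsilon f(t,\cdot)](\xi) = \psi(|\xi|^2)\int_0^{(t-\varepsilon)_+} \exp\Bigl(\int_s^t \Psi(r,\xi)\,dr\Bigr)\hat f(s,\xi)\,ds.
\end{align*}
By (\ref{Psi as 1}), $\bigl|\exp\bigl(\int_s^t \Psi(r,\xi)\,dr\bigr)\bigr| \leq e^{-\delta_6(t-s)\psi(|\xi|^2)}$ for $0<s<t$; extending $\hat f(\cdot,\xi)$ by zero to negative times and setting $k_\xi(r):=1_{r>0}\,e^{-\delta_6 r\psi(|\xi|^2)}$, we obtain
\begin{align*}
\bigl|\cF[\cT_\varepsilon f(t,\cdot)](\xi)\bigr| \leq \psi(|\xi|^2)\bigl(k_\xi\ast |\hat f(\cdot,\xi)|\bigr)(t).
\end{align*}
Since $\|k_\xi\|_{L_1(\fR)}=(\delta_6\psi(|\xi|^2))^{-1}$, Young's inequality gives $\int_0^\infty |\cF[\cT_\varepsilon f(t,\cdot)](\xi)|^2\,dt \le \delta_6^{-2}\int_0^\infty |\hat f(s,\xi)|^2\,ds$. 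Integrating in $\xi$ and applying Plancherel in $x$ on both sides produces $\|\cT_\varepsilon f\|_{L_2((0,\infty)\times \fR^d)}\le \delta_6^{-1}\|f\|_{L_2((0,\infty)\times \fR^d)}$ (the Plancherel constant cancels on both sides).

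To finish, I would send $\varepsilon\downarrow 0$. For each fixed $\xi$ and $t$, the Fourier integrand is dominated by $\psi(|\xi|^2)e^{-\delta_6(t-s)\psi(|\xi|^2)}|\hat f(s,\xi)|$, which is integrable on $(0,t)$ uniformly in $\varepsilon$, so $\cF[\cT_\varepsilon f(t,\cdot)](\xi)\to\cF[\cT f(t,\cdot)](\xi)$; together with the uniform $L_2$ bound and Plancherel, this gives $\cT_\varepsilon f\to \cT f$ in $L_2$ and the bound passes to the limit. A density argument extends the estimate to all $f\in L_2$. The argument is essentially mechanical given (\ref{Psi as 1}); the only point requiring care is the Fourier-multiplier interpretation of $K$, which is why the definition goes through the $\varepsilon$-truncation in Assumption \ref{bmo as}.
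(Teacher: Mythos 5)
Your proof is correct and follows essentially the same route as the paper: pass to the Fourier side in $x$ via Plancherel, use \eqref{Psi as 1} to dominate $|\exp(\int_s^t\Psi(r,\xi)\,dr)|$ by $e^{-\delta_6(t-s)\psi(|\xi|^2)}$, and then bound the resulting one-dimensional convolution in $t$; the paper phrases the last step as Minkowski's integral inequality while you invoke Young's $L_1*L_2\to L_2$ inequality, but these are the same estimate, both reducing to $\int_0^\infty \psi(|\xi|^2)e^{-\delta_6 s\psi(|\xi|^2)}\,ds=\delta_6^{-1}$. Your extra care in passing $\varepsilon\downarrow 0$ and in justifying the Fourier-multiplier interpretation is a welcome bit of rigor that the paper leaves implicit.
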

\begin{proof}
By Fubini's theorem, Plancherel's theorem, (\ref{Psi as 1}), and Minkowski's inequality,
\begin{align*}
&\|\cT f \|^2_{L_2((0,\infty)\times \fR^d)} \\
&\leq \int_{\fR^d}  \int_0^\infty \left(\int_0^t \psi(|\xi|^2)e^{\int_s^t \Psi(r,\xi) dr}\cF(f)(s,\xi) ds \right)^2  dt d\xi \\
&\leq \int_{\fR^d}  \int_0^\infty \left(\int_0^t \psi(|\xi|^2) e^{-\delta_3 s \psi(|\xi|^2)}|\cF(f)(t-s,\xi)| ds \right)^2  dt d\xi \\
&\leq \int_{\fR^d} \left(\int_0^\infty   \left(  \int_0^\infty |\cF(f)(t-s,\xi)|^2 dt \right)^{1/2}  \psi(|\xi|^2) e^{-\delta_3 s \psi(|\xi|^2)} ds \right)^2d\xi \\
&\leq N  \int_0^\infty \int_{\fR^d} |f(t,x)|^2 dtdx. 
\end{align*}
The lemma is proved. 
\end{proof}

Next we show that $K$ satisfies (\ref{bmo}).
Denote
\begin{align*}
q_1(s,t,x)=(t-s) \cF^{-1} \left[ \psi(|a_{t-s}\xi|^2) \exp \left(\int_s^t \Psi(r,a_{t-s}\xi)dr \right) \right](x),
\end{align*}
\begin{align*}
q_{2,\ell}(s,t,x)=(t-s) \cF^{-1} \left[  \xi^\ell \psi(|a_{t-s}\xi|^2) \exp \left(\int_s^t \Psi(r,a_{t-s}\xi)dr \right) \right](x),\quad \ell=1, \dots d,
\end{align*}
and
\begin{align*}
&q_3(s,t,x) \\
&= (t-s)^2  \cF^{-1} \left[ \Psi(t, a_{t-s}\xi)   \psi(|a_{t-s}\xi|^2)\exp \left(\int_s^t \Psi(r,a_{t-s}\xi)dr \right) \right](x).
\end{align*}
By the change of variables, 
\begin{align}
						\label{q1 rela}
(t-s)(a_{t-s})^{-d} \psi(\Delta)p(s,t, \cdot)((a_{t-s})^{-1}x)=q_1(s,t,x),
\end{align}
\begin{align}
                    \label{rela p q1}
(t-s)(a_{t-s})^{-d-1}\psi(\Delta)p_{x^j}(s,t, \cdot)( (a_{t-s})^{-1}x)=q_{2,\ell}(s,t,x),
\end{align}
and
\begin{align}
                    \label{rela p q2}
(t-s)^2(a_{t-s})^{-d}\frac{\partial}{\partial t} \psi(\Delta) p(s,t,\cdot)((a_{t-s})^{-1}x) 
=q_3(s,t,x).
\end{align}

Note that 
by (\ref{psi dif}), (\ref{Psi as 1}), (\ref{Psi as 2}), Lemma \ref{psi lem}, and Corollary \ref{Psi cor}, there exists a positive constant $N$ such that for all $\xi \neq 0$, 
\begin{align}
						\notag
& \left|D^\alpha_{\xi}\left(\cF(q_1(s,t,\cdot)(\xi) \right)  \right| \\
						\notag
&=(t-s)\left|D^\alpha_{\xi}\left( \psi(|a_{t-s}\xi|^2) \exp \left(\int_s^t \Psi(r,a_{t-s}\xi)dr \right) \right)  \right| \\
								\label{903 1}
&\leq N\left||\xi|^{ \tilde \delta_2 -|\alpha|} \exp \left(-N^{-1}|\xi|^{\tilde \delta_1} \right)   \right|.
\end{align}

\begin{lemma}
						\label{902 lem 1}
There exists a constant $N=N(d,\delta_k,N_j)$ ($k=4,5,6$ and $j=7,8,9,10$) so that 
for any multi-index $\alpha$ with $|\alpha|\leq d_0$, $0<s<t$, and $\ell=1,\ldots,d$,
\begin{align*}
&\int_{\fR^d} \left|D^\alpha_{\xi}\left(\cF[q_1(s,t,\cdot)](\xi) \right)  \right| d\xi
+\int_{\fR^d} \left|D^\alpha_{\xi}\left( \cF[q_{2,\ell}(s,t,\cdot)](\xi) \right)  \right| d\xi \\
&\quad +\int_{\fR^d} \left|D^\alpha_{\xi}\left( \cF[q_3(s,t,\cdot)](\xi) \right)  \right| d\xi \leq N.
\end{align*}
\end{lemma}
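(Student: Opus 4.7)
The pointwise estimate \eqref{903 1} in the paragraph preceding the lemma already gives
$$
|D^\alpha_\xi \cF[q_1(s,t,\cdot)](\xi)| \leq N\, |\xi|^{\tilde\delta_2 - |\alpha|}\, \exp(-N^{-1}|\xi|^{\tilde\delta_1}),
$$
uniformly in $0<s<t$, so for $q_1$ everything reduces to integrating this bound in $\xi$. My plan is to derive analogous pointwise bounds for $\cF[q_{2,\ell}]$ and $\cF[q_3]$ by applying Leibniz and the chain rule, using the rescaling by $a_{t-s}$ to keep the constants independent of $s,t$, and then to verify integrability in $\xi$.

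For $\cF[q_{2,\ell}](\xi) = \xi^\ell \cF[q_1(s,t,\cdot)](\xi)$, Leibniz's rule gives
$$
D^\alpha_\xi \cF[q_{2,\ell}](\xi) = \xi^\ell\, D^\alpha_\xi \cF[q_1](\xi) + \alpha_\ell\, D^{\alpha-e_\ell}_\xi \cF[q_1](\xi),
$$
where the second term is dropped if $\alpha_\ell = 0$. Plugging \eqref{903 1} in yields
$|D^\alpha_\xi \cF[q_{2,\ell}](\xi)| \leq N\, |\xi|^{\tilde\delta_2 + 1 - |\alpha|}\, \exp(-N^{-1}|\xi|^{\tilde\delta_1})$.
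For $q_3$, I would write $\cF[q_3](\xi) = (t-s)\,\Psi(t, a_{t-s}\xi)\,\cF[q_1](\xi)$ and apply Leibniz once more. The chain rule together with \eqref{Psi as 2} gives, for each multi-index $\beta$ with $|\beta|\leq d_0$,
$$
|D^\beta_\xi[\Psi(t, a_{t-s}\xi)]| = a_{t-s}^{|\beta|}\,|(D^\beta \Psi)(t, a_{t-s}\xi)| \leq N\, \psi(|a_{t-s}\xi|^2)\, |\xi|^{-|\beta|}.
$$
Multiplying by $(t-s)$ and invoking Lemma \ref{psi lem} converts the factor $(t-s)\psi(|a_{t-s}\xi|^2)$ into $N|\xi|^{\tilde\delta_2}$. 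Combining with \eqref{903 1} via Leibniz, I obtain
$$
|D^\alpha_\xi \cF[q_3](\xi)| \leq N\, |\xi|^{2\tilde\delta_2 - |\alpha|}\, \exp(-N^{-1}|\xi|^{\tilde\delta_1}),
$$
again with $N$ independent of $s,t$ thanks to the rescaling by $a_{t-s}$ (the exponent factor $\exp(\int_s^t \Re\Psi(r, a_{t-s}\xi)\,dr) \leq \exp(-N^{-1}|\xi|^{\tilde\delta_1})$ coming from Corollary \ref{Psi cor}).

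It remains to show each of the three pointwise bounds integrates in $\xi$ uniformly. Splitting into $|\xi|<1$ and $|\xi|\geq 1$ and using the piecewise definitions of $\tilde\delta_1,\tilde\delta_2$, one gets (say, for the worst, i.e.\ $q_3$, case)
$$
\int_{\fR^d} |\xi|^{2\tilde\delta_2 - |\alpha|}\, e^{-N^{-1}|\xi|^{\tilde\delta_1}}\, d\xi \leq \int_{|\xi|<1} |\xi|^{4\delta_4 - |\alpha|}\,d\xi + \int_{|\xi|\geq 1} |\xi|^{4\delta_5 - |\alpha|}\, e^{-N^{-1}|\xi|^{2\delta_4}}\, d\xi.
$$
The second integral is finite by the super-polynomial decay of the exponential. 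The first is finite whenever $|\alpha| < d + 4\delta_4$, which is automatic since $|\alpha|\leq d_0 = \lfloor d/2\rfloor + 1$ and $\delta_4 > 0$; the analogous (weaker) integrability conditions for the $q_1$ and $q_{2,\ell}$ bounds hold for the same reason.

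\textbf{Main obstacle.} The bookkeeping in the Leibniz/Fa\`a-di-Bruno chain rule expansion (already used to establish \eqref{903 1}) is mechanical but painful; the substantive point throughout is that the dilation $\xi \mapsto a_{t-s}\xi$ exactly trades the loss of powers of $a_{t-s}$ from chain rule against the $|a_{t-s}\xi|^{-|\beta|}$ in \eqref{main as 4}/\eqref{Psi as 2} and in \eqref{main as 2}/\eqref{psi dif}, so that no residual $s,t$-dependence survives. Once this cancellation is in hand, the restriction $|\alpha|\leq d_0$ is precisely what is needed to absorb the negative powers of $|\xi|$ at the origin.
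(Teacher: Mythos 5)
Your proposal is correct and follows essentially the same route as the paper: derive pointwise bounds of the form $N\,|\xi|^{a-|\alpha|}\exp(-N^{-1}|\xi|^{\tilde\delta_1})$ for each of the three Fourier transforms (via Leibniz applied to the factorizations $\cF[q_{2,\ell}]=\xi^\ell\,\cF[q_1]$ and $\cF[q_3]=(t-s)\,\Psi(t,a_{t-s}\cdot)\,\cF[q_1]$, with the $a_{t-s}$-scaling exactly cancelling all $s,t$-dependence), and then integrate in $\xi$.

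One small slip in the final paragraph: you label $q_3$ as the worst case for integrability near the origin, but it is actually $q_1$, since near $\xi=0$ one has $\tilde\delta_2=2\delta_4$ and the $q_1$ bound carries the smallest exponent $|\xi|^{2\delta_4-|\alpha|}$, whereas $q_3$ carries the larger $|\xi|^{4\delta_4-|\alpha|}$; correspondingly, the integrability conditions for $q_1$ and $q_{2,\ell}$ are \emph{stronger}, not weaker, than the one you verified. This does not damage the proof, because the same chain of inequalities $|\alpha|\leq d_0=\lfloor d/2\rfloor+1\leq d<d+2\delta_4$ settles all three cases; it is only the ``worst''/``weaker'' labels that are reversed.
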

\begin{proof}
The first term 
$$
\int_{\fR^d} \left|D^\alpha_{\xi}\left(\cF[q_1(s,t,\cdot)](\xi) \right)  \right| d\xi
$$
is easily controlled by \eqref{903 1}.
Indeed, since
$$
\left||\xi|^{ \tilde \delta_2 -|\alpha|} \exp \left(-N^{-1}|\xi|^{\tilde \delta_1} \right)   \right|
\leq N\left||\xi|^{ \tilde \delta_2 -d_0} \exp \left(-(2N)^{-1}|\xi|^{\tilde \delta_1} \right)   \right|
$$
and the latter function is integrable with respect to $\xi$, we have
$$
\int_{\fR^d} \left|D^\alpha_{\xi}\left(\cF[q_1(s,t,\cdot)](\xi) \right)  \right| d\xi \leq N.
$$
The other two terms are similarly controlled by the inequalities
$$
\left|D^\alpha_{\xi}\left( \cF[q_{2,\ell}(s,t,\cdot)](\xi) \right)  \right|
\leq  N\left||\xi|^{ \ell+\tilde \delta_2 -d_0} \exp \left(-(2N)^{-1}|\xi|^{\tilde \delta_1} \right)   \right|
$$
and
$$
\left|D^\alpha_{\xi}\left( \cF[q_3(s,t,\cdot)](\xi) \right)  \right|
\leq  N\left||\xi|^{ 2\tilde \delta_2 -d_0} \exp \left(-(2N)^{-1}|\xi|^{\tilde \delta_1} \right)   \right|.
$$
 The lemma is proved.
\end{proof}

Note that for any $f \in L_1(\fR^d)$,
\begin{align*}
\sup_{x \in \fR^d} \left|\cF^{-1}(f)(x)\right|  \leq N(d)\|f\|_{L_1(\fR^d)}.
\end{align*}
Thus by Lemma \ref{902 lem 1} with $\alpha=0$,  there exists a constant 
$N=N(d,\delta_k,N_j)$ ($k=4,5,6$ and $j=7,8,9,10$)
so that for any $t>s$, $x \in \fR^d$, $\ell=1,\ldots,d$,
\begin{align}
						\label{ker bd eq}
\left|q_1(s,t, x)\right|
+\left| q_{2,\ell}(s,t, x)\right|
+| q_3(s,t, x)| \leq N.
\end{align}
\begin{lemma}
						\label{ker bd lem 2}
Let $\varepsilon \in \left[0,2\delta_4 +\frac{d}{2} - (d_0-1)\right)$. 
Then, there exists a constant 
$N=N(d,\delta_k,N_j)$ ($k=4,5,6$ and $j=7,8,9,10$)
so that for any multi-index $\alpha$ with $|\alpha|\leq d_0-1$, $0<s<t$, and $\ell=1,\ldots,d$,
\begin{align*}
&\int_{\fR^d} \left||\xi|^{-\varepsilon}D^\alpha_{\xi}\left(\cF[q_1(s,t,\cdot)](\xi) \right)  \right|^2 d\xi
+\int_{\fR^d} \left||\xi|^{-\varepsilon}D^\alpha_{\xi}\left( \cF[q_{2,\ell}(s,t,\cdot)](\xi) \right)  \right|^2 d\xi\\
&\quad+\int_{\fR^d} \left||\xi|^{-\varepsilon}D^\alpha_{\xi}\left( \cF[q_3(s,t,\cdot)](\xi) \right)  \right|^2 d\xi 
\leq N.
\end{align*}
\end{lemma}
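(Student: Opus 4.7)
The plan is to upgrade the pointwise Fourier-side bounds used in Lemma~\ref{902 lem 1} to bounds with a sharp $|\xi|^{-|\alpha|}$ polynomial factor (rather than the coarse $|\xi|^{-d_0}$), then feed them into the weighted $L_2$ integral and perform a straightforward split at $|\xi|=1$. The condition on $\varepsilon$ will come out exactly from the integrability at the origin in the worst case $|\alpha|=d_0-1$, $q=q_1$.

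First I would establish, by the same chain-rule argument that produced (\ref{903 1}), the three pointwise estimates
\begin{align*}
&\bigl|D^\alpha_{\xi}\cF[q_1(s,t,\cdot)](\xi)\bigr| \leq N\,|\xi|^{\tilde\delta_2-|\alpha|}\exp\bigl(-N^{-1}|\xi|^{\tilde\delta_1}\bigr),\\
&\bigl|D^\alpha_{\xi}\cF[q_{2,\ell}(s,t,\cdot)](\xi)\bigr| \leq N\,|\xi|^{1+\tilde\delta_2-|\alpha|}\exp\bigl(-N^{-1}|\xi|^{\tilde\delta_1}\bigr),\\
&\bigl|D^\alpha_{\xi}\cF[q_3(s,t,\cdot)](\xi)\bigr| \leq N\,|\xi|^{2\tilde\delta_2-|\alpha|}\exp\bigl(-N^{-1}|\xi|^{\tilde\delta_1}\bigr),
\end{align*}
for $|\alpha|\le d_0-1$ and $\xi\ne 0$. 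For $q_1$ this is (\ref{903 1}); for $q_{2,\ell}$ the extra $\xi^\ell$ contributes a net $|\xi|^{+1}$ (whether one derivative is spent on $\xi^\ell$ or all derivatives fall on the $\psi\!\cdot\!\exp$ factor), and for $q_3$ the extra $(t-s)\Psi(t,a_{t-s}\xi)$ contributes an additional $|\xi|^{\tilde\delta_2}$ via (\ref{Psi as 2}) and Lemma~\ref{psi lem}, while its derivatives behave like those of $\psi(|a_{t-s}\xi|^2)$ by (\ref{psi dif})--(\ref{Psi as 2}) and Corollary~\ref{Psi cor}.

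With these bounds in hand, for each $q\in\{q_1,q_{2,\ell},q_3\}$ I would split
\[
\int_{\fR^d}\bigl||\xi|^{-\varepsilon}D^\alpha_\xi \cF[q](\xi)\bigr|^2\,d\xi
=\int_{|\xi|<1}+\int_{|\xi|\ge 1}.
\]
On $\{|\xi|\ge 1\}$, one has $\tilde\delta_1=2\delta_4>0$, so the exponential factor $\exp(-2N^{-1}|\xi|^{2\delta_4})$ beats every polynomial and the integral is bounded by a constant depending only on $d,\delta_4,\delta_5,N_7,\dots,N_{10}$ (independent of $s,t,\varepsilon,\alpha$ within the allowed ranges). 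On $\{|\xi|<1\}$, one has $\tilde\delta_2=2\delta_4$, so for $q_1$ the integrand is bounded by $N\,|\xi|^{2(2\delta_4-|\alpha|)-2\varepsilon}$, integrable on the unit ball iff
\[
2(2\delta_4-|\alpha|)-2\varepsilon+d>0 \quad\Longleftrightarrow\quad \varepsilon<2\delta_4-|\alpha|+\tfrac{d}{2}.
\]
The worst case is $|\alpha|=d_0-1$, giving exactly the hypothesis $\varepsilon<2\delta_4+d/2-(d_0-1)$. The $q_{2,\ell}$ and $q_3$ pieces carry an extra $|\xi|^{2}$ and $|\xi|^{4\delta_4}$ respectively in the near-origin integrand, so they are strictly more integrable and handled by the same inequality.

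The only nontrivial point is justifying the sharp chain-rule bound for $D^\alpha_\xi\cF[q]$; this is just bookkeeping using Fa\`a di Bruno to expand derivatives of $\psi(|a_{t-s}\xi|^2)\exp(\int_s^t\Psi(r,a_{t-s}\xi)\,dr)$ and then inserting (\ref{psi dif}), (\ref{Psi as 2}), Lemma~\ref{psi lem}, and Corollary~\ref{Psi cor}, exactly as in the proof of (\ref{903 1}). Once that is in place, the remaining step is purely the elementary integrability calculus above, which produces the stated threshold on $\varepsilon$ with a constant $N$ depending only on $d$, the $\delta_k$'s and $N_j$'s, uniformly in $0<s<t$ and $\ell$.
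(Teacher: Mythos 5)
Your proof is correct and takes essentially the same route as the paper: invoke the pointwise Fourier-side bound \eqref{903 1} (and its analogues for $q_{2,\ell}$, $q_3$), then observe that the exponential factor kills the integral for $|\xi|\geq 1$ while near the origin the exponent $2\tilde\delta_2-2|\alpha|-2\varepsilon=4\delta_4-2|\alpha|-2\varepsilon>-d$ by the hypothesis on $\varepsilon$, with the worst case $|\alpha|=d_0-1$. The only slight mismatch is your framing remark about "upgrading" the coarse $|\xi|^{-d_0}$ bound from Lemma \ref{902 lem 1}: the paper's proof of Lemma \ref{ker bd lem 2} already applies \eqref{903 1} directly rather than the coarser inequality, so no upgrade is needed -- you end up reproducing the paper's argument verbatim.
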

\begin{proof}
Because of the similarity, we only show 
$$
\int_{\fR^d} \left||\xi|^{-\varepsilon}D^\alpha_{\xi}\left( \cF[q_1(s,t,\cdot)](\xi) \right)  \right|^2 d\xi \leq N.
$$
Due to (\ref{903 1}),
\begin{align*}
 \left||\xi|^{-\varepsilon}D^\alpha_{\xi}\left( \cF[q_1(s,t,\cdot)](\xi) \right)  \right|^2 
 \leq N\left||\xi|^{ 2\tilde \delta_2 -2|\alpha|-2\varepsilon} \exp \left(-2N^{-1}|\xi|^{\tilde \delta_1} \right)   \right|.
\end{align*}
Therefore, 
$$
\left||\xi|^{-\varepsilon}D^\alpha_{\xi}\left( \cF[q_1(s,t,\cdot)](\xi) \right)  \right|^2 
$$
is integrable with respect to $\xi$ uniformly for $0<s<t$ since
$$
2\tilde \delta_2 -2|\alpha|-2\varepsilon > 4\delta_4 - 2(d_0-1) -2\varepsilon > -d.
$$
The lemma is proved.
\end{proof}

\begin{lemma}
						\label{ker bd lem 3}
There exists a constant $N=N(d,\delta_k,N_j)>0$ ($k=4,5,6$ and $j=7,8,9,10$) so that 
for all $c>0$, multi-index $|\alpha|\leq d_0$, $0<s<t$, and $\ell=1,\ldots,d$,
\begin{align*}
&\int_{|\xi| \geq c} \left|D^\alpha_{\xi}\left(\cF[q_1(s,t,\cdot)](\xi) \right)  \right|^2 d\xi
+\int_{|\xi| \geq c} \left|D^\alpha_{\xi}\left(\cF[q_{2,\ell}(s,t,\cdot)](\xi) \right)  \right|^2 d\xi \\
&\quad +\int_{|\xi| \geq c} \left|D^\alpha_{\xi}\left( \cF[q_3(s,t,\cdot)](\xi) \right)  \right|^2 d\xi 
\leq N(1+c^{4\delta_2-2d_0 +d}).
\end{align*}
\end{lemma}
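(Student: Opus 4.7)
The strategy is to square the pointwise Fourier-side estimates already produced in the proofs of Lemmas \ref{902 lem 1} and \ref{ker bd lem 2} and integrate directly in polar coordinates, splitting the region $\{|\xi|\ge c\}$ at $|\xi|=1$ in order to handle the piecewise definitions of $\tilde\delta_1$ and $\tilde\delta_2$. Recall that by \eqref{903 1} and the estimates in the proof of Lemma \ref{ker bd lem 2}, we have the uniform bounds
$$
\bigl|D^\alpha_\xi\cF[q_1(s,t,\cdot)](\xi)\bigr|^2 \le N\,|\xi|^{2\tilde\delta_2-2|\alpha|}\exp\bigl(-2N^{-1}|\xi|^{\tilde\delta_1}\bigr),
$$
and analogous estimates for $q_{2,\ell}$ and $q_3$ with polynomial exponents $2\tilde\delta_2+2-2|\alpha|$ and $4\tilde\delta_2-2|\alpha|$ respectively, all uniform in $0<s<t$ and valid for $|\alpha|\le d_0$.

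\emph{Case $c\ge 1$.} The entire integration domain lies in $\{|\xi|\ge 1\}$, where $\tilde\delta_1=2\delta_4$ and $\tilde\delta_2=2\delta_5$. The Gaussian-type factor $\exp(-2N^{-1}|\xi|^{2\delta_4})$ dominates the polynomial prefactors and renders each of the three squared integrands Lebesgue integrable over all of $\fR^d$ with an absolute-constant bound. Hence the integral over $\{|\xi|\ge c\}$ is bounded by an absolute constant, which is trivially absorbed by the right-hand side $N(1+c^{4\delta_4-2d_0+d})$ of the claim.

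\emph{Case $c<1$.} Decompose the domain as $\{c\le|\xi|<1\}\cup\{|\xi|\ge 1\}$; the outer piece is bounded by a constant as in the previous case. On the annulus $\{c\le|\xi|<1\}$ we are in the small-$\xi$ branch, so $\tilde\delta_2=2\delta_4$ and the exponential factor is $\le 1$. For $q_1$, passing to polar coordinates yields
$$
\int_{c\le|\xi|<1}|\xi|^{4\delta_4-2|\alpha|}\,d\xi \;\le\; N\int_c^1 r^{4\delta_4-2|\alpha|+d-1}\,dr \;\le\; N\bigl(1+c^{4\delta_4-2|\alpha|+d}\bigr),
$$
where the last step handles uniformly the cases in which the integrated exponent is nonnegative (giving a constant) or negative (giving the stated negative power of $c$). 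Taking $|\alpha|\le d_0$ gives the claimed bound for $q_1$. For $q_{2,\ell}$ and $q_3$ the polynomial prefactor carries strictly more powers of $|\xi|$, so the resulting exponent on $c$ is strictly greater than $4\delta_4-2d_0+d$; since $c<1$, the term $c^{4\delta_4-2d_0+d}$ dominates and the same bound applies.

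The hard part is not any single estimate but the bookkeeping enforced by the piecewise definitions of $\tilde\delta_1,\tilde\delta_2$: one must carefully use the small-$\xi$ branch on the annulus (where the $c$-dependence is generated) and the large-$\xi$ branch on the outer region (where exponential decay produces a uniform bound). Once this split is in place, everything reduces to a one-dimensional radial integral of a pure power, so no further analytic input is needed beyond the pointwise bounds already at hand.
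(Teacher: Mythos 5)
Your argument is correct and follows the same route as the paper: square the pointwise Fourier-side bound \eqref{903 1}, observe that on $\{|\xi|\ge 1\}$ the exponential factor gives a uniform constant and that on $\{c\le|\xi|<1\}$ (where $\tilde\delta_2=2\delta_4$) the radial power integral produces $c^{4\delta_4-2d_0+d}$, and note that $q_{2,\ell}$ and $q_3$ carry extra nonnegative powers of $|\xi|$ so they are dominated. (You also correctly use $\delta_4$ where the statement's displayed exponent reads $\delta_2$; the paper's own one-line proof uses $\delta_4$ as well.)
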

\begin{proof}
Due to similarity, we only estimate the first term above.

By (\ref{903 1}),
\begin{align*}
 \left|D^\alpha_{\xi}\left( \cF[q_1(s,t,\cdot)](\xi) \right)  \right|^2 
 &\leq N|\xi|^{ 2\tilde \delta_2 -2|\alpha|} \exp \left(-N^{-1}|\xi|^{\tilde \delta_1} \right)\\
 &\leq N|\xi|^{ 4\delta_4 -2d_0} \exp \left(-(2N)^{-1}|\xi|^{\tilde \delta_1} \right) .
\end{align*}
Therefore
\begin{align*}
\int_{|\xi| \geq c} \left|D^\alpha_{\xi}\left(\cF[q_1(s,t,\cdot)](\xi) \right)  \right|^2 d\xi
\leq N(1+c^{ 4\delta_4 -2d_0+d}).
\end{align*}
The lemma is proved.
\end{proof}

\begin{lemma}   \label{ker int fin}
Let $0< \delta < \left(\delta_4 \wedge \frac{1}{2} \right) $.
Then there exists a constant $N=N(d,\delta,\delta_k,N_j)$ ($k=4,5,6$ and $j=7,8,9,10$)
so that for any $0<s<t$ and $\ell=1,\ldots,d$
\begin{align}           \label{lemma 531}
 \int_{\fR^d} \left| |x|^{\frac{d}{2} +\delta } q_1(s,t, x) \right|^2~dx \leq N,
\end{align}
\begin{align}               \label{lemma 532}
 \int_{\fR^d} \left| |x|^{\frac{d}{2}+\delta}  q_{2,\ell}(s,t, x) \right|^2~dx \leq N,
\end{align}
and
\begin{align}               \label{lemma 533}
\int_{\fR^d} \left| |x|^{\frac{d}{2}+\delta}q_3(s,t, x) \right|^2~dx \leq N.
\end{align}
\end{lemma}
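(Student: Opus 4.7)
I prove (\ref{lemma 531}); the estimates (\ref{lemma 532}) and (\ref{lemma 533}) follow by the same argument applied to the Fourier symbols of $q_{2,\ell}$ and $q_3$, which admit pointwise derivative estimates of the same form as (\ref{903 1}) (differing only by an extra factor of $|\xi|$ or $|\xi|^{\tilde\delta_2}$ in the polynomial prefactor, which does not affect the analysis). Throughout, let $F:=\cF[q_1(s,t,\cdot)]$.

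The plan is first to convert the weighted $L_2$-bound on $q_1$ into a Sobolev-type bound on $F$ via Plancherel, and then to estimate the resulting fractional Sobolev seminorm using the pointwise derivative bound (\ref{903 1}) together with Lemmas \ref{ker bd lem 2} and \ref{ker bd lem 3}. By Plancherel, $\int_{\fR^d}|x^\alpha q_1|^2\,dx = (2\pi)^{-d}\int_{\fR^d}|D^\alpha F|^2\,d\xi$ for every integer multi-index $\alpha$; summing over $|\alpha|=d_0-1$ and applying Lemma \ref{ker bd lem 2} with $\varepsilon=0$ yields $\int|x|^{2(d_0-1)}|q_1|^2\,dx \le N$ uniformly in $0<s<t$. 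Since $s:=d/2+\delta$ is generically non-integer, I write $s=k+\sigma$ with $k=d_0-1=\lfloor d/2\rfloor$ and $\sigma=\{d/2\}+\delta\in(0,1)$, where the upper bound $\sigma<1$ uses the hypothesis $\delta<1/2$. Invoking the Gagliardo characterization of the homogeneous Sobolev seminorm of order $s$, it suffices to establish the uniform bound
\[
J:=\sum_{|\alpha|=d_0-1}\iint_{\fR^{d}\times \fR^d} \frac{|D^\alpha F(\xi)-D^\alpha F(\eta)|^2}{|\xi-\eta|^{d+2\sigma}}\,d\xi\,d\eta \le N.
\]

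I split $J=J_{\rm far}+J_{\rm near}$ into contributions from $\{|\xi-\eta|>1\}$ and $\{|\xi-\eta|\le 1\}$. The far part is immediate: applying $(a-b)^2\le 2(a^2+b^2)$ together with the convergence of $\int_{|\rho|>1}|\rho|^{-d-2\sigma}\,d\rho$ (since $d+2\sigma>d$) reduces $J_{\rm far}$ to $\|D^\alpha F\|_{L_2}^2$, which Lemma \ref{ker bd lem 2} bounds. For the near part, I further decompose according to whether both $|\xi|,|\eta|\ge 2|\xi-\eta|$ (the \emph{regular} sub-region) or at least one of $|\xi|,|\eta|<2|\xi-\eta|$ (the \emph{singular} sub-region). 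In the regular sub-region, every point $z$ on the segment $[\xi,\eta]$ satisfies $|z|\ge|\xi-\eta|$, so the integral mean-value inequality $|D^\alpha F(\xi)-D^\alpha F(\eta)|^2 \le |\xi-\eta|^2\int_0^1|\nabla D^\alpha F(\eta+t(\xi-\eta))|^2\,dt$, combined with the pointwise estimate (\ref{903 1}) for the $d_0$-th derivative and a change of variables $\rho=\xi-\eta$, $w=\eta+t\rho$, reduces the contribution to a convergent double integral in $\rho$ and $w$. In the singular sub-region, the pointwise bound (\ref{903 1}) applied directly to $D^\alpha F$ gives $|D^\alpha F(\xi)|\le N|\xi|^{2\delta_4-(d_0-1)}$ on the support; integrating the kernel $|\xi-\eta|^{-d-2\sigma}$ over $\eta$ in the annulus $|\xi|/2\le|\xi-\eta|\le 1$ produces a factor comparable to $|\xi|^{-2\sigma}$, leaving a scalar integral in $\xi$ that converges precisely when $\delta<2\delta_4$, which is implied by $\delta<\delta_4$.

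The principal obstacle is the near-diagonal estimate when the $d_0$-th derivative of $F$ is not in $L_2$ near $\xi=0$ (the regime $4\delta_4+d<2d_0$). There the mean-value inequality cannot be applied uniformly on the whole near region, and one must split into the regular and singular sub-regions as above. The condition $\delta<\delta_4\wedge 1/2$ plays a twofold role: $\delta<1/2$ keeps the Gagliardo kernel integrable across the diagonal, while $\delta<\delta_4$ is exactly what is needed to make the singular sub-region finite.
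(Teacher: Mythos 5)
Your proposal is correct and follows essentially the same strategy as the paper: pass to the Fourier side by Plancherel, split the exponent $d/2+\delta$ into the integer part $d_0-1$ and a fractional remainder $\sigma\in(0,1)$, and estimate the resulting fractional Sobolev seminorm of $D^{d_0-1}\cF[q_1]$ by decomposing into far-from-diagonal, near-diagonal regular, and near-diagonal singular regions, using the pointwise symbol estimate (\ref{903 1}) together with Lemmas \ref{ker bd lem 2} and \ref{ker bd lem 3}. The one genuine difference is presentational but not trivial: the paper writes the seminorm via the pointwise integral representation of $(-\Delta)^{\varepsilon/2}$ applied to $D_{\xi^\ell}^{d_0-1}\cF[q_1]$, squares, and takes the $L_2(\xi)$ norm, which forces it to pass Minkowski's inequality and H\"older's inequality through the inner $\eta$-integral (see the paper's treatment of $\cI_{2,1}$ and $\cI_{2,2}$); you instead use the Gagliardo double-integral form of the seminorm, so the squaring already sits under both integrals and the near-diagonal regular region can be handled by a direct change of variables $(\xi,\eta)\mapsto(\rho,w)$ and Fubini. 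This makes the regular-region estimate cleaner (no Minkowski step), and as a side effect you only need $\delta<2\delta_4$ there, whereas the paper's H\"older split in $\cI_{2,2}$ genuinely uses $\delta<\delta_4$; of course both fall within the stated hypothesis $\delta<\delta_4\wedge 1/2$. One small imprecision worth correcting: your closing remark that ``$\delta<\delta_4$ is exactly what is needed to make the singular sub-region finite'' overstates it --- your own computation shows $\delta<2\delta_4$ is the precise threshold in the singular region.
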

\begin{proof}
As before, we only prove (\ref{lemma 531}) since the  proofs of (\ref{lemma 532}) and (\ref{lemma 533}) are similar.

Note that it suffices to show that for each $\ell=1,\ldots,d$,
\begin{align}
								\label{808 1}
 \int_{\fR^d} \left| (ix^{\ell})^{\frac{d}{2} +\delta }q_1(s,t, x) \right|^2~dx \leq N,
\end{align}
where $i$ is the imaginary number, i.e. $i^2=-1$.
By a property of the Fourier inverse transform,
$$
(ix^\ell)^{d_0-1}\cF^{-1}\left[f(\xi)\right](x)
=(-1)^{d_0-1}\cF^{-1}\left[D^{d_0-1}_{\xi^\ell}f(\xi)\right](x).
$$
Hence the left hand side of (\ref{808 1}) is equal to
\begin{align}
					\notag
& \int_{\fR^d} \left| (ix^\ell)^{\frac{d}{2} +\delta -(d_0-1)} \cF^{-1}\left( D_{\xi^\ell}^{d_0-1} \cF\left[q_1(s,t,\cdot) \right](\xi)\right)(x) \right|^2~dx \\
					\label{808 2} 
& \leq \int_{\fR^d} \left| |x|^{\frac{d}{2} +\delta -(d_0-1)} \cF^{-1}\left( D_{\xi^\ell}^{d_0-1}  \cF \left[q_1(s,t,\cdot) \right] (\xi) \right)(x) \right|^2~dx.
\end{align}
Set
$$
\varepsilon:=\varepsilon(\delta)=\frac{d}{2} +\delta -(d_0-1).
$$
Then by the Plancherel theorem, the right hand side of (\ref{808 2})  equals 
\begin{align}
						\label{808 3}
N(d)\int_{\fR^d} \left| (-\Delta)^{\varepsilon/2} \left(D_{\xi^\ell}^{d_0-1} q_1(s,t,\xi) \right) \right|^2~d\xi.
\end{align}
Obviously, $\varepsilon \in \left(0,1 \wedge \left(2\delta_4 + \frac{d}{2} -(d_0-1)\right) \right)$. Using the integral representation of the Fractional Laplacian operator 
$(-\Delta)^{\varepsilon/2}$ we get
\begin{align*}
(-\Delta)^{\varepsilon/2} (D_{\xi^\ell}^{d_0-1} q_1(s,t,\xi))
=N \int_{\fR^d} \frac{ D_{\xi^\ell}^{d_0-1} q_1(s,t,\xi+\eta)-D_{\xi^\ell}^{d_0-1} q_1(s,t,\xi) }{|\eta|^{d+\varepsilon}} d\eta.
\end{align*}
We divide $(-\Delta)^{\varepsilon/2} (D_{\xi^\ell}^{d_0-1} q_1(s,t,\xi))$ into two terms:
\begin{align*}
&N \int_{|\eta| \geq 1} \frac{ D_{\xi^\ell}^{d_0-1} q_1(s,t,\xi+\eta)-D_{\xi^\ell}^{d_0-1} q_1(s,t,\xi) }{|\eta|^{d+\varepsilon}} d\eta \\
&\quad+N \int_{|\eta| < 1} \frac{ D_{\xi^\ell}^{d_0-1} q_1(s,t,\xi+\eta)-D_{\xi^\ell}^{d_0-1} q_1(s,t,\xi) }{|\eta|^{d+\varepsilon}} d\eta
=: \cI_1(s,t,\xi) + \cI_2(s,t,\xi).
\end{align*}
By Minkowski's inequality and Lemma \ref{ker bd lem 2},
\begin{align*}
\left[\int_{\fR^d} \left|\cI_1(s,t,\xi) \right|^2 d\xi \right]^{1/2}
&\leq 2 \left\| D_{\xi^\ell}^{d_0-1} q_1(s,t,\cdot)\right\|_{L_2(\fR^d)}
\int_{|\eta| \geq 1} \frac{1}{|\eta|^{d+\varepsilon}} d\eta \leq N<\infty.
\end{align*}
We split $\cI_2$ into $\cI_{2,1}$, $\cI_{2,2}$, and $\cI_{2,3}$, where
\begin{align*}
\cI_{2,1}(s,t,\xi):=
\int_{|\eta| < 1} 1_{|\eta| < \frac{|\xi|}{2}}\frac{ D_{\xi^\ell}^{d_0-1} q_1(s,t,\xi+\eta)-D_{\xi^\ell}^{d_0-1} q_1(s,t,\xi) }{|\eta|^{d+\varepsilon}} d\eta,
\end{align*}
\begin{align*}
\cI_{2,2}(s,t,\xi):=\int_{|\eta| < 1} 1_{|\eta| \geq \frac{|\xi|}{2}} \frac{ D_{\xi^\ell}^{d_0-1} q_1(s,t,\xi+\eta)}{|\eta|^{d+\varepsilon}} d\eta,
\end{align*}
and
\begin{align*}
\cI_{2,3}(s,t,\xi):=-\int_{|\eta| < 1} 1_{|\eta| \geq \frac{|\xi|}{2}} \frac{D_{\xi^\ell}^{d_0-1} q_1(s,t,\xi) }{|\eta|^{d+\varepsilon}} d\eta.
\end{align*}
By the fundamental theorem of calculus and the Fubini theorem,
\begin{align*}
|\cI_{2,1}(s,t,\xi)| 
\leq 
\int_0^1 \int_{|\eta| < 1}  1_{|\eta| < \frac{|\xi|}{2}}\frac{ \left|\nabla D_{\xi^\ell}^{d_0-1} q_1(s,t,\xi+ \theta \eta) \right|}{|\eta|^{d+\varepsilon-1}}  d\eta d\theta .
\end{align*}
Hence by Minkowski's inequality and Lemma \ref{ker bd lem 3}, 
\begin{align*}
\|\cI_{2,1}(s,t,\cdot)\|_{L_2(\fR^d)}
&\leq 
 \int_{|\eta| < 1} \left(\int_{|\eta| < |\xi|}  \left|\nabla D_{\xi^\ell}^{d_0-1} q_1(s,t,\xi)\right|^2d\xi \right)^{1/2}\frac{1}{|\eta|^{d+\varepsilon-1}} d\eta  \\
&\leq 
N \int_{|\eta| < 1} \frac{1+|\eta|^{2\delta_4-d_0 +\frac{d}{2}}}{|\eta|^{d+\varepsilon-1}} d\eta 
\leq N
\end{align*}
since 
$2\delta_4-d_0 +\frac{d}{2}-d -\varepsilon +1 >-d$.

Note that 
if $|\xi| \geq 2$, then $\cI_{2,2}(s,t,\xi)=\cI_{2,3}(s,t,\xi)=0$ and thus we may assume $|\xi| \leq 2$.
Recalling the range of $\varepsilon$, we have
$$
\varepsilon  + \delta_4 < 2\delta_4 +\frac{d}{2} -(d_0-1).
$$
Hence  by H\"older's inequality and Lemma \ref{ker bd lem 2},
\begin{align*}
&|\cI_{2,2}(s,t,\xi)| \\
&\leq 
\left[\int_{|\eta| < 1} 1_{|\eta| \geq \frac{|\xi|}{2}} \frac{|\xi+\eta|^{2\varepsilon+2\delta_4}}{|\eta|^{2d+2\varepsilon}} d\eta\right]^{1/2}
\left[\int_{\fR^d} \left| |\xi+\eta|^{-\varepsilon -\delta_4 }D_{\xi^\ell}^{d_0-1} q_1(s,t,\xi+\eta) \right|^2 d\eta\right]^{1/2} \\
&\leq N
\left[\int_{|\eta| < 1} 1_{|\eta| \geq \frac{|\xi|}{2}}|\eta|^{-2d+2\delta_4} d\eta\right]^{1/2}
\left[\int_{\fR^d} \left| |\eta|^{-\varepsilon-\delta_4}D_{\xi^\ell}^{d_0-1} q_1(s,t,\eta) \right|^2 d\eta\right]^{1/2} \\
&\leq N \left(1+|\xi|^{-\frac{d}{2} +\delta_4}\right).
\end{align*}
Therefore we have
\begin{align*}
\|\cI_{2,2}(s,t,\cdot)\|^2_{L_2(\fR^d)}
\leq N \int_{|\xi| <2} \left(1+|\xi|^{-d+2\delta_4}\right) d\xi \leq N.
\end{align*}
Finally by Lemma \ref{ker bd lem 2} again,
\begin{align*}
\|\cI_{2,3}(s,t,\cdot)\|^2_{L_2(\fR^d)}
\leq  N \int_{|\xi| \leq 2}\left(1+|\xi|^{-d +2 \delta_4}\right)  d\xi
\leq N.
\end{align*}
Due to (\ref{808 2}) and (\ref{808 3}), combining all estimates for $\cI_1, \cI_{2,1}, \cI_{2,2}, \cI_{2,3}$, we have
(\ref{808 1}). 
The lemma is proved. 
\end{proof}

Let $q(s,t,x)$ be anyone of $q_1, q_{2,\ell}$ and $q_3$. Then by (\ref{ker bd eq}), Lemma \ref{ker int fin}, and H\"older's inequality, 
\begin{align}
        \label{eqn 11.17.1}
       & \sup_{s<t}\|q(s, t, \cdot)\|_{L_1}\nonumber \\
       & \leq \sup_{s>t} \int_{|x|\leq 1} |q(s,t,x)|dx+N\sup_{s<t} \left(\int_{|x|\geq 1}  \left||x|^{d/2+\delta}q(s,t,x)\right|^2 dx\right)^{1/2}<\infty.
\end{align}        
        
Recall
$a_t := \left(\psi^{-1}(t^{-1}) \right)^{1/2}$ and denote
$$
\psi (\Delta) p(r,t,z)
:=\psi (\Delta) \left[ p(r,t,\cdot) \right](z)
= \cF^{-1}\left[\psi(|\xi|^2) \cF \left[p(r,t,\cdot) \right] (\xi) \right](z).
$$
\begin{lemma}           
					\label{main cor}
Let $0< \delta < \left(\delta_4 \wedge \frac{1}{2} \right)$.
Then there exists a constant $N=N(d,\delta,\delta_k,N_j)$ ($k=4,5,6$ and $j=7,8,9,10$)
such that for all $t>s>a>0$, $c>0$,
\begin{align}
					\label{m c 1}
\int_s^t \int_{ |z| \geq c} |\psi (\Delta) p(r,t,z) (z)| ~dz dr 
\leq N \left(a_{t-s}c\right)^{-\delta},
\end{align}
\begin{align}       
					\label{m c 2}
\int_{0}^a \int_{\fR^d} \big|\psi(\Delta)p(r,t, z+h)-\psi(\Delta)p(r,t, z)\big| ~dz  dr 
\leq N |h| a_{t-a},
\end{align}
and
\begin{align}
						\label{m c 3}
\int_{0}^a  \int_{\fR^d} |\psi(\Delta)p(r,t, z)-  \psi(\Delta)p(r,s, z)| ~dz dr
\leq N(t-s)(s -a)^{-1}.
\end{align}
\end{lemma}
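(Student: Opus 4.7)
The strategy is to reduce each of the three estimates to bounds on the rescaled kernels $q_1$, $q_{2,\ell}$, $q_3$ via the scaling identities \eqref{q1 rela}, \eqref{rela p q1}, \eqref{rela p q2}, and then to evaluate the resulting time integrals via the change of variables $u = t-r$ followed by $v = a_u^2 = \psi^{-1}(u^{-1})$.

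For \eqref{m c 1}, rescaling $w = a_{t-r}z$ in \eqref{q1 rela} gives
\begin{align*}
\int_{|z|\geq c}|\psi(\Delta)p(r,t,z)|\,dz = (t-r)^{-1}\int_{|w|\geq a_{t-r}c}|q_1(r,t,w)|\,dw.
\end{align*}
A tail estimate for $q_1$ follows from H\"older's inequality and Lemma \ref{ker int fin}: $\int_{|w|\geq R}|q_1(r,t,w)|\,dw \leq NR^{-\delta}$ for every $R>0$. Applying this with $R = a_{t-r}c$ and substituting first $u = t-r$ and then $v = \psi^{-1}(u^{-1})$, so that $u = 1/\psi(v)$ and $du = -\psi'(v)\psi(v)^{-2}\,dv$, reduces the time integral to
\begin{align*}
\int_0^{t-s}u^{-1}(a_u)^{-\delta}\,du = \int_{a_{t-s}^2}^\infty v^{-\delta/2}\frac{\psi'(v)}{\psi(v)}\,dv \leq N\int_{a_{t-s}^2}^\infty v^{-\delta/2-1}\,dv \leq N(a_{t-s})^{-\delta},
\end{align*}
where we used $|\psi'(v)/\psi(v)|\leq N/v$, a consequence of \eqref{psi dif}.

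For \eqref{m c 2}, by the fundamental theorem of calculus and translation invariance of $dz$, the left-hand side is bounded by $|h|\int_{\fR^d}|\nabla\psi(\Delta)p(r,t,w)|\,dw$. The scaling identity \eqref{rela p q1} together with $\|q_{2,\ell}(r,t,\cdot)\|_{L_1}\leq N$ from \eqref{eqn 11.17.1} yields $\int_{\fR^d}|\nabla\psi(\Delta)p(r,t,w)|\,dw \leq N(t-r)^{-1}a_{t-r}$, and the same change of variables gives
\begin{align*}
\int_0^a(t-r)^{-1}a_{t-r}\,dr = \int_{t-a}^t u^{-1}a_u\,du = \int_{a_t^2}^{a_{t-a}^2} v^{1/2}\frac{\psi'(v)}{\psi(v)}\,dv \leq N(a_{t-a} - a_t) \leq N a_{t-a}.
\end{align*}
For \eqref{m c 3}, writing $\psi(\Delta)p(r,t,z)-\psi(\Delta)p(r,s,z) = \int_s^t \partial_\tau\psi(\Delta)p(r,\tau,z)\,d\tau$ and applying \eqref{rela p q2} with \eqref{eqn 11.17.1} gives $\int_{\fR^d}|\partial_\tau\psi(\Delta)p(r,\tau,z)|\,dz \leq N(\tau-r)^{-2}$, after which the remaining time integrals are elementary: $\int_s^t (\tau-r)^{-2}\,d\tau \leq (t-s)(s-r)^{-2}$ and then $\int_0^a (s-r)^{-2}\,dr \leq (s-a)^{-1}$.

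The main subtlety lies in the time integrals of \eqref{m c 1} and \eqref{m c 2}: the naive factor $(t-r)^{-1}$ is not integrable near $r=t$, and only the additional weights $(a_{t-r})^{-\delta}$ or $a_{t-r}$ produced by the rescaling render the integrals finite. The substitution $v = \psi^{-1}(u^{-1})$ converts this Dini-type singularity into an integrable negative power of $v$, and this is precisely where the quantitative weak-scaling \eqref{psi as} and the derivative bound \eqref{psi dif} on $\psi$ are used.
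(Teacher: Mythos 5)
Your proof is correct and follows essentially the same strategy as the paper's: reduce each estimate to the rescaled kernels $q_1, q_{2,\ell}, q_3$ via (\ref{q1 rela}), (\ref{rela p q1}), (\ref{rela p q2}), invoke Lemma \ref{ker int fin} and (\ref{eqn 11.17.1}) for uniform $L^1$/weighted-$L^2$ control, then integrate in time. The one place you diverge is in evaluating the time integrals: you substitute $v = a_u^2 = \psi^{-1}(u^{-1})$ so that $du = -\psi'(v)\psi(v)^{-2}\,dv$ and then control $\psi'(v)/\psi(v)$ by $N/v$ via (\ref{psi dif}), converting the Dini-type singularity into $\int v^{-\delta/2-1}\,dv$ or $\int v^{-1/2}\,dv$. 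The paper instead performs a linear rescaling $r \mapsto t-(t-s)r$ (respectively $r \mapsto t-(t-a)r$), pulls out $(a_{t-s})^{-\delta}$ or $a_{t-a}$, and controls the ratio $a_{(t-s)r}/a_{t-s}$ by the weak-scaling estimate (\ref{psi inverse}) for $\psi^{-1}$, yielding $\int_0^1 r^{-1+\delta/(2\delta_5)}\,dr$ or $\int_1^\infty r^{-1-1/(2\delta_5)}\,dr$. Both arguments are valid under Assumption \ref{psi as 1}; yours leans on the differentiability of $\psi$ and (\ref{psi dif}), while the paper's needs only the monotone weak-scaling (\ref{psi inverse}) and thus would survive dropping the $C^1$ smoothness of $\psi$ for this particular step. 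Your treatment of (\ref{m c 3}) also differs cosmetically: you integrate $\partial_\tau$ over $\tau\in(s,t)$ directly, whereas the paper parametrizes the segment by $\theta\in(0,1)$; the computations are the same.
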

\begin{proof}
(i)
By (\ref{q1 rela}), (\ref{lemma 531}), and H\"older's inequality,
\begin{align*}
&\int_{|z| \geq c} |\psi(\Delta)p(r,t, z)| ~dz \\
&=(t-r)^{-1}\int_{|z| \geq a_{t-r}c} |q_1(r,t, z)| ~dz \\
&\leq (t-r)^{-1} 
\left(\int_{|z| \geq a_{t-r}c} |z|^{-d - 2\delta} ~dz \right)^{1/2} 
\left(\int_{|z| \geq a_{t-r}c} \left||z|^{\frac{d}{2}+\delta} q_1(r,t, z)\right|^2 ~dz\right)^{1/2} \\
&\leq N (t-r)^{-1} \left(a_{t-r}c \right)^{- \delta}.
\end{align*}
Therefore by (\ref{at}) and changing the variable $r \to t-(t-s)r$,
\begin{align*}
\int_s^t \int_{|z| \geq c} |\psi(\Delta)p(r,t, z)| ~dz dr
&\leq N \int_s^t(t-r)^{-1} \left(a_{t-r}c \right)^{- \delta} dr \\
&\leq N \int_0^{1}r^{-1} \left(a_{(t-s)r}c \right)^{- \delta} dr.
\end{align*}
Thus by (\ref{psi inverse}),
\begin{align*}
\int_0^{1}r^{-1} \left(a_{(t-s)r}c \right)^{- \delta} dr
& \leq N \left(a_{t-s}c \right)^{- \delta} \int_0^{1}r^{-1} r^{\delta / (2 \delta_5)} dr 
\leq N \left(a_{t-s}c \right)^{- \delta}.
\end{align*}

(ii)
Recall
\begin{align*}  
\psi(\Delta)p_{x^\ell}(r,t, x)= (t-r)^{-1}(a_{t-r})^{d+1} q_{2,\ell}(r,t,a_{t-r}x).
\end{align*}
Using the fundamental theorem of calculus,  Fubini's theorem, and (\ref{eqn 11.17.1}), 
\begin{align*}
&\int_{0}^a \int_{\fR^d} \big| \psi(\Delta)p(t,r, z+h)-\psi(\Delta)p(r,t, z) \big| ~dz  dr  \\
&\leq |h|\int_{0}^a  \int_{\fR^d} \int^1_0 |\nabla \psi(\Delta)p(r,t, z+\theta h)| ~d\theta dz dr  \\
&\leq |h|\int_{0}^a (t-r)^{-1}a_{t-r}\sum_{\ell=1}^d \int_{\fR^d} \Big|q_{2,\ell}(r,t, z) \Big|~dz dr 
\leq N|h|\int_{0}^a (t-r)^{-1}a_{t-r} dr .
\end{align*}
Moreover, by changing the variable $r \to (t-a)r$ and (\ref{psi inverse}),
\begin{align*}
\int_{0}^a (t-r)^{-1}a_{t-r} dr 
\leq \int_{1}^\infty r^{-1}a_{(t-a)r} dr 
&=a_{t-a}\int_{1}^\infty r^{-1}\frac{a_{(t-a)r}}{a_{t-a}} dr  \\
&\leq N a_{t-a}.
\end{align*}
Hence (\ref{m c 2}) is proved. 

(iii)
By the fundamental theorem of calculus and (\ref{rela p q2}),
\begin{align*}
&|\psi(\Delta)p(r,t, z)-  \psi(\Delta)p(r,s, z)| \\
&\leq \int_0^1 |t-s|(\theta t + (1-\theta)s - r)^{-2}(a_{(\theta t + (1-\theta)s-r)})^d \\
&\qquad \times |q_3(r,\theta t + (1-\theta)s, a_{(\theta t + (1-\theta)s-r)}z)|d \theta.
\end{align*}
Therefore, by  (\ref{eqn 11.17.1}),
\begin{align*}
\int_{0}^a \int_{\fR^d} |\psi(\Delta)p(r,t, z)-  \psi(\Delta)p(r,s, z)| ~dz dr
&\leq \int_{0}^a \frac{|t-s|}{\big(\theta t + (1-\theta)s -r \big)^2} dr\\
&\leq |t-s|(s-a)^{-1}.
\end{align*}
The lemma is proved.
\end{proof}
Recall
\begin{align*}
{\varphi}(c) = \psi^{-1}(c^{-1})^{-1/2}=1/a_c
\end{align*}
and observe that by (\ref{psi inverse}), there exists a $\tilde c_0 \geq 1$ so that
\begin{align}
						\label{psi c0}
{\varphi}(t+s) \leq  \tilde c_0 \left({\varphi}(t) + {\varphi}(s) \right)  \qquad \forall s,t  \geq 0.
\end{align}
Denote
$$
A(t,s,r,y,x):= \left\{z \in \fR^d: {\varphi} (|t-r|) +|x-z| \geq 4\tilde c_0( {\varphi} (|t-s|) + |x-y|) \right\}.
$$ 
\begin{corollary}
For all $(t,x),(s,y) \in {(0,\infty)\times \fR^{d}}$,
\begin{align}            
						\label{326 eqn 9}
\int_{0}^\infty \int_{A(t,s,r,y,x)} | 1_{0 < r< t}\psi(\Delta)p(r,t,x-z) - 1_{0<r< s}\psi(\Delta)p(r,s, y-z)| ~dzdr 
\leq N,
\end{align}
where 
$N=N(d,\delta_k,N_j)$ ($k=4,5,6$ and $j=7,8,9,10$).
\end{corollary}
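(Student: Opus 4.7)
Without loss of generality assume $s \leq t$ and set $R := \varphi(|t-s|) + |x-y|$. Observe that $a_{t-s}R \geq a_{t-s}\varphi(|t-s|) = 1$ since $\varphi(c) = 1/a_c$ by definition. The plan is to split the $r$-integration at $r=s$: on $(s,t)$ only the first kernel is present, while on $(0,s)$ I use the triangle inequality
$$|\psi(\Delta)p(r,t,x-z) - \psi(\Delta)p(r,s,y-z)| \leq T_1(r,z) + T_2(r,z),$$
with $T_1 := |\psi(\Delta)p(r,t,x-z) - \psi(\Delta)p(r,t,y-z)|$ (spatial difference at time $t$) and $T_2 := |\psi(\Delta)p(r,t,y-z) - \psi(\Delta)p(r,s,y-z)|$ (temporal difference at spatial point $y-z$), and invoke \eqref{m c 2} and \eqref{m c 3} to control these two pieces.

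For $r \in (s,t)$, since $r>s$ forces $\varphi(|t-r|) \leq \varphi(|t-s|) \leq R$, membership in $A(t,s,r,y,x)$ forces $|x-z| \geq 3\tilde c_0 R$. After changing variables $z' = x-z$, estimate \eqref{m c 1} with $c = 3\tilde c_0 R$ yields a bound of order $(a_{t-s}R)^{-\delta} \leq N$.

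For $r \in (0,s)$, choose an absolute constant $K$ (depending only on $\tilde c_0,\tilde c,\lambda_0,\delta_4,\delta_5$) large enough, via the weak-scaling inequalities \eqref{eq 10121} and \eqref{psi inverse}, that $\tau := \varphi^{-1}(KR)$ satisfies $\tau \geq 2(t-s)$, and set $a := \max(0, s-\tau)$. On the sub-interval $(0,a]$, apply \eqref{m c 2} with $h = y-x$ and \eqref{m c 3}:
\begin{align*}
\int_0^a \!\int_{\fR^d} T_1\,dz\,dr &\leq N|x-y|\,a_{t-a} \leq N|x-y|/\varphi(\tau) \leq N|x-y|/(KR) \leq N,\\
\int_0^a \!\int_{\fR^d} T_2\,dz\,dr &\leq N(t-s)/(s-a) = N(t-s)/\tau \leq N.
\end{align*}
On the sub-interval $(a,s]$ both difference estimates degenerate as $s-r \to 0$, so I discard them and bound $T_1+T_2$ by the sum of the four individual kernel values. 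Split $(a,s]$ according to whether $\varphi(|t-r|) < 2\tilde c_0 R$ or $\varphi(|t-r|) \geq 2\tilde c_0 R$. In the former case, $z \in A$ forces $|x-z| \geq 2\tilde c_0 R$ and (since $\tilde c_0 \geq 1$) $|y-z| \geq \tilde c_0 R$, so the pointwise tail estimate $\int_{|z'| \geq c}|\psi(\Delta)p(r,\cdot,z')|\,dz' \leq N(\cdot-r)^{-1}(a_{\cdot-r}c)^{-\delta}$ from the proof of \eqref{m c 1} applies to each of the four kernels and, after the substitution $u = \cdot - r$ and an application of the weak-scaling bound for $a_u$ used in the proof of \eqref{m c 1}, integrates to a bound of order $(a_{t-s}R)^{-\delta}$; in the latter case $r$ is restricted so that $t-r \geq \varphi^{-1}(2\tilde c_0 R)$, and the $L_1$-mass bound from \eqref{eqn 11.17.1}, namely $\|\psi(\Delta)p(r,\cdot,\cdot)\|_{L_1} \leq N(\cdot-r)^{-1}$, integrated over this truncated range produces a bound of order $\log(\tau/\varphi^{-1}(2\tilde c_0 R))$ which is finite by the power-law scaling of $\varphi^{-1}$ in \eqref{psi inverse}.

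The principal technical obstacle is the simultaneous calibration of the threshold $\tau$: it must be large enough that the smooth-regime contributions from \eqref{m c 2} and \eqref{m c 3} over $(0,a]$ remain bounded, yet the residual piece over $(a,s]$ must still admit tail and $L_1$-mass bounds. The two-sided weak scaling of $\varphi$ provided by Assumption \ref{psi as 1}(i), together with \eqref{eq 10121}, is precisely what makes this balancing possible.
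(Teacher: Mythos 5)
Your proof takes a genuinely different route from the paper's. The paper splits the $r$-integration at $r=2s-t$ rather than at $r=s$. The key advantage of the paper's choice is that, on $(2s-t,t)$, both the singularity of $\psi(\Delta)p(r,t,\cdot)$ near $r=t$ \emph{and} that of $\psi(\Delta)p(r,s,\cdot)$ near $r=s$ are handled in one stroke by the tail estimate \eqref{m c 1}, because $s-(2s-t)=t-s$; and on $(0,2s-t)$ the temporal difference \eqref{m c 3} gives the clean bound $N(t-s)/(s-(2s-t))=N$ with no further work. Your decomposition splits at $r=s$, which leaves the singularity of $\psi(\Delta)p(r,s,\cdot)$ inside the leftover interval $(a,s]$, and you are forced to patch it with a further case split on $\varphi(|t-r|)$. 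The paper also calibrates its auxiliary threshold to $|x-y|$ (and separately cases on $|x-y|\lessgtr\varphi(t-s)$) rather than to $R=\varphi(t-s)+|x-y|$; your unified threshold $\tau=\varphi^{-1}(KR)$ is slightly more economical. Both strategies rest on the same three estimates of Lemma \ref{main cor} plus the $L_1$ mass bound \eqref{eqn 11.17.1}.

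However there is one place where your argument is not yet closed. In case 2 of the $(a,s]$ piece (where $\varphi(|t-r|)\geq 2\tilde c_0 R$) you assert that the $L_1$ mass bound $\|\psi(\Delta)p(r,\cdot,\cdot)\|_{L_1}\leq N(\cdot-r)^{-1}$, integrated over the truncated range, gives a bound of order $\log\bigl(\tau/\varphi^{-1}(2\tilde c_0 R)\bigr)$. That is correct for the kernel at time $t$, since there $t-r\geq\varphi^{-1}(2\tilde c_0 R)$ by hypothesis. But for the kernel $\psi(\Delta)p(r,s,\cdot)$ the relevant quantity is $s-r=(t-r)-(t-s)\geq\varphi^{-1}(2\tilde c_0 R)-(t-s)$, and nothing you have said rules out this lower bound being tiny compared with $\varphi^{-1}(2\tilde c_0 R)$, in which case $\int(s-r)^{-1}\,dr$ would produce an uncontrolled logarithm. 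The needed inequality is $\varphi^{-1}(2\tilde c_0 R)\geq 2(t-s)$, and it does in fact hold, but via the constant relation $\tilde c\leq 2\tilde c_0$ (which follows from $\varphi(2r)=\varphi(r+r)\leq\tilde c_0\bigl(\varphi(r)+\varphi(r)\bigr)$) combined with $\varphi(t-s)\leq R$: one then gets $\varphi(2(t-s))\leq\tilde c\,\varphi(t-s)\leq 2\tilde c_0 R$, hence $2(t-s)\leq\varphi^{-1}(2\tilde c_0 R)$ and so $s-r\geq\tfrac12\varphi^{-1}(2\tilde c_0 R)$. You need to state this explicitly; as written the $s$-kernel contribution in case 2 is unjustified. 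Also, the bound "of order $(a_{t-s}R)^{-\delta}$" for case 1 is imprecise — what one actually gets from \eqref{m c 1} with lower limit $a$ is $(a_\tau R)^{-\delta}$-type quantities, which are bounded by constants depending on $K$ and $\tilde c_0$ rather than by $(a_{t-s}R)^{-\delta}$ itself — but this is harmless since both are $\leq N$.
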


\begin{proof}
Choose a $0< \delta < \left(\delta_4 \wedge \frac{1}{2} \right)$.
Without loss of generality, we assume $t\geq s$. 
Denote
\begin{align*}
\cI(t,s,r,y,x)
=\int_{A(t,s,r,y,x)} | 1_{0<r<t}\psi(\Delta)p(r,t,x-z) - 1_{0<r<s}\psi(\Delta)p(r,s, y-z)| ~dz.
\end{align*}
If $r \geq t$, then $\cI(t,s,r,y,x)=0$. Thus 
\begin{align*}            
\int_{0}^\infty \cI(t,s,r,y,x) dr 
&=\int_{2s-t}^t \cI(t,s,r,y,x) dr +\int_{0}^{2s-t} \cI(t,s,r,y,x) dr \\
&=:\cI_1(t,s,y,x) + \cI_2(t,s,y,x).
\end{align*}
First we estimate $\cI_1(t,s,y,x)$. 
Note that due to (\ref{psi c0}),
\begin{align}
						\label{1126 e 1}
A(t,s,r,y,x) \subset \{ z \in \fR^d : |x-z| \geq  {\varphi} (|t-s|)  \}
\end{align}
if $2s-t<r<t$.
By  (\ref{1126 e 1}) and (\ref{m c 1}),
\begin{align*}
\cI_1(t,s,y,x)
&\leq \int_{2s-t}^t \int_{{\varphi}(|t-r|) +|x-z| \geq 4\tilde c_0({\varphi}(|t-s|) + |x-y|)} | \psi(\Delta)p(r,t,x-z) | ~dz dr \\
&\quad +\int_{2s-t}^t \int_{{\varphi}(|t-r|) +|x-z| \geq 4\tilde c_0({\varphi}(|t-s|) + |x-y|)} | \psi(\Delta)p(r,s,x-z) | ~dz dr \\
&\leq 2\int_{2s-t}^t\int_{|z| \geq  {\varphi} (|t-s|) } | \psi(\Delta)p(r,t,z) | ~dz dr \\
&\leq N \left(a_{t-s} {\varphi} (|t-s|) \right)^{-\delta}\leq N.
\end{align*}
We split $\cI_2$.
Observe
\begin{align*}
\cI_2 
&\leq \cI_{2,1}+ \cI_{2,2}\\
&:= \int_{0}^{2s-t}  \int_{A(t,s,r,y,x)} | 1_{0<r<t}\psi(\Delta)p(r,t,x-z) - 1_{0<r<t}\psi(\Delta)p(r,t, y-z)| ~dzdr  \\
&\quad +\int_{0}^{2s-t}  \int_{A(t,s,r,y,x)} | 1_{0<r<t}\psi(\Delta)p(r,t,y-z) - 1_{0<r<s}\psi(\Delta)p(r,s, y-z)| ~dz dr.
\end{align*}
If $|x-y| \leq {\varphi}  ((t-s))$ then by (\ref{m c 2}),
\begin{align*}
\cI_{2,1} \leq N|x-y| a_{2(t-s)} \leq N.
\end{align*}
On the other hand, if $|x-y| > {\varphi} ((t-s))$, then 
\begin{align}
						\label{1126 e 2}
t-s  \leq \frac{1}{ \psi(|x-y|^{-2})}.
\end{align}
Moreover by (\ref{psi c0}), if  $s-\left( \psi( |x-y|^{-2}) \right)^{-1}<r<t$ and (\ref{1126 e 2}) holds, then 
$$
A(t,s,r,y,x) \subset \{ |x-z| \geq {\varphi}(|t-s|) + |x-y| \}.
$$
Therefore
\begin{align*}
\cI_{2,1}\leq 2\cI_{2,1,1}+\cI_{2,1,2}, 
\end{align*}
where
\begin{align*}
&\cI_{2,1,1}:= \int_{s-\left( \psi( |x-y|^{-2}) \right)^{-1}}^{t} \int_{|z| \geq {\varphi}(|t-s|) + |x-y|} | \psi(\Delta)p(r,t,z)| ~dz dr,
\end{align*}
and 
\begin{align*}
\cI_{2,1,2}:=\int_{0}^{s-\left( \psi( |x-y|^{-2}) \right)^{-1}}  \int_{\fR^d}  1_{0<r<t}\left|\psi(\Delta)p(r,t,x-z) - \psi(\Delta)p(r,t, y-z)\right| ~dz dr.
\end{align*}
Recalling
$$
a_{t} := \left(\psi^{-1}(t^{-1}) \right)^{1/2} = 1/\varphi(t),
$$
we have by (\ref{m c 1}) again 
$$
\cI_{2,1,1} \leq N \left( a_{\left(t-s + \left( \psi( |x-y|^{-2}) \right)^{-1}\right)}\left({\varphi} ((t-s))+|x-y|\right)\right)^{-\delta} \leq N
$$
and by (\ref{m c 2})
\begin{align*}
\cI_{2,1,2} \leq |x-y| a_{\left(t - s +\left( \psi( |x-y|^{-2}) \right)^{-1} \right)} 
\leq N.
\end{align*}
It only remains to estimate $\cI_{2,2}$, which is an easy consequence of (\ref{m c 3}).
Indeed,
\begin{align*}
\cI_{2,2} \leq N(t-s)(t-s)^{-1} \leq N
\end{align*}
since $2s-t < s$. 
The corollary is proved.
\end{proof}

\mysection{Proof of Theorem \ref{main thm}}
										\label{pf main thm}
In this section, $X$ is a stochastic process satisfying Assumptions \ref{as 2} and  \ref{as}.
First we introduce the representation of solutions and related estimates.
\begin{lemma}
Let $f$ be a smooth function on $(0,T) \times \fR^d$ such that for any multi-index $\alpha$ and $\beta$,
\begin{align}
					\label{e 1030 1}
\sup_{t \in (0,T)} \sup_{x \in \fR^d}\left|x^\beta D^\alpha_xf(t,x) \right| < \infty
\end{align}
and  suppose that 
Assumptions \ref{as 2} and  \ref{as} hold.
Define 
$$
u(t,x) := \int_0^t \bE\left[ f(s,x+X_t -X_s) \right]ds.
$$
Then
\begin{align}
								\label{a sol}
u_t(t,x) = \cA(t)u(t,x)+ f(t,x), \quad u(0,x)=0,
\end{align}
for almost every $(t,x) \in (0,T) \times \fR^d$ .
Moreover,
\begin{align}
						\label{a est 1}
\| u\|_{L_p\left( [0,T] ; H^\phi_{p}\right)} \leq N \|f\|_{L_p\left( [0,T] ; L_p\right)}
\end{align}
and
\begin{align}
						\label{a est 2}
\| \phi(\Delta)u\|_{L_p\left( [0,T] ; L_p\right)} \leq \bar N \|f\|_{L_p\left( [0,T] ; L_p\right)},
\end{align}
where $N=N(d,p,\delta_k,N_j,T)$ and 
$\bar N= \bar N(d,p,\delta_k,N_j)$ ($k=1,2,3$ and $j=1,2,3,4$).
\end{lemma}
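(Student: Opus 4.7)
My plan is to establish three things in sequence: a Fourier-type representation of $u$, the pointwise equation \eqref{a sol}, and finally the $L_p$ estimates. Using Assumption \ref{as}(i) and the independence of the $X$-increments, we have
\begin{align*}
u(t,x) = \int_0^t \cF^{-1}\bigl[\exp(\Phi_X(s,t,\xi))\,\cF[f(s,\cdot)](\xi)\bigr](x)\,ds.
\end{align*}
Because $f$ satisfies \eqref{e 1030 1}, $\cF[f(s,\cdot)]$ decays faster than any polynomial in $\xi$, while $\exp(\Phi_X(s,t,\xi))$ and $\Psi_X(t,\xi)\exp(\Phi_X(s,t,\xi))$ grow at most polynomially by \eqref{e:PhiPsi_upper}. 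This legitimizes differentiating under the integral and yields $u_t = f + \cA(t)u$ once we observe that $\Phi_X(t,t,\xi)=0$ produces exactly the free term $f(t,x)$. Thus \eqref{a sol} holds pointwise.

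For \eqref{a est 1} the bound $\|u\|_{L_p([0,T];L_p)} \le T\,\|f\|_{L_p([0,T];L_p)}$ follows from Minkowski's integral inequality and the translation invariance of Lebesgue measure, since for each $\omega$ the map $x\mapsto x+X_t(\omega)-X_s(\omega)$ is an $L_p$-isometry. So the crux is \eqref{a est 2}. The strategy is to reduce the problem to an equation governed only by $X^1$ by conditioning on $X^2$. Using the independence in Assumption \ref{as 2}(i), condition on the $X^2$-path to obtain, for a.e.\ $\omega$,
\begin{align*}
u(t,x) = \bE\Bigl[\tilde v^{\omega}(t,x)\Bigr],\qquad \tilde v^{\omega}(t,x):=\int_0^t (T_{s,t}f(s,\cdot))\bigl(x+X^2_t(\omega)-X^2_s(\omega)\bigr)\,ds,
\end{align*}
where $T_{s,t}g(x):=\cF^{-1}[\exp(\Phi_{X^1}(s,t,\xi))\cF g(\xi)](x)=\bE[g(x+X^1_t-X^1_s)]$.

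Next, translating $x\mapsto x-X^2_t(\omega)$ and using that $T_{s,t}$ commutes with translations, define $v^{\omega}(t,x):=\tilde v^{\omega}(t,x-X^2_t(\omega))$ and $g^{\omega}(s,y):=f(s,y-X^2_s(\omega))$. Then
\begin{align*}
v^{\omega}(t,x)=\int_0^t T_{s,t}[g^{\omega}(s,\cdot)](x)\,ds,
\end{align*}
which is precisely the representation \eqref{u rep} of Section~4 applied to the symbol $\Psi_{X^1}$ with forcing $g^{\omega}$ and $\psi=\phi$. The conditions of Assumption \ref{psi as 1} for this choice are inherited directly from Assumption \ref{as}(ii): \eqref{main as 3}--\eqref{main as 2} translate into \eqref{Psi as 1}--\eqref{psi dif} together with \eqref{psi as}. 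Theorem \ref{a p e psi} therefore gives
\begin{align*}
\|\phi(\Delta)v^{\omega}\|_{L_p((0,T)\times\fR^d)} \le N\,\|g^{\omega}\|_{L_p((0,T)\times\fR^d)} = N\,\|f\|_{L_p((0,T)\times\fR^d)},
\end{align*}
uniformly in $\omega$, since translation is an $L_p$-isometry. Translating back, $\|\phi(\Delta)\tilde v^{\omega}\|_{L_p}=\|\phi(\Delta)v^{\omega}\|_{L_p}$ for each $\omega$, and Jensen's inequality in the form $|\bE[\phi(\Delta)\tilde v^{\omega}]|^p \le \bE[|\phi(\Delta)\tilde v^{\omega}|^p]$ followed by Fubini yield \eqref{a est 2} with $\bar N$ independent of $T$.

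The main obstacle is the justification of the conditioning: one must verify that $\phi(\Delta)$ commutes with the conditional expectation in the sense $\phi(\Delta)u(t,x)=\bE[\phi(\Delta)\tilde v^{\omega}(t,x)]$ and that all integrands are measurable in $(t,x,\omega)$. This is where Assumption \ref{as 2}(ii) enters: local boundedness of the paths of $X^2$ ensures $g^{\omega}\in L_\infty\cap L_2$ a.s., so Theorem~\ref{a p e psi} applies pathwise, and $\omega\mapsto\|\phi(\Delta)v^{\omega}\|_{L_p}^p$ is measurable. The smoothness hypothesis \eqref{e 1030 1} on $f$ makes $(s,x,\omega)\mapsto T_{s,t}f(s,\cdot)(x+X^2_t(\omega)-X^2_s(\omega))$ jointly measurable and permits all the Fubini manipulations above, allowing the Fourier and probabilistic representations to be identified.
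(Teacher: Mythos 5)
Your proof takes essentially the same route as the paper's: the Fourier representation of $u$ and the verification of \eqref{a sol} via $\Phi_X(t,t,\xi)=0$, the trivial Minkowski bound for $\|u\|_{L_p}$, and then the crucial reduction of \eqref{a est 2} to Theorem \ref{a p e psi} by conditioning on $X^2$, translating by $X^2_t$, and applying the pure-jump estimate pathwise to the translated data $f(s,\cdot-X^2_s)$. The only cosmetic deviation is that you invoke pointwise Jensen plus Fubini where the paper invokes Minkowski's integral inequality on the $L_p$ norm, which is equivalent.
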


\begin{proof}
Observe that by Fubini's theorem and Assumption \ref{as}(i),
\begin{align}
u(t,x) 
					\notag
&= \int_0^t \bE\left[  \cF^{-1}\left[ \cF\left[f(s,\cdot+X_t -X_s)\right](\xi) \right](x) \right]ds \\
						\notag
&= \int_0^t  \cF^{-1}\left[\bE\left[e^{i(\xi \cdot (X_t-X_s))}\right] \left[ \cF\left[f(s,\cdot)\right](\xi) \right](x) \right] ds\\
					\notag
&= \int_0^t  \cF^{-1}\left[\exp\left(\Phi_X(s,t,\xi)\right) \left[ \cF\left[f(s,\cdot)\right](\xi) \right](x) \right] ds\\
					\label{fo u}
&= \cF^{-1}\left[ 
\int_0^t  \exp\left(\Phi_X(s,t,\xi)\right) \left[ \cF\left[f(s,\cdot)\right](\xi) \right] ds 
 \right](x).
\end{align}
Recalling 
$\Phi_X(t,t,\xi)=0$,
by Assumption \ref{as}(i) again we have
\begin{align}
					\notag
\cA(t)u(t,x) &= \lim_{h \downarrow 0}\frac{\bE\left[u(t,x+X_{t+h}-X_t)-u(t,x)\right]}{h} \\
					\notag
&= \lim_{h \downarrow 0}\frac{ \bE \left[\cF^{-1}\left[ \left(e^{i\xi \cdot (X_{t+h} -X_t)}-1 \right)\cF[u(t,\cdot)](\xi)\right](x) \right]}{h} \\
					\notag
&= \lim_{h \downarrow 0}\cF^{-1}\left[  \frac{ \exp(\Phi_X(t,t+h,\xi))-\exp(\Phi_X(t,t,\xi))}{h}\cF[u(t,\cdot)](\xi)\right](x)  \\
					\label{f au}
&= \cF^{-1}\left[ \Psi_X(t,\xi)\cF[u(t,\cdot)](\xi)\right](x) \\
					\notag
&= \cF^{-1}\left[ \int_0^t  \Psi_X(t,\xi) \exp\left(\Phi_X(s,t,\xi)\right) \left[ \cF\left[f(s,\cdot)\right](\xi) \right] ds    \right](x) 
\end{align}
and
\begin{align}
					\notag
&\frac{\partial}{\partial t} \int_0^t  \exp\left(\Phi_X(s,t,\xi)\right) \left[ \cF\left[f(s,\cdot)\right](\xi) \right] ds \\
					\notag
&=\cF\left[f(t,\cdot)\right](\xi)
+ \int_0^t  \frac{\partial}{\partial t} \left(\exp\left(\Phi_X(s,t,\xi)\right) \right) \left[ \cF\left[f(s,\cdot)\right](\xi) \right] ds \\
					\label{910 1}
&=\cF\left[f(t,\cdot)\right](\xi)
+ \int_0^t  \Psi_X(t,\xi) \exp\left(\Phi_X(s,t,\xi)\right) \left[ \cF\left[f(s,\cdot)\right](\xi) \right] ds.
\end{align}
Since the last term above is integrable with respect to $\xi$ uniformly $t \in (0,T)$ for any $T\in (0,\infty)$,  we get
(\ref{a sol}) by taking the inverse Fourier transform to both sides of (\ref{910 1}). 

Next we show (\ref{a est 1}) and (\ref{a est 2}).
Due to the definition of $u$ and Minkowski's inequality,
\begin{align*}
\|u\|_{L_p([0,T];L_p)}
&= \left\| \int_0^t \bE\left[ f(s,x+X_t -X_s) \right]ds \right\|_{L_p([0,T];L_p)} \\
&\leq N(T) \| f\|_{L_p([0,T];L_p)}.
\end{align*}
Thus it suffices to show (\ref{a est 2}). 
We now prove this estimate in the following two steps.
\vspace{2mm}

\noindent
{\bf Step 1:} Assume $X =X^1$.  Note that if one takes $\psi=\phi$ and $\Psi=\Psi_{X^1}$ then 
 Assumption \ref{as}(ii) is exactly same as 
 Assumptions \ref{psi as 1}. 
Therefore due to (\ref{fo u}) and Theorem \ref{a p e psi},
\begin{align*}
\phi(\Delta)u (t,x)
= \cF^{-1}\left[ 
\phi(|\xi|^2)\int_0^t  \exp\left(\Phi_X(s,t,\xi)\right) \left[ \cF\left[f(s,\cdot)\right](\xi) \right] ds 
 \right](x)
\end{align*}
and
$$
\|\phi(\Delta)u\|_{L_p((0,T);L_p)}
\leq N \|f\|_{L_p((0,T);L_p)}.
$$

\noindent
{\bf Step 2 (General case):} Recall that two processes $X^1$ and $X^2$ are independent. 
Thus by Assumption \ref{as} and Fubini's theorem,

\begin{align*}
&\phi(\Delta)u(t,x)  
= \cF^{-1} \left[ \phi(|\xi|^2)\int_0^t \bE\left[ \cF\left[f(s,\cdot+X_t -X_s)\right](\xi) \right]ds \right](x)\\
&= \bE'\bigg[  \cF^{-1}\bigg( \phi(|\xi|^2) \times \\
&  \qquad \int_0^t \bE\left[ \cF\left[f(s,\cdot+X^1_t(\omega) -X^1_s(\omega)+X^2_t(\omega')-X^2_s(\omega'))\right](\xi)  \right]ds \bigg)(x) \bigg] \\
&= \bE'\bigg[  \cF^{-1}\bigg( \phi(|\xi|^2) \times \\
&  \qquad \int_0^t \bE\left[ \cF\left[f(s,\cdot+X^1_t(\omega) -X^1_s(\omega)-X^2_s(\omega'))\right](\xi)  \right]ds \bigg)
\left(x+X^2_t(\omega')\right) \bigg],
\end{align*}
where $\bE$ and $\bE'$ are the expectations with respect to the variables $\omega$  and $\omega'$, respectively. 
Since the paths of $X^2$ are locally bounded $(a.s.)$, one can easily check that
$f_{X^2}(s,x):=f(s,x-X_s^2)$ satisfies (\ref{e 1030 1}) $(a.s.)$.
For each fixed $\omega' \in \Omega$, denote 
$$
u_{X^2(\omega')}(t,x) := \int_0^t \bE\left[ f_{X^2(\omega')}(s,x+X^1_t -X^1_s) \right] ds.
$$
Then by Minkowski's inequality, the change of variable $x \to x-X^2_t(\omega')$ and the result of Step 1,
\begin{align*}
&\|\phi(\Delta)u\|_{L_p([0,T];L_p)}  
\leq 
\bE'\| \phi(\Delta)u_{X^2(\omega')}\|_{L_p([0,T];L_p)}\\
&\leq N\bE'\|f_{X^2(\omega')}\|_{L_p([0,T];L_p)} 
= N\|f\|_{L_p([0,T];L_p)}.
\end{align*}
The lemma is proved. 
\end{proof}
\begin{lemma}
						\label{uni lem}
Let $u \in C^\infty_c((0,T)\times \fR^d)$ and suppose that Assumption \ref{as} holds.
Then
\begin{align}
					\label{u f rela}
u(t,x) = \int_0^t \bE\left[ f(s,x+X_t -X_s) \right] ds\quad \forall (t,x) \in (0,T)\times \fR^d,
\end{align}
where 
$$
f(t,x)= u_t(t,x) - \cA(t)u(t,x).
$$
\end{lemma}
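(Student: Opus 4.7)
The plan is to pass to the Fourier side in $x$, solve a parameterized ODE in $t$, and then invert. Since $u \in C_c^\infty((0,T)\times\fR^d)$, the partial Fourier transform $\hat u(t,\xi) := \cF[u(t,\cdot)](\xi)$ is a Schwartz function in $\xi$ for each $t$, smooth in $t$, and supported in $t \in [\varepsilon, T-\varepsilon]$ for some $\varepsilon>0$. In particular $u(0,\cdot) = 0$, so $\hat u(0,\xi) = 0$. Using the computation of $\cA(t)$ already carried out in Lemma 2.6(iii), namely $\cF[\cA(t)u(t,\cdot)](\xi) = \Psi_X(t,\xi)\hat u(t,\xi)$, the identity $f = u_t - \cA(t)u$ becomes the linear first-order ODE (in $t$, with $\xi$ as parameter)
\begin{equation*}
\partial_t \hat u(t,\xi) - \Psi_X(t,\xi)\, \hat u(t,\xi) = \hat f(t,\xi), \qquad \hat u(0,\xi)=0.
\end{equation*}

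By the local integrability of $\Psi_X(\cdot,\xi)$ in Assumption \ref{as}(i), the integrating factor $\exp(-\Phi_X(0,t,\xi))$ is well defined, and Duhamel's formula gives
\begin{equation*}
\hat u(t,\xi) = \int_0^t \exp\bigl(\Phi_X(s,t,\xi)\bigr)\, \hat f(s,\xi)\, ds.
\end{equation*}
Next I would invert the Fourier transform in $\xi$. The bound \eqref{e:PhiPsi_upper} gives at most polynomial growth of $\exp(\Phi_X(s,t,\xi))$ in $\xi$, while $\hat f(s,\xi)$ is Schwartz in $\xi$ with all seminorms controlled uniformly for $s \in [0,T]$; this legitimates both the differentiation in $t$ above and the application of $\cF^{-1}$ together with Fubini's theorem to bring the $s$-integral outside:
\begin{equation*}
u(t,x) = \int_0^t \cF^{-1}\!\left[\exp\bigl(\Phi_X(s,t,\xi)\bigr)\, \hat f(s,\xi)\right](x)\, ds.
\end{equation*}

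Finally, I would identify the inner inverse Fourier transform probabilistically. By Assumption \ref{as}(i), $\exp(\Phi_X(s,t,\xi)) = \bE[e^{i\xi\cdot(X_t - X_s)}]$, so formally
\begin{equation*}
\cF^{-1}\!\left[\bE[e^{i\xi\cdot(X_t-X_s)}]\, \hat f(s,\xi)\right](x) = \bE\bigl[f(s, x + X_t - X_s)\bigr],
\end{equation*}
upon interchanging $\bE$ and $\cF^{-1}$. This interchange is justified by Fubini once one notes that, for $f(s,\cdot) \in \cS(\fR^d)$, the random field $(\omega,\xi) \mapsto e^{i\xi\cdot(X_t(\omega)-X_s(\omega))}\hat f(s,\xi)$ is dominated in modulus by $|\hat f(s,\xi)|$, which is integrable in $\xi$. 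Plugging this identification back and using $u(0,x)=0$ yields \eqref{u f rela}.

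The only delicate point is the dominated-convergence/Fubini step at the last stage, i.e.\ commuting the expectation with the $\xi$-integration defining $\cF^{-1}$; this is the main obstacle, but it is handled trivially by the Schwartz decay of $\hat f(s,\cdot)$ coming from $u \in C_c^\infty$, together with the fact that $|\exp(\Phi_X(s,t,\xi))| = |\bE[e^{i\xi\cdot(X_t-X_s)}]| \leq 1$ inside the expectation (even though the deterministic polynomial bound \eqref{e:PhiPsi_upper} would also suffice after pairing with $\hat f$). All the other manipulations are immediate from the already established Fourier formula for $\cA(t)$.
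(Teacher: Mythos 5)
Your proposal is correct and follows essentially the same route as the paper: take the partial Fourier transform in $x$ using the computed symbol of $\cA(t)$, solve the resulting first-order ODE in $t$ by Duhamel's formula to get $\hat u(t,\xi)=\int_0^t e^{\Phi_X(s,t,\xi)}\hat f(s,\xi)\,ds$, and then reverse the derivation of (\ref{fo u}) to recover the probabilistic representation. The only minor imprecision is the parenthetical claim that $f(s,\cdot)\in\cS(\fR^d)$ --- since $\cA(t)u(t,\cdot)=\cF^{-1}[\Psi_X(t,\cdot)\hat u(t,\cdot)]$ and $\Psi_X(t,\cdot)$ need not be smooth, $\hat f(s,\cdot)$ is only rapidly decreasing (polynomial growth times Schwartz), not Schwartz --- but that is all you actually use for the Fubini/dominated-convergence step, so the argument stands.
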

\begin{proof}
Recalling (\ref{f au}) and taking the Fourier transform, we have
\begin{align*}
\cF[f(t,\cdot)](\xi) = \frac{\partial}{\partial t}\cF[u(t,\cdot)](\xi)
- \Psi_X(t,\xi)\cF[u(t,\cdot)](\xi).
\end{align*}
For each $\xi$, solving the above ODE with respect to $t$, we have
\begin{align*}
\cF[u(t,\cdot)](\xi)
&= \int_0^t \exp\left( \Phi_X(s,t,\xi) \right) \cF[f(s,\cdot)](\xi) ds.
\end{align*}
Thus following (\ref{fo u}) in the reverse order, we obtain
(\ref{u f rela}) since the both sides of (\ref{u f rela}) are continuous on $(0,T)\times \fR^d$. 
The lemma is proved. 
\end{proof}

\vspace{3mm}
\noindent
{\bf Proof of Theorem \ref{main thm}}
\vspace{2mm}

\noindent
{\bf Step 1} (Existence)
\vspace{2mm}

Choose a sequence $f_n \in C_c((0,T) \times \fR^d)$ so that
$$
\|f_n - f\|_{L_p([0,T] ;L_p)} \to 0
$$
as $n \to \infty$. 
Define 
$$
u_n(t,x) := \int_0^t \bE\left[ f_n(s,x+X_t -X_s) \right].
$$
Then by (\ref{a est 1}) and (\ref{a est 2}),
\begin{align*}
\| u_n -u_m\|_{L_p\left( [0,T] ; H^\phi_{p}\right)} \leq N \|f_n -f_m\|_{L_p\left( [0,T] ; L_p\right)}
\end{align*}
and
\begin{align*}
\| \phi(\Delta)(u_n-u_m)\|_{L_p\left( [0,T] ; L_p\right)} \leq \bar N \|f_n-f_m\|_{L_p\left( [0,T] ; L_p\right)}.
\end{align*}
Since $L_p\left( [0,T] ; H^\phi_{p}\right)$ is a Banach space, 
$u_n$ converges to $u \in L_p\left( [0,T] ; H^\phi_{p}\right)$ and 
$u$ becomes a solution to equation (\ref{main eqn}) according to Definition \ref{def sol}
and obviously $u$ satisfies (\ref{main est 1}) and (\ref{main est 2}). 

\vspace{2mm}
\noindent
{\bf Step 2 } (Uniqueness)
\vspace{2mm}

Let $u$ and $v$ be solutions to equation (\ref{main eqn}). Then by Definition \ref{def sol}, one can find sequences $u_n \in C^\infty_c((0,T)\times \fR^d)$ and $v_n \in C^\infty_c((0,T)\times \fR^d)$ so that
$$
\frac{\partial u_n}{\partial t}- \cA(t)u_n \to f \quad \text{in} \quad L_p\left( [0,T] ; L_p\right),
$$
$$
u_n \to u \quad \text{in} \quad L_p\left( [0,T] ; H^\phi_{p}\right)
$$
and
$$
\frac{\partial v_n}{\partial t}- \cA(t)v_n \to f \quad \text{in} \quad L_p\left( [0,T] ; L_p\right),
$$
$$
v_n \to v \quad \text{in} \quad L_p\left( [0,T] ; H^\phi_{p}\right)
$$
as $n \to \infty$.
Denote
$$
f_n=\frac{\partial u_n}{\partial t}- \cA(t)u_n
$$
and
$$
g_n=\frac{\partial v_n}{\partial t}- \cA(t)v_n
$$
Then by Lemma \ref{uni lem},
$$
u_n(t,x) = \int_0^t \bE\left[ f_n(s,x+X_t -X_s) \right] ds
$$
and
$$
v_n(t,x) = \int_0^t \bE\left[ g_n(s,x+X_t -X_s) \right] ds.
$$
Since both $f_n$ and $g_n$ converge to $f$ in $L_p\left( [0,T] ; L_p\right)$,
we have
$
u=v.
$
The theorem is proved. \qed

\mysection{Appendix: Proof of Theorem \ref{fs thm}}
													 \label{hl fs pf}

Throughout this section, let $(O,\rF,\mu)$ be a complete measure space such  that
$$
\mu(O)=\infty.
$$
By $\rF_0$ we denote the subset of $\rF$ consisting of all sets $A$ such that $\mu(A) < \infty$.
$\bL(O,\rF,\mu)$ indicates the space of all locally integrable functions $f$ on $(O, \rF, \mu)$, i.e..
$$
f \in \bL(O,\rF,\mu) \quad  \Leftrightarrow \quad f1_A \in L_1(O, \rF,\mu) \quad \forall A \in \cF_0.
$$

If the given measure space is clear, we simply use notation $\bL$.
We borrow terminologies from \cite[Chapter 3]{Krylov2008}.
\begin{defn}
                \label{def filt}
We say that a collection $\rP \subset \rF_0$ is a partition if and only if elements of $\rP$ are countable, pairwise disjoint, and
$$
\bigcup_{\cP \in \rP} \cP = O.
$$
\end{defn}
\begin{remark}
Due to the definition of the partition,
the measure space $(O,\rF,\mu)$ is $\sigma$-finite if there is a partition $\rP$ on $(O,\rF,\mu)$.
\end{remark}

\begin{defn}
                    \label{filt defn}
Let $(\rP_n, n\in \bZ)$ be a sequence of partitions.
We say that $(\rP_n, n\in \bZ)$ is a filtration of partitions on $(O,\rF,\mu)$ if and only if

(i)
$$
\inf_{\cP \in \rP_n} \mu(\cP) \to \infty , \quad \text{as} \quad n \to -\infty
$$
and
\begin{align}
                    \label{ld pro}
\lim_{n\to \infty} \frac{1}{\mu(\cP_n(x))}\int_{\cP_n(x)}f(y)\mu(dy)=f(x) \quad (a.e.) \quad \forall f \in \bL,
\end{align}
where $\cP_n(x)$ denote the element of $\rP_n$ containing $x$;

(ii) For each $n \in \bZ$ and $\cP \in \rP_n$, there is a (unique) $\cP' \in \rP_{n-1}$ such that $\cP \subset \cP'$ and
$$
\mu(\cP') \leq N_0 \mu(\cP),
$$
where $N_0$ is a constant independent of $n$, $\cP$, and $\cP'$.
\end{defn}

We introduce a general Fefferman-Stein sharp function  related to the filtration of partition $(\rP_n, n\in \bZ)$. For a locally integrable function $f$ on $(O,\rF,\mu)$, we define its sharp function $f^\#$ 
as 
$$
f^\#(x) := \sup_{n \in \bZ} \aint |f(y)-f_{| n}(x)|\mu(dy):=\sup_{n \in \bZ}\frac{1}{\mu(\cP_n(x))} \int_{\cP_n(x)} |f(y) - f_{|n}(x)| \mu(dy),
$$
where
$$
f_{|n}(x):=\frac{1}{\mu(\cP_n(x))} \int_{\cP_n(x)} f(y) \mu(dy).
$$

At last, we introduce a version of Fefferman-Stein theorem on a measure space $(O, \rF, \mu)$ with a filtration.
\begin{thm}
                \label{kry fe-st}
For any $f \in L_p(O, \rF, \mu)$,
$$
\|f\|_{L_p(O,\rF,\mu)} \leq N\|f^\#\|_{L_p(O,\rF,\mu)},
$$
where $p \in (1,\infty)$, $q= p/(p-1)$, and $N= (2q)^pN_0^{p-1}$.
\end{thm}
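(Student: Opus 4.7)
The plan is to adapt the classical Fefferman-Stein strategy to the abstract filtration setting by combining a Calder\'on-Zygmund-type stopping-time decomposition with a good-$\lambda$ inequality. Write $Mf(x):=\sup_{n\in\bZ}|f_{|n}(x)|$; by the Lebesgue differentiation property (\ref{ld pro}) applied to $f\in \bL$, one has $|f|\leq Mf$ almost everywhere, so it suffices to prove $\|Mf\|_{L_p}\leq N\|f^\#\|_{L_p}$. Assume $\|f^\#\|_{L_p}<\infty$, else the bound is trivial.

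Fix $\lambda>0$ and let $E_\lambda:=\{Mf>\lambda\}$. H\"older's inequality applied to the average defining $f_{|n}$, combined with $\mu(\cP_n(x))\uparrow\infty$ as $n\downarrow-\infty$ from Definition \ref{filt defn}(i), yields $|f_{|n}(x)|\to 0$ as $n\to-\infty$, so the stopping time $n(x):=\min\{n\in\bZ:|f_{|n}(x)|>\lambda\}$ is well-defined and finite on $E_\lambda$. Set $Q(x):=\cP_{n(x)}(x)$. The nestedness in Definition \ref{filt defn}(ii) forces any two such sets $Q(x),Q(y)$ to be either equal or disjoint, giving a countable disjoint decomposition $E_\lambda=\bigsqcup_k Q_k$, where each $Q_k\in\rP_{n_k}$ has a unique parent $Q_k'\in\rP_{n_k-1}$ satisfying $\mu(Q_k')\leq N_0\mu(Q_k)$. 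Writing $c_k:=f_{|n_k}$ (constant on $Q_k$) and $c_k':=f_{|n_k-1}$ (constant on $Q_k'$), the minimality of $n_k$ gives $|c_k|>\lambda$ and $|c_k'|\leq\lambda$, while the doubling bound gives
$$
|c_k-c_k'|\leq\frac{1}{\mu(Q_k)}\int_{Q_k}|f-c_k'|\,d\mu\leq\frac{N_0}{\mu(Q_k')}\int_{Q_k'}|f-c_k'|\,d\mu\leq N_0 f^\#(y)
$$
for every $y\in Q_k'$. Hence $|c_k|\leq\lambda+N_0 f^\#(y)$ for all $y\in Q_k$; in particular, any cube $Q_k$ with $|c_k|>2\lambda$ is contained in $\{f^\#>\lambda/N_0\}$.

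Split the cubes into Type~I ($|c_k|>2\lambda$) and Type~II ($|c_k|\leq 2\lambda$) to get
$$
E_{2\lambda}\subset\{f^\#>\lambda/N_0\}\;\cup\;\bigcup_{Q_k\text{ of Type II}}\bigl(\{Mf>2\lambda\}\cap Q_k\bigr).
$$
The Type~II remainder is controlled by iterating the same stopping-time construction inside each such $Q_k$ at level $2\lambda$, the key pointwise bound above applying verbatim on $Q_k$ with its induced partition. Multiplying the resulting distributional inequality by $p\lambda^{p-1}$, integrating in $\lambda>0$, using Fubini/layer-cake, and invoking Doob's maximal inequality with constant $q$ on the maximal function of $f^\#$ to absorb the $\{f^\#>\lambda/N_0\}$ term, is designed to produce $\|Mf\|_{L_p}^p\leq(2q)^pN_0^{p-1}\|f^\#\|_{L_p}^p$, which is the assertion.

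The main obstacle is the bookkeeping required to arrive at exactly $N=(2q)^pN_0^{p-1}$: the iteration over Type~II cubes yields a geometric-type contribution whose ratio must be tracked precisely (using only the doubling estimate $\mu(Q')\leq N_0\mu(Q)$, since no geometric structure on $O$ is available), and one must verify that the two sources of constants, namely the Doob factor $q$ from the maximal function of $f^\#$ and the doubling factor $N_0$ from the filtration, interact so that the final powers are $(2q)^p$ and $N_0^{p-1}$ respectively, rather than anything larger.
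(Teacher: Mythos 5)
The paper's own ``proof'' is a citation to \cite[Lemma 3.2.4]{Krylov2008} and \cite[Theorem 3.2.10]{Krylov2008}, so there is no in-text argument to compare against; your proposal attempts to reconstruct a self-contained proof, and the skeleton you choose (stopping-time / Calder\'on--Zygmund decomposition of $E_\lambda$ into maximal cubes, the pointwise bound $|c_k-c_k'|\le N_0 f^\#$ via the doubling constant $N_0$, a good-$\lambda$ distributional inequality, layer-cake integration) is the standard and correct route, and the reduction $|f|\le Mf$ a.e.\ via property (\ref{ld pro}) together with the vanishing of $f_{|n}$ as $n\to-\infty$ is sound.

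There is, however, a genuine gap in the control of the Type~II remainder. You propose to ``iterate the same stopping-time construction inside each such $Q_k$ at level $2\lambda$,'' but this does not close. If $Q_{k,j}$ is a maximal sub-cube of a Type~II cube $Q_k$ on which the average exceeds $2\lambda$, then $Mf>2\lambda$ holds on all of $Q_{k,j}$, so the Type~II cubes at the next stage are again entirely contained in $\{Mf>2\lambda\}$; the iteration merely re-covers the same set and produces no decay, hence no geometric series. The missing ingredient is the weak-$(1,1)$ estimate for the local dyadic maximal function. Inside a Type~II cube $Q_k$ one writes $f=(f-c_k)+c_k$; for $x\in Q_k$ with $Mf(x)>2\lambda$ the supremum defining $Mf(x)$ must be attained at levels $n\ge n_k$ (by maximality of $Q_k$), and since $(c_k)_{|n}=c_k$ on sub-cubes of $Q_k$, one gets $\sup_{n\ge n_k}|(f-c_k)_{|n}(x)|>2\lambda-|c_k|$. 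Under the good-$\lambda$ threshold $\inf_{Q_k}f^\#\le\gamma\lambda$ one has $|c_k|\le(1+N_0\gamma)\lambda$, so the weak-$(1,1)$ estimate applied to $(f-c_k)\mathbf{1}_{Q_k}$, together with $\int_{Q_k}|f-c_k|\,d\mu\le\mu(Q_k)\inf_{Q_k}f^\#$, gives $\mu(\{Mf>2\lambda\}\cap Q_k)\le\frac{\gamma}{1-N_0\gamma}\mu(Q_k)$. That bound, summed over the Type~II cubes, is what drives the good-$\lambda$ decay; no iteration is needed.

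A second, smaller point: ``invoking Doob's maximal inequality with constant $q$ on the maximal function of $f^\#$'' is misplaced. The term $\mu(\{f^\#>\lambda/N_0\})$ is just a distribution function; integrating it against $p\lambda^{p-1}$ gives $N_0^p\|f^\#\|_p^p$ directly with no maximal inequality. Where Doob's $L_p$ bound (constant $q=p/(p-1)$) genuinely enters is in establishing the a priori finiteness of $\|Mf\|_p$, which is needed to legitimately absorb the $\|Mf\|_p^p$ term on the right into the left. And as you acknowledge, the bookkeeping required to land on exactly $N=(2q)^pN_0^{p-1}$ is not carried out; a naive good-$\lambda$ optimisation of $\gamma$ tends to produce a constant of a different shape, so matching Krylov's constant would require following his specific choices rather than free optimisation.
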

\begin{proof}
See \cite[Lemma 3.2.4]{Krylov2008} and \cite[Theorem 3.2.10]{Krylov2008}.
\end{proof}

For $n, i_1,\cdots, i_d\in \bZ$, denote 
$$
\fB_{2^{-n}}(i_1,\ldots,i_d)=(i_12^{-n},(i_1+1)2^{-n}] \times \cdots \times (i_d 2^{-n},(i_d+1)2^{-n}].
$$
Recall $U=\fR^d$ or $U=\fR^d_+$.
Finally we construct a filtration on $(0,\infty)\times U$ related to the function $\varphi$ in Assumption \ref{as v}(i).

\begin{thm}
                \label{ex filt 3}
Suppose that Assumetion \ref{as v}(i) holds.
Then  there exists a sequence $(\sigma_n, n \in \bZ)$ such that  $\sigma_n \in [1,2)$, 
$$
\rP_n:=\left\{ \big(i\varphi(2^{-n})\sigma_n ,(i+1)\varphi(2^{-n})\sigma_n \big] \times \fB_{2^{-n}}(i_1,\ldots,i_d), \,\, i \in \bZ_+ ,i_1,\ldots,i_d \in \bZ \right\}
$$
and 
$$
\rP^{+}_{n}:=\left\{ \big(i\varphi(2^{-n})\sigma_n ,(i+1)\varphi(2^{-n})\sigma_n \big] \times \fB_{2^{-n}}(i_1,\ldots,i_d),
 \,\,  i, i_1 \in \bZ_+ ,i_2,\ldots,i_d \in \bZ \right\}
$$ 
become a filtration of partitions on ${(0,\infty)\times \fR^d}$ and ${(0,\infty)\times \fR^d_+}$ respectively.
\end{thm}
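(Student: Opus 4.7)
The plan is to construct the scaling constants $\sigma_n$ recursively so that the temporal grids are nested, and then verify each of the axioms in Definitions \ref{def filt} and \ref{filt defn}. The idea is that the spatial dyadic cubes $\fB_{2^{-n}}$ already nest automatically, so the only real work is the temporal component: we must arrange that $L_n:=\varphi(2^{-n})\sigma_n$ satisfies $L_{n-1}/L_n\in\bN$ for every $n\in\bZ$, while keeping $\sigma_n\in[1,2)$.

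First I set $\sigma_0:=1$, so $L_0=\varphi(1)$. For $n\ge 1$, suppose $L_{n-1}$ has already been chosen with $\sigma_{n-1}\in[1,2)$. I want to pick an integer $k_n$ with $L_n:=L_{n-1}/k_n\in[\varphi(2^{-n}),2\varphi(2^{-n}))$, equivalently $k_n$ lying in the half-open interval $I_n:=\bigl(a_n/2,\,a_n\bigr]$, where $a_n:=L_{n-1}/\varphi(2^{-n})$. Since $\varphi$ is nondecreasing,
$$
a_n\;=\;\sigma_{n-1}\frac{\varphi(2^{-n+1})}{\varphi(2^{-n})}\;\ge\;1.
$$
An elementary argument shows that $I_n$ always contains a positive integer when $a_n\ge 1$: if $a_n\in[1,2)$ then $1\in I_n$, and if $a_n\ge 2$ then $\lfloor a_n\rfloor>a_n-1\ge a_n/2$, so $\lfloor a_n\rfloor\in I_n$. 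Choose any such $k_n$ and set $\sigma_n:=L_n/\varphi(2^{-n})\in[1,2)$. For $n\le -1$ I do the symmetric backward recursion: given $L_n$, I pick an integer $k_n$ with $L_{n-1}:=k_n L_n\in[\varphi(2^{-n+1}),2\varphi(2^{-n+1}))$, i.e. $k_n\in[b,2b)$ with $b:=\varphi(2^{-n+1})/L_n$. Because $L_n<2\varphi(2^{-n})\le 2\varphi(2^{-n+1})$ gives $b>1/2$, the same elementary lemma shows $[b,2b)$ always contains an integer.

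Once the $\sigma_n$ are fixed, the verification is routine. Each $\rP_n$ is manifestly a countable collection of pairwise disjoint half-open boxes tiling $(0,\infty)\times U$, so it is a partition in the sense of Definition \ref{def filt}. Since $L_{n-1}=k_n L_n$ with $k_n\in\bN$, every temporal endpoint at level $n-1$ is a temporal endpoint at level $n$, hence each level-$n$ time interval lies in a unique level-$(n-1)$ time interval; combined with the standard dyadic nesting in space, this gives the unique parent $\cP'\supset\cP$ required in Definition \ref{filt defn}(ii). The volume ratio is
$$
\frac{\mu(\cP')}{\mu(\cP)}\;=\;k_n\cdot 2^{d}\;\le\; a_n\cdot 2^d\;\le\;2\tilde c\cdot 2^d,
$$
using $a_n\le 2\varphi(2^{-n+1})/\varphi(2^{-n})\le 2\tilde c$ by (\ref{tilde c}); so $N_0:=2^{d+1}\tilde c$ works.

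For Definition \ref{filt defn}(i), note $\mu(\cP)=L_n\cdot 2^{-nd}\ge\varphi(2^{-n})\cdot 2^{-nd}\to\infty$ as $n\to-\infty$ by (\ref{varphi first}). For the Lebesgue-differentiation identity (\ref{ld pro}), as $n\to\infty$ both $L_n<2\varphi(2^{-n})\to 0$ and $2^{-n}\to 0$, so the cubes $\cP_n(x)$ shrink to the point $x$; consequently the $\sigma$-algebras generated by $\rP_n$ increase to the Borel $\sigma$-algebra, and the martingale convergence theorem applied to the conditional expectations $f_{|n}$ of any $f\in\bL$ yields $f_{|n}(x)\to f(x)$ almost everywhere. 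The same construction, restricted to $i_1\in\bZ_+$, handles $\rP_n^+$ on $(0,\infty)\times\fR^d_+$. The main technical obstacle is really just the recursive construction—specifically the elementary observation that $(a/2,a]$ contains an integer for every $a\ge 1$, which is what allows $\sigma_n$ to stay confined to $[1,2)$ while guaranteeing integer nesting ratios.
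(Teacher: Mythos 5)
Your proof is correct and follows essentially the same inductive strategy as the paper: fix $\sigma_0=1$, then build forward and backward by insisting that the consecutive temporal mesh sizes $L_n=\varphi(2^{-n})\sigma_n$ have integer ratio, while $\sigma_n$ is kept in $[1,2)$. The one genuine point of divergence is in the choice of nesting ratio: the paper always divides or multiplies by a \emph{power of two}, selecting $\ell_{k+1}\in\bZ_+$ with $\varphi(2^{-k})\sigma_k/\varphi(2^{-(k+1)})\in[2^{\ell_{k+1}},2^{\ell_{k+1}+1})$, whereas you allow an arbitrary positive integer $k_n$, justified by the elementary observation that $(a/2,a]$ contains an integer whenever $a\ge1$ (and $[b,2b)$ does whenever $b>1/2$). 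Both choices give the same constant $N_0=2^{d+1}\tilde c$ in Definition \ref{filt defn}(ii), so nothing is lost or gained quantitatively; the dyadic choice is simply aesthetically aligned with the spatial dyadic cubes. A further small difference is that you spell out the verification of Definition \ref{filt defn}(i) — both the growth $\mu(\cP)=L_n 2^{-nd}\to\infty$ as $n\to-\infty$ and the Lebesgue-differentiation limit as $n\to\infty$ — while the paper proves only the parent–child nesting with bounded volume ratio and leaves (i) implicit; your extra detail is welcome. One presentational nit: the backward recursion should explicitly start from $n=0$ (producing $L_{-1}$ from $L_0$), not from $n=-1$, but this is cosmetic and the intent is clear.
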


\begin{proof}
Because of similarity, we only construct the filtration $\rP_n$.
We construct this filtration in inductive ways.
Recall that $\varphi(r)$ is a nonnegative nondecreasing function from $(0,\infty)$ into $(0,\infty)$ so that
\begin{align}
                \label{psi con 1}
\varphi(r) \downarrow 0 \quad \text{as} \quad r \downarrow 0,\quad \varphi(r) \uparrow \infty \quad \text{as} \quad r \uparrow \infty,
\end{align}
and
\begin{align}
                \label{psi con 2}
\sup_{r>0} \frac{\varphi( 2 r)}{\varphi(r)} < \infty.
\end{align}
First, we set
$$
\rP_0 := \{ (i \varphi(1),(i+1)\varphi(1)] \times (i_1,i_1+1] \times \cdots \times (i_d,i_d+1], \quad i\in \bZ_+,i_1,\ldots,i_d \in \bZ \}
$$
and construct $\rP_n$ for $n=1, 2,\ldots$ inductively.
Suppose that $\rP_k$ is given for some $k \in \bZ_+$ and
$$
\rP_k=\{ \big(i\varphi(2^{-k})\sigma_k ,(i+1)\varphi(2^{-  k})\sigma_k \big] \times \fB_{2^{-k}}(i_1,\ldots,i_d), \quad i\in \bZ_+,i_1,\ldots,i_d \in \bZ \},
$$
where $\sigma_k \in [1,2)$ and
$$
\fB_{2^{-k}}(i_1,\ldots,i_d)=(i_12^{-k},(i_1+1)2^{-k}] \times \cdots \times (i_d 2^{-k},(i_d+1)2^{-k}].
$$
If $k=0$, then obviously $\sigma_k = 1$.
Since $\varphi$ is nondecreasing and $\varphi>0$, there exists a $\bZ_+$ so that
$$
\frac{\varphi(2^{-k}) \sigma_k}{\varphi(2^{-(k+1)})} \in [2^{\ell_{k+1}}, 2^{\ell_{k+1}+1}).
$$
We put
$$
\sigma_{k+1} = \frac{\varphi(  2^{-k}) \sigma_k}{\varphi(2^{- (k+1)})2^{\ell_{k+1}}}
$$
and define $\rP_{k+1}$ as the collection of sets
\begin{align*}
\big(i\varphi(2^{-(k+1)})\sigma_{k+1} ,(i+1)\varphi(2^{-(k+1)})\sigma_{k+1} \big] \times \fB_{2^{-(k+1)}}(i_1,\ldots,i_d),
\end{align*}
for all  $i\in \bZ_+,i_1,\ldots,i_d \in \bZ$. Then obviously
$$
\sigma_{k+1} \in [1,2)
$$
and for any $\cP \in \rP_{k+1}$ there exists a unique $\cP' \in \rP_k$ so that
\begin{align}
                \label{regul con 1}
\cP \subset \cP'
\end{align}
and
\begin{align*}
\frac{|\cP'|}{|\cP|}
= \frac{\varphi(2^{-k})\sigma_k}{\varphi(2^{-(k+1)})\sigma_{k+1}} 2^d
= 2^{d}2^{\ell_{k+1}} \leq 2^{d+1}\sup_{r>0} \frac{\varphi( 2 r)}{\varphi(r)} <\infty.
\end{align*}
In order to confirm \eqref{regul con 1}, observe that
if $\ell_{k+1}=0$ then for any $i \in \bZ$
\begin{align*}
\big(i\varphi(2^{-k})\sigma_k ,(i+1)\varphi(2^{-  k})\sigma_k \big] = \big(i\varphi(2^{-(k+1)})\sigma_{k+1} ,(i+1)\varphi(2^{-(k+1)})\sigma_{k+1} \big],
\end{align*}
 and on the other hand if $\ell_{k+1} > 0$ then
\begin{align*}
&\big(i\varphi(2^{-k})\sigma_k ,(i+1)\varphi(2^{-  k})\sigma_k \big]  \\
&= \bigcup_{l=0}^{2^{\ell_{k+1}}-1} \big(i_l \varphi(2^{-(k+1)})\sigma_{k+1} ,( i_l+1)\varphi(2^{-(k+1)})\sigma_{k+1} \big],
\end{align*}
where $i_l = i 2^{\ell_{k+1}}+l$. 

Next we construct $\rP_n$ for $n=-1,-2,\ldots$. Similarly,
suppose that $\rP_k$ is given for some $k \in \{0,-1,-2,\ldots\}$ and
$$
\rP_k=\{ \big(i\varphi(2^{-k})\sigma_k ,(i+1)\varphi(2^{-  k})\sigma_k \big] \times \fB_{2^{-k}}(i_1,\ldots,i_d), \quad i\in \bZ_+,i_1,\ldots,i_d \in \bZ \},
$$
where $\sigma_k \in [1,2)$ and
$$
\fB_{2^{-k}}(i_1,\ldots,i_d)=(i_12^{-k},(i_1+1)2^{-k}] \times \cdots \times (i_d 2^{-k},(i_d+1)2^{-k}].
$$
Since $\varphi$ is nondecreasing, $\varphi >0$, and $\sigma_k \in [1,2)$, there exists a $\ell_{k-1} \in \bN \cup \{0\}$ so that
$$
\frac{\varphi(2^{-k})\sigma_k}{\varphi(2^{-(k-1)})} \in [2^{-\ell_{k-1}}, 2^{-\ell_{k-1}+1}).
$$
We put
$$
\sigma_{k-1} = \frac{2^{\ell_{k-1}} \varphi(  2^{- k})  \sigma_k }{\varphi(2^{-(k-1)})}
$$
and define $\rP_{k-1}$ as the collection of sets
\begin{align*}
\big(i\varphi(2^{-(k-1)})\sigma_{k-1} ,(i+1)\varphi(2^{-(k-1)})\sigma_{k-1} \big] \times \fB_{2^{-(k-1)}}(i_1,\ldots,i_d),
\end{align*}
for all  $i\in \bZ_+,i_1,\ldots,i_d \in \bZ$.
Then obviously
$$
\sigma_{k-1} \in [1,2)
$$
and for any $\cP \in \rP_{k}$ there exists a unique $\cP' \in \rP_{k-1}$ so that
\begin{align}
                \label{reg p 2}
\cP \subset \cP'
\end{align}
and
\begin{align*}
\frac{|\cP'|}{|\cP|}
= \frac{\varphi(2^{-(k-1)})\sigma_{k-1}}{\varphi(2^{-k})\sigma_{k}} 2^d \leq 2^{d+\ell_{k-1}}
= 2^{d+1}\sup_{r>0} \frac{\varphi( 2 r)}{\varphi(r)} <\infty.
\end{align*}
\eqref{reg p 2} is due to the followings :
For any $i\in \bZ_+,i_1,\ldots,i_d \in \bZ$, if $\ell_{k-1}=0$ then
\begin{align*}
&\big(i\varphi(2^{-(k-1)})\sigma_{k-1} ,(i+1)\varphi(2^{-(k-1)})\sigma_{k-1} \big] &=  \big(i_l \varphi(2^{-k})\sigma_{k} ,( i_l+1)\varphi(2^{-k})\sigma_{k} \big] ,
\end{align*}
and on the other hand, unless $\ell_{k-1} = 0$ then
\begin{align*}
&\big(i\varphi(2^{-(k-1)})\sigma_{k-1} ,(i+1)\varphi(2^{-(k-1)})\sigma_{k-1} \big] \\
&= \bigcup_{l=0}^{2^{\ell_{k-1}}-1} \big(i_l \varphi(2^{-k})\sigma_{k} ,( i_l+1)\varphi(2^{-k})\sigma_{k} \big],
\end{align*}
where $i_l = i 2^{\ell_{k-1}} + l $. 

The theorem is proved.
\end{proof}

\vspace{2mm}
\noindent
{\bf Proof of Theorem \ref{fs thm}}
\vspace{2mm}

This is an easy consequence of Theorem \ref{kry fe-st} with the filtration
\begin{align*}
\rP_k
&=\left\{\left(i\varphi(2^{-k}) \sigma_k ,(i+1)\varphi(2^{-k}) \sigma_k \right] \times \fB_{2^{-k}}(i_1,\ldots,i_d) ~ : ~ i\in \bZ_+,i_1,\ldots,i_d \in \bZ \right\} \\
&=: \{ Q_{\varphi,k}(i,i_1,\ldots,i_d) ~ : ~ i\in \bZ_+,i_1,\ldots,i_d \in \bZ \}.
\end{align*}
We only remark that 
for any $Q_{\varphi,k}(i,i_1,\ldots,i_d) \in \cP$, one can find a $Q_c^\varphi(t_0,x_0) \in \bQ_\varphi$ so that
$$
Q_{\varphi,k}(i,i_1,\ldots,i_d)  \subset   Q_c^\varphi(t_0,x_0)
$$
and
$$
|Q_c^\varphi(t_0,x_0)|\leq N(d,\varphi)|Q_{\varphi,k}(i,i_1,\ldots,i_d)| .
$$
The theorem is proved. \qed

\mysection{acknowledgement} 
The authors are sincerely grateful to the anonymous referee for valuable suggestions and comments.

\end{document}